\documentclass[11pt,twoside,reqno]{amsart}

\usepackage{a4wide}
\usepackage[T1]{fontenc}
\usepackage[utf8]{inputenc}
\usepackage{amsmath}
\usepackage{amsfonts}
\usepackage{amssymb}
\usepackage{mathtools}
\usepackage{amsthm}
\usepackage{xcolor}
\usepackage{graphicx}
\usepackage[hidelinks]{hyperref}
\usepackage{standalone}
\usepackage{esint}
\usepackage{comment}
\usepackage[left=2.3cm,top=3cm,right=2.3cm]{geometry} 
\usepackage{bm}
\usepackage{enumerate}
\usepackage{cite}

\newcommand{\R}{\mathbb{R}}

\newcommand{\C}{\mathbb{C}}

\newcommand{\N}{\mathbb{N}}

\newcommand{\bH}{\mathbb{H}}

\renewcommand{\S}{\mathbb{S}}

\newcommand{\cM}{\mathcal{M}}

\newcommand{\eps}{\varepsilon}

\newcommand{\dist}{\operatorname{dist}}

\renewcommand{\emptyset}{\varnothing}
\renewcommand{\epsilon}{\varepsilon}
\renewcommand{\rho}{\varrho}
\renewcommand{\phi}{\varphi}

\newcommand{\bHS}{\mathrm{HS}}
\newcommand{\supp}{\mathrm{supp}}

\newcommand{\id}{\operatorname{id}}

\DeclareMathOperator{\Vol}{Vol}
\DeclareMathOperator{\Tr}{Tr}
\DeclareMathOperator{\vol}{Vol}
\DeclareMathOperator{\InjRad}{InjRad}

\DeclareMathOperator{\PSL}{PSL}

\newcommand{\norm}[1]{\left\lVert#1\right\rVert}

\numberwithin{equation}{section}

\theoremstyle{plain}
\newtheorem{thm}{Theorem}[section]
\newtheorem{theorem}[thm]{Theorem}

\newtheorem{lemma}[thm]{Lemma}

\theoremstyle{definition}

\theoremstyle{remark}
\newtheorem{remark}[thm]{Remark}

\title{Quantum Mixing for Schr\"odinger eigenfunctions in Benjamini-Schramm limit}

\author{Kai Hippi}
\address{Aalto University, Espoo, Finland}
\email{kai.hippi@aalto.fi}

\author{F\'{e}lix Lequen}
\address{Université Sorbonne Paris Nord, Villetaneuse, France}
\email{lequen@math.univ-paris13.fr}

\author{S{\o}ren Mikkelsen}
\address{University of Helsinki, Helsinki, Finland}
\email{soren.mikkelsen@helsinki.fi}

\author{Tuomas Sahlsten}
\address{University of Helsinki, Helsinki, Finland}
\email{tuomas.sahlsten@helsinki.fi}

\author{Henrik Uebersch\"{a}r}
\address{Sorbonne Universit\'e, Paris, France}
\email{henrik.ueberschar@imj-prg.fr}

\thanks{K.H., S.M., and T.S. acknowledge support from the Research Council of Finland's Academy Research Fellowship \emph{``Quantum chaos of large and many body systems''}, grant Nos. 347365, 353738. F.L. also received support from this grant during 2023, from
Tuomas Orponen’s grant from the Research Council of Finland via the project Approximate Incidence Geometry, grant no. 355453 and the joint ANR-SNF project \emph{``Equidistribution in Number Theory''} (FNS no. 10.003.145 and ANR-24-CE93-0016). K.H. is also supported by the Vilho, Yrjö and Kalle Väisälä Foundation. AI tools were used in proof-reading. All original ideas are those of the authors, who independently checked the final manuscript.}

\begin{document}

\begin{abstract}
Let $\{X_n\}_{n\in\N}$ be a  sequence of compact hyperbolic surfaces which is uniformly discrete and Benjamini-Schramm converges to $\mathbb{H}$ and let $\{V_n\}_{n\in\N}$ be a sequence of potentials such that the $L^2$-norm of $V_n$ is $o(1)$ with respect to the volume of $X_n$. We prove quantum mixing for the eigenfunctions of $-\Delta_{X_n}+V_n$ in any sufficiently large spectral window $I$ for bounded observables which are polynomial mixing with respect to the geodesic flow. Examples of such sequences of potentials include point-cloud potentials just below the thermodynamic limit, Hartree potentials for dilute Bose gases in hyperbolic space, and sequences induced by bounded potentials in $L^p(\mathbb{H})$ for some $p>0$. This is the first result of this kind beyond the locally symmetric setting. The proof combines classical geodesic flow mixing on $T^1X_n$, replacing Nevo's ergodic theorem, with the Duhamel formula and recently developed geometric wave-kernel estimates by the first author.
\end{abstract}

\maketitle


\section{Introduction}

\subsection{Background}

Understanding eigenfunctions of quantum Hamiltonians is central in both physics and mathematics, as they determine the time evolution and statistical behaviour of quantum systems. A guiding principle in quantum chaos is that spectral properties of a quantum system reflect the dynamics of the associated classical flow. In particular, ergodic classical dynamics is expected to lead to eigenfunction delocalisation. This is formalised by the quantum ergodicity theorem of Shnirelman, Zelditch, and Colin de Verdi\`ere \cite{shnirelmanErgodicPropertiesEigenfunctions,zelditchUniformDistributionEigenfunctions1987,colindeverdiereErgodiciteFonctionsPropres1985}, which asserts that on manifolds with ergodic geodesic flow, eigenfunctions of the Laplace-Beltrami operator equidistribute in the high-energy limit outside a density-zero subsequence. This principle has motivated a large body of work on eigenfunction delocalisation and quantum chaos; see, e.g. \cite{Anantharaman2008,AnantharamanNonnenmacher2007,Lindenstrauss2006,Soundararajan2010,DyatlovZahl2016,DyatlovJin2021} and the surveys \cite{Anantharaman2022,Dyatlov2022}.

A related regime arises when the underlying space itself grows. For sequences of regular graphs converging locally to trees, quantum ergodicity was established in \cite{anantharamanQuantumErgodicityLarge2015,brooks2015quantumergodicityaveragingoperators}, building on earlier delocalisation results \cite{brooksNonlocalizationEigenfunctionsLarge2013}. These results were subsequently extended to non-regular graphs, Cayley graphs, and Schr\"odinger operators, including Anderson-type models under assumptions on the limiting spectral measure \cite{AnantharamanSabriAnnals,anantharamanQuantumErgodicityAnderson2017,AnantharamanSabri2019,MageeThomasZhao2023,bordenave2026quantummixinglargeschreier}. Related developments in random matrix theory and random graphs include local laws and eigenvector delocalisation \cite{erdosLocalSemicircleLaw2009,erdosSpectralStatisticsErdosRenyi2013,bauerschmidtLocalSemicircleLaw2017,bauerschmidtLocalKestenMcKay2019}.

In the continuous setting, the natural analogue is given by sequences of hyperbolic surfaces whose genus tends to infinity and which converge locally to the hyperbolic plane in the Benjamini-Schramm sense. Quantum unique ergodicity for arithmetic families in this setting was considered by Nelson \cite{Nelson_2011} and by Nelson, Pitale, and Saha \cite{NelsonPitaleSaha} in the level aspect. Quantum ergodicity in the large-genus regime was established by Le Masson and the fourth author \cite{lemassonQuantumErgodicityBenjaminiSchramm2017}, who proved equidistribution in fixed spectral windows under natural geometric assumptions such as a uniform injectivity radius and spectral gap. Many related developments have followed; see, e.g. \cite{lemassonQuantumErgodicityEisenstein2024,abertEigenfunctionsRandomWaves2018,AnantharamanSaha,peterson2023quantumergodicitybruhattitsbuilding,brumley2020quantumergodicitycompactquotients,brumley2026quantumergodicitybenjaminischrammlimit,monkGeometrySpectrumTypical,monkBenjaminiSchrammConvergence2022,gilmoreShortGeodesicLoops2021,KimTao2026,Gavelli2026}. There has also been substantial recent progress in transferring ideas from tree-like graph models to random hyperbolic surfaces, particularly in the study of optimal spectral gaps \cite{AnantharamanMonk2023,AnantharamanMonk2024a,AnantharamanMonk2024b,AnantharamanMonk2025,HideMaceraThomas2025a,HideMaceraThomas2025b,MageePuderVanHandel2025,HideMagee,BordenaveCollins,Chen_2026,KimTao2026}.

Beyond quantum ergodicity, one can ask for quantum mixing, introduced by Zelditch \cite{Zel90,Zel96a} in the high-energy setting and adapted here to the large-volume limit. An analogue for Wigner matrices was proved by Cipolloni, Erd\H{o}s, and Yau \cite{CES21} in the form of eigenstate thermalisation. In the geometric setting, the first author proved quantum mixing for large hyperbolic surfaces, extending the earlier quantum ergodicity results \cite{lemassonQuantumErgodicityBenjaminiSchramm2017,lemassonQuantumErgodicityEisenstein2024}; more recently, Bordenave, Letrouit, and Sabri established quantum mixing for sequences of Schreier graphs \cite{bordenave2026quantummixinglargeschreier} using strong convergence \cite{BordenaveCollins}.

Most results in the continuous setting, however, concern the Laplacian, corresponding to free motion. Introducing a potential leads to the Schr\"odinger operator
$$
H=-\Delta+V,
$$
which destroys the symmetries underlying spherical analysis and many of the existing large-volume arguments. Our purpose is to show that quantum ergodicity and quantum mixing nevertheless persist for a broad class of deterministic potentials on large hyperbolic surfaces. More precisely, we consider Schr\"odinger operators
$$
H_{V_n}=-\Delta_{X_n}+V_n
$$
on sequences of compact hyperbolic surfaces $X_n$. The key condition is that the potentials are uniformly bounded and satisfy
$$
\norm{V_n}_{L^2(X_n)}^2=o(\Vol(X_n)), \qquad n\to\infty.
$$
Thus the perturbation is small only in an averaged sense over the growing surface. In particular, $V_n$ need not converge to zero locally, so this is not merely a weak-coupling regime. The condition allows order-one perturbations supported on a vanishing proportion of the surface and arises naturally in several rather different settings. It covers:
\begin{enumerate}
    \item sparse point-cloud potentials consisting of uniformly bounded scatterers centred at $o(g_n)$ well-separated points;
    \item effective Hartree potentials arising in the mean-field approximation of dilute Bose gases on hyperbolic surfaces;
    \item potentials induced by fixing $V\in L^p(\bH)\cap L^\infty(\bH)$ for some $p>0$ and restricting $V$ to fundamental domains before extending periodically.
\end{enumerate}
These examples are discussed in detail in Section~\ref{SEC:potential}.

\subsection{The main result}

We now describe the assumptions and state the main theorem. Let $\{X_n\}_{n\in\N}$ be a sequence of compact connected hyperbolic surfaces. We assume the following two geometric conditions as in \cite{lemassonQuantumErgodicityBenjaminiSchramm2017,Hippi,lemassonQuantumErgodicityEisenstein2024,abertEigenfunctionsRandomWaves2018,AnantharamanSaha,peterson2023quantumergodicitybruhattitsbuilding,brumley2020quantumergodicitycompactquotients,brumley2026quantumergodicitybenjaminischrammlimit,Gavelli2026},:
\begin{enumerate}[label={ABC}]
    \item[\textbf{(BSC)}:] Benjamini-Schramm convergence: $\{X_n\}_{n\in\N}$ converges to $\mathbb H$ in the Benjamini-Schramm topology. That is, for every $R>0$,
    \[
        \frac{|\{x\in X_n:\InjRad_{X_n}(x)\leq R\}|}{|X_n|}
        \longrightarrow 0,
        \qquad n\to\infty.
    \]

    \item[\textbf{(UND)}:] Uniform discreteness: the injectivity radii $\InjRad_{X_n}$ have a uniform positive lower bound.
\end{enumerate}

Now, we consider the Schr\"odinger operators on the surfaces $X_n$
\[
    H_{V_n}=-\Delta_{X_n}+V_n,
\]
where $-\Delta_{X_n}$ is the positive Laplace-Beltrami operator the potentials $V_n : X_n \to \mathbb R$ satisfy the following condition:
\begin{enumerate}[label={ABC}]
    \item[\textbf{(POT)}:] There exist $C_{\min},C_{\max}\in\R$ such that
    \[
        C_{\min}\leq V_n\leq C_{\max}
        \qquad\text{for all }n\in\N,
    \]
    and
    \[
        \norm{V_n}_{L^2(X_n)}^2=o(\Vol(X_n)).
    \]
\end{enumerate}

By Gauss-Bonnet, $\Vol(X_n)=4\pi(g_n-1)$, so the last condition is equivalent to $\norm{V_n}_{L^2(X_n)}^2=o(g_n)$. The examples above show that \textbf{(POT)} allows genuinely non-vanishing potentials and in particular goes beyond a weak-coupling regime. For instance, a point cloud may contain an increasing number of scatterers as long as their density tends to zero, while a potential obtained from a fixed $V\in L^p(\bH)\cap L^\infty(\bH)$ need not vanish locally in the limit. See Section \ref{SEC:potential} for more details on this.

The final assumption is a quantitative (classical) mixing assumption on the physical space observables with which we test quantum ergodicity and quantum mixing. Let $a_n$ be a sequence of $L^2$ integrable observables on $X_n$ with $\sup_n\|a_n\|_\infty<\infty$. We say that they satisfy the following uniform polynomial mixing estimate:
\begin{enumerate}[label={ABC}]
    \item[\textbf{(MIX)}:] There exist $C>0$ and $\kappa>1$ such that, for every $n\in\N$,
    \[
    \Big|
        \langle a_n\circ\phi_t,a_n\rangle
        -
        \Big(\frac{1}{\Vol(X_n)}\int_{X_n}a_n\Big)^2
    \Big|
    \leq
    C(t+1)^{-\kappa}\norm{a_n}_{L^2(X_n)}^2,
    \qquad t\geq0,
    \]
    where $\phi_t$ is the geodesic flow on $T^1X_n$ and $a_n(x,\theta)=a_n(x)$ for $(x,\theta)\in T^1X_n$.
\end{enumerate}

Thus, instead of imposing a uniform spectral gap on the surfaces, as in earlier approaches based on Nevo's ergodic theorem and the Kunze-Stein phenomenon, we assume directly the quantitative correlation decay required by the proof. By the work of Ratner \cite{Ratner}, a uniform spectral gap $\lambda_1(X_n)\geq c_0>0$ implies \textbf{(MIX)} for every uniformly bounded sequence $(a_n)$, in fact with exponential decay; see Theorem~\ref{Thm: Exponential mixing}. Our argument only requires polynomial decay with exponent $\kappa>1$. This formulation is analogous to the mixing hypothesis in the semiclassical quantum mixing results of Zelditch \cite{Zel96a} and is also closer in spirit to the recent work of Bordenave, Letrouit, and Sabri \cite{bordenave2026quantummixinglargeschreier} on Schreier graphs.

Under these conditions, we prove:

\begin{theorem}\label{thm:main}
    Let $ \{ X_{n} \}_{n\in\N} $ be a sequence of compact connected hyperbolic surfaces that satisfies {\normalfont\textbf{(BSC)}}, {\normalfont\textbf{(UND)}} and let $\{ V_{n} \}_{n\in\N} $ be a sequence of potentials that satisfies {\normalfont\textbf{(POT)}} with the two numbers $C_{\min}$ and $C_{\max}$. Consider for each $n\in\N$ the Schr\"odinger operator  $H_{V_n}^{(n)}= -\Delta_{X_n}+V_n$ with eigenvalues $\lambda_1^{(n)}\leq \lambda_2^{(n)}\leq \lambda_3^{(n)} \leq \lambda_4^{(n)} \cdots $ and eigenvectors $\{\psi_j^{(n)}\}_{j\in\N}$. For all $n,j\in\N$ set $\rho_{j}^{(n)} = \sqrt{\lambda_j^{(n)}-\frac{1}{4}}$. Fix $\frac{1}{4}< a' < a < b < b'$  such that $b-C_{\max}>a  -\min(0, C_{\min})$ and a sequence of spectral windows $I_n$ such that $[a,b] \subset I_n \subset [a',b']$. Then for any uniformly bounded sequence of measurable functions $\{a_n\}_{n\in\N}$ satisfying {\normalfont\textbf{(MIX)}} we have the following three properties:
    \begin{enumerate}
        \item[\emph{(1)}]
        \begin{equation*}
        \lim_{n\rightarrow \infty}\frac{1}{\mathcal{N}\big(H_{V_n}^{(n)},I_n\big)} \sum_{j:\lambda_j^{(n)}\in I_n} \Big| \langle a_n \psi_j^{(n)} , \psi_j^{(n)}\rangle_{L^2(X_n)} - \frac{1}{\Vol(X_n)} \int_{X_n} a_n(x)  \, dx \Big|^2 =0,
        \end{equation*}
        where $dx$ is the volume form on $X_n$.

        \item[\emph{(2)}] For every $\varepsilon>0$ there exists $\delta(\varepsilon)>0$ such that
        \begin{equation*}
        \limsup_{n\rightarrow \infty}\frac{1}{\mathcal{N}\big(H_{V_n}^{(n)},I_n\big)}  \sum_{\substack{j\neq k: \lambda_j^{(n)},\lambda_k^{(n)}\in I_n \\ |\rho_{j}^{(n)}-\rho_{k}^{(n)}| < \delta(\varepsilon)}} \Big| \langle a_n \psi_j^{(n)} , \psi_k^{(n)}\rangle_{L^2(X_n)}  \Big|^2 <\varepsilon.
        \end{equation*}

        \item[\emph{(3)}] For every $\varepsilon>0$ there exists $\delta(\varepsilon)>0$ such that for all $\tau\in\R$
        \begin{equation*}
        \limsup_{n\rightarrow \infty}\frac{1}{\mathcal{N}\big(H_{V_n}^{(n)},I_n\big)}  \sum_{\substack{j\neq k: \lambda_j^{(n)},\lambda_k^{(n)}\in I_n  \\ |\rho_{j}^{(n)}-\rho_{k}^{(n)}-\tau| < \delta(\varepsilon)}} \Big| \langle a_n \psi_j^{(n)} , \psi_k^{(n)}\rangle_{L^2(X_n)}  \Big|^2 <\varepsilon.
        \end{equation*}
    \end{enumerate}
    In all three cases $\mathcal{N}(H_{V_n}^{(n)},I_n)$ is the number of eigenvalues of $H_{V_n}^{(n)}$ in $I_n$ counted with multiplicity.
\end{theorem}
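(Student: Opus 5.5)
Reduce the three statements to a single operator estimate and bound it by comparing the perturbed wave propagator with the free one via the Duhamel formula. Replace $a_n$ by $a_n-\frac{1}{\Vol(X_n)}\int a_n$, so we may assume the mean is zero; (MIX) is preserved. For (2) and (3), with $\tau=0$ giving (2), choose an even test function $h$ whose Fourier transform $\hat h$ has compact support in $[-T,T]$, with $h\ge0$ and $h\geq 1$ on $[-\delta,\delta]$. Then
\[
\sum_{j,k}h(\rho_j-\rho_k-\tau)\,|\langle a_n\psi_j,\psi_k\rangle|^2=\frac{1}{2\pi}\int\hat h(t)e^{-it\tau}\,\Tr\big(\chi_{I_n}(H)\,a_n\,e^{itP}\,\chi_{I_n}(H)\,a_n\,e^{-itP}\big)\,dt,
\]
where $P=\sqrt{H_{V_n}-1/4}$ on the relevant spectral subspace. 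The indicator $\chi_{I_n}$ is smoothed to a function $\chi$ equal to one on $I_n$; this only enlarges a nonnegative sum. For (1), the same device applies with the time average $\frac1T\int_0^T e^{itP}a_ne^{-itP}\,dt$, as in the standard Zelditch variance argument. In both cases it suffices to show
\[
\limsup_n \frac{1}{\Vol(X_n)}\Big\|\chi(H)\,\tfrac1T\!\int_0^T e^{itP}a_ne^{-itP}dt\,\chi(H)\Big\|_{\bHS}^2\lesssim T^{-1}\,\limsup_n\frac{\|a_n\|_2^2}{\Vol(X_n)}+o_{T\to\infty}(1).
\]
Weyl-law-type lower bounds $\mathcal N(H_{V_n},I_n)\gtrsim \Vol(X_n)$ then finish the argument. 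These bounds come from the free Laplacian's counting on $[a,b]$ under (BSC), and the condition $b-C_{\max}>a-\min(0,C_{\min})$ guarantees that the window still contains a positive proportion of the spectrum after the shift by $V_n$ via min--max.

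The first key step is the Duhamel comparison. Write $\chi(H_{V_n})e^{itP_V}$ as $\chi(H_0)e^{itP_0}$ plus an error. The cleanest route is to express $\chi(H)e^{itP}$ as $f_t(H)$ for a smooth compactly supported $f_t$, write $f_t(H)$ through the wave group $\cos(s\sqrt{H-1/4})$ by Fourier inversion, and use Duhamel for the wave equation:
\[
\cos(sP_V)-\cos(sP_0)=-\int_0^s\frac{\sin((s-r)P_V)}{P_V}\,V_n\,\cos(rP_0)\,dr.
\]
The Hilbert--Schmidt norm of each error term is bounded by $\|V_n\|_{L^2}$ times the sup of the free wave-kernel $L^2$ norms in one variable. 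These sups are uniformly bounded by (UND) and the geometric wave-kernel estimates of the first author, with growth at most $e^{Cs}$ in $s\le T$. Since $\|V_n\|_2^2=o(\Vol X_n)$ and $\|a_n\|_\infty$ is bounded, the perturbation contributes $o(\Vol X_n)\cdot C(T)$, which vanishes after dividing by $\Vol(X_n)$ for each fixed $T$.

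We are thus reduced to the Laplacian. There, lift the kernel of $\chi(H_0)e^{itP_0}$ to $\bH$, and use (BSC) to discard the thin part, where $\InjRad\le R(T)$; its contribution is $o(\Vol X_n)$ by the pointwise kernel bounds. On the thick part, the Hilbert--Schmidt norm of the averaged conjugated operator becomes, via the semiclassical/Egorov-type identity on $\bH$ (the spherical transform of the wave propagator), an integral of $a_n\otimes a_n$ against a kernel approximating $\frac1{T^2}\int_0^T\!\int_0^T\langle a_n\circ\phi_{t-s},a_n\rangle$ on $T^1X_n$. This is where the geodesic-flow correlation replaces Nevo's theorem. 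By (MIX) with $\kappa>1$, $\int_0^\infty(1+t)^{-\kappa}dt<\infty$, which gives the $T^{-1}\|a_n\|_2^2$ bound. For (3), the extra phase $e^{-it\tau}$ is harmless because it has modulus one and the same integrable decay applies uniformly in $\tau$, yielding $\delta(\varepsilon)$ independent of $\tau$.

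The main obstacle is the Duhamel step: controlling uniformly in $n$ the Hilbert--Schmidt norm of the perturbative term. It needs $L^2$ bounds for $\frac{\sin(sP_V)}{P_V}$ that are uniform even though $H_{V_n}$ may have spectrum below $1/4$. I would handle this by applying the functional calculus in $H_V$ only through a smooth $\chi$ supported near $[a',b']$ and bounding $\sin((s-r)P_V)/P_V$ on that range by $e^{C|s|}$ in operator norm, since $V_n\ge C_{\min}$. All the $n$-dependence then sits in the free kernel, where the first author's estimates apply.
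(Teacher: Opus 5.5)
Your overall architecture matches the paper's: a mean-zero reduction, a Duhamel split into a free part controlled by \textbf{(MIX)} and a perturbative part controlled by $\norm{V_n}_{L^2}$, removal of the thin part via \textbf{(BSC)}, and the min--max lower bound on $\mathcal N(H_{V_n},I_n)$. However, your choice of the half-wave group $e^{itP}$, with smooth cutoffs $\chi(H)$ inside the Hilbert--Schmidt norm, breaks the free-term step.

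Since $e^{itP_0}a_ne^{-itP_0}$ is unitarily equivalent to multiplication by $a_n$, it is not Hilbert--Schmidt. So $\chi(H_0)$ cannot be discarded, and you must work with the kernel of $\chi(H_0)e^{itP_0}$. On $\bH$ this kernel is the inverse spherical transform of the smooth, compactly supported multiplier $g_t(r)=\chi(\tfrac14+r^2)e^{it|r|}$. It is not compactly supported, since $e^{itP}$ has no finite propagation speed. It is not even in $L^1(\bH)$: a radial $L^1$ kernel has a spherical transform holomorphic in $|\operatorname{Im} r|<\tfrac12$, which $g_t$ is not. Hence its $\Gamma_n$-periodisation does not converge absolutely. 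So you cannot lift to $\bH$, show that only $\gamma=\id$ contributes on $X_n(>2T)$, or bound the thin part by $\Vol(X_n(\leq 2T))$ as you propose. There is also no Egorov identity in this fixed-energy regime. The actual computation produces $\langle a_n,a_n\circ\phi_r\rangle$ at every $r\in[|t-t'|,t+t']$ with weight $\sinh(r)F(t,t',r)$, which gives $T^{1-\kappa}$ rather than your $T^{-1}$.

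The perturbative step also fails as written, though it is not really the main obstacle. Your bound $\norm{BVC}_{\bHS}\leq\norm{B}\,\norm{V_n}_{L^2}\sup_x\norm{C(x,\cdot)}_{L^2}$ needs $C(x,\cdot)\in L^2$. But the kernel of $\cos(rP_0)$ is only a distribution, and even the Abel kernel of $\sin(rP_0)/P_0$ satisfies $\int_{\bH}A(r,d(x,y))^2\,dy=\infty$ (logarithmic divergence at the light cone). Moreover, Fourier inversion of a compactly supported multiplier forces $s$ to range over all of $\R$. Meanwhile $\cos(sP_0)$ grows like $\cosh(s/2)$ on the constant mode, and $\hat g$ cannot decay exponentially; a cutoff on the $H_V$ side alone does not cure this. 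This step can be repaired, for instance via Helffer--Sj\"ostrand with cutoffs on both sides and a uniform local Weyl law from \textbf{(UND)}.

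The missing idea is the paper's choice of the sine propagator $P_V(t)=h(t,H_V)$, with $\Pi_I(H_V)$ kept only on the outside. Its free kernel is the explicit positive Abel kernel supported in $d<t$, and products such as $P_0(t)aP_0(t')$ and $P_0(t)aP_0(t')V$ are already Hilbert--Schmidt. This has three consequences:
\begin{enumerate}
\item the projections are removed by $\norm{\Pi_I(H_V)P_V(t)}\leq(\min I-\tfrac14)^{-1/2}$;
\item the $\Gamma$-sums unfold on the thick part;
\item pairing two Abel kernels gives the $\norm{V}_{L^2}$ bound of Lemma~\ref{LE:Potential_restriction_1}.
\end{enumerate}
The price is that matrix elements are recovered through lower bounds on $\frac1T\int_0^T h^2$ and $\frac1T\int_0^T\tilde h_\tau h$, instead of the exact diagonal identity that conjugation gives.
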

\begin{remark}
The three conclusions in Theorem~\ref{thm:main} describe spectrally averaged forms of eigenstate equidistribution, both on and off the diagonal. 
\begin{itemize}
    \item  Part~(1) is quantum ergodicity in the large-volume limit: for a density-one family of eigenfunctions, the expectation of the observable converges to its spatial average. \item 
    Parts~(2) and~(3) describe the corresponding off-diagonal behaviour. Part~(2) treats pairs of eigenfunctions whose eigenvalues become asymptotically close, while part~(3) is the quantum mixing statement, controlling off-diagonal matrix elements between eigenfunctions whose spectral separation converges to a prescribed frequency $\tau\in\mathbb R$. In particular, part~(2) is the case $\tau=0$ of part~(3). This is how quantum mixing was written in case of semiclassical limits \cite{Zel96a} as analogues of (1)-(2) characterise classical ergodicity of the geodesic flow, while properties (1)-(3) characterise weak mixing.
\end{itemize}

This diagonal/off-diagonal structure is reminiscent of the eigenstate thermalisation hypothesis \cite{deutschEigenstateThermalizationHypothesis2018} on the behaviour matrix elements of observables in thermodynamic limit of many-body systems near a given energy level. Our setting is different, since we consider a spectrally averaged one-particle large-volume limit in a given energy window rather than a many-body thermodynamic limit.
\end{remark}

The main novelty is that these conclusions persist for Schr\"odinger operators with non-trivial potentials. Previous large-volume quantum ergodicity and quantum mixing results on locally symmetric spaces \cite{lemassonQuantumErgodicityBenjaminiSchramm2017,abertEigenfunctionsRandomWaves2018,brumley2026quantumergodicitybenjaminischrammlimit,brumley2020quantumergodicitycompactquotients,peterson2023quantumergodicitybruhattitsbuilding} exploit, in different ways, the symmetries of the underlying Laplace or averaging operators. A general potential destroys these symmetries, so the corresponding spherical or convolution-based arguments are no longer available. 

Our approach instead uses finite-time wave propagation and the Duhamel formula to separate the relevant quantity into a free contribution and a perturbative contribution generated by the potential. The free part is reduced, using the wave-kernel estimates developed in \cite{Hippi}, to the classical correlation decay in \textbf{(MIX)}, while the perturbed part is controlled using the $L^2$-bound in \textbf{(POT)}, Benjamini-Schramm convergence, and geometric estimates for the corresponding integral kernels. This separation explains both the role of mixing and the form of the potential assumption: the condition $\norm{V_n}_{L^2(X_n)}^2=o(g_n)$ makes the perturbative contribution negligible compared with the volume, even though the potential may remain of order one locally.

The theorem applies, in particular, to a fixed spectral window $I_n=I$ for all $n$, which is the setting more commonly considered in \cite{lemassonQuantumErgodicityBenjaminiSchramm2017,lemassonQuantumErgodicityEisenstein2024,abertEigenfunctionsRandomWaves2018,peterson2023quantumergodicitybruhattitsbuilding,brumley2020quantumergodicitycompactquotients,brumley2026quantumergodicitybenjaminischrammlimit,Hippi,Anantharaman2022,bordenave2026quantummixinglargeschreier,AnantharamanSabri2019,AnantharamanSabriAnnals,brooks2015quantumergodicityaveragingoperators}. We also expect that the result should extend to suitable phase-space observables after restricting to a fixed finite-energy band, along the lines of the extension from multiplication operators to more general local observables in \cite{abertEigenfunctionsRandomWaves2018,AnantharamanSaha,Gavelli2026}, provided that an appropriate phase-space analogue of \textbf{(MIX)} holds for their symbols. We do not pursue this extension here.

\subsection{Examples of surfaces and a probabilistic version}

The assumptions on the surfaces are satisfied by natural deterministic and
random families. Conditions \textbf{(BSC)}, \textbf{(UND)}, and
\textbf{(MIX)} hold for arithmetic surfaces by the works of Katz, Schaps, and
Vishne \cite{KatzSchapsVishne2007} and Selberg \cite{Selberg1965}. The first
gives the required injectivity-radius bounds, while Selberg's lower bound
$3/16$ for the spectral gap implies \textbf{(MIX)} by the work of Ratner
\cite{Ratner}.

The assumptions also hold, in a suitable quantitative form, with high
probability for random surfaces of large genus, whose spectral geometry has
seen substantial progress in recent years. We formulate this using the
Weil-Petersson random model, although analogous statements could also be
considered in other models such as random covers \cite{MageeNaudPuder}. For
background on the Weil-Petersson model and its use in spectral geometry, see,
e.g.
\cite{mirzakhaniGrowthWeilPeterssonVolumes2013,Wright2020,
AnantharamanMonk2023,gilmoreShortGeodesicLoops2021,
monkBenjaminiSchrammConvergence2022}.

Our main result can then be rephrased probabilistically as follows.

\begin{theorem}\label{Thm:propalistic_main}
    Let $\{X_g\}_{g\in\N}$ be a sequence of random compact hyperbolic surfaces, where the distribution of $X_g$ is given by the Weil-Peterson distribution on the moduli space $\mathcal{M}_g$ for all $g\in\N$, where $g$ is the genus. Suppose that $\{V_g\}_{g\in\N}$ is a sequence of potentials that satisfies the condition {\normalfont\textbf{(POT)}} with the two numbers $C_{\min}$ and $C_{\max}$. Consider for each $g\in\N$ the Schr\"odinger operator  $H_{V_g}^{(g)}= -\Delta_{X_g}+V_g$ with eigenvalues $\lambda_1^{(g)}\leq \lambda_2^{(g)}\leq \lambda_3^{(g)} \leq \lambda_4^{(g)} \cdots $ and eigenvectors $\{\psi_j^{(g)}\}_{j\in\N}$. For all $g,j\in\N$ set $\rho_{j}^{(g)} = \sqrt{\lambda_j^{(g)}-\frac{1}{4}}$. Fix $\frac{1}{4}< a' < a < b < b'$  such that $b-C_{\max}>a  -\min(0, C_{\min})$ and a sequence of spectral windows $I_g$ such that $[a,b] \subset I_g \subset [a',b']$. Then for any uniformly bounded sequence of measurable functions $\{a_g\}_{g\in\N}$ the properties (1)-(3) of Theorem~\ref{thm:main} hold with high probability as the genus goes to infinity.
\end{theorem}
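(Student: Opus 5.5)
The plan is to deduce Theorem~\ref{Thm:propalistic_main} from Theorem~\ref{thm:main} by showing that Weil--Petersson random surfaces satisfy the geometric hypotheses with probability tending to one. Theorem~\ref{thm:main} is deterministic, so it suffices to produce events $\mathcal{A}_g\subset\mathcal{M}_g$ with $\mathbb{P}_{WP}(\mathcal{A}_g)\to1$ such that every sequence $X_g\in\mathcal{A}_g$ satisfies \textbf{(BSC)}, \textbf{(UND)}, and a spectral gap. A spectral gap gives \textbf{(MIX)} for every uniformly bounded $(a_g)$ via Theorem~\ref{Thm: Exponential mixing}. This explains why no mixing hypothesis on $a_g$ appears in the statement.

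For the inputs I will use three known results.
\begin{enumerate}
\item \textbf{Spectral gap.} By Wu--Xue and Lipnowski--Wright, and more recently Anantharaman--Monk \cite{AnantharamanMonk2023}, $\lambda_1(X_g)\ge c_0$ for some fixed $c_0>0$ with probability $1-o(1)$. Any fixed $c_0<3/16$ (or $<2/9$) suffices.
\item \textbf{Benjamini--Schramm convergence.} Monk \cite{monkBenjaminiSchrammConvergence2022}, building on Mirzakhani \cite{mirzakhaniGrowthWeilPeterssonVolumes2013}, shows that with probability $1-O(g^{-\alpha})$ the proportion of $X_g$ with $\InjRad\le R$ is $O(e^{cR}g^{-\beta})$, uniformly for $R$ up to a multiple of $\log g$. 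This gives \textbf{(BSC)} for each fixed $R$ on a high-probability event. To handle all $R$ at once, I intersect over $R\in\N$ with $R\le c'\log g$.
\item \textbf{Uniform discreteness.} This is the delicate point. Mirzakhani's estimate gives $\mathbb{P}(\mathrm{sys}(X_g)<\epsilon)\asymp\epsilon^2$ uniformly in $g$. So for fixed $\epsilon$, the event $\mathrm{sys}\ge\epsilon$ has probability about $1-c\epsilon^2$, not tending to one.
\end{enumerate}
I therefore interpret ``with high probability'' in the standard way: for every $\eta>0$ there exist $\epsilon=\epsilon(\eta)$ and $g_0$ such that, for $g\ge g_0$, the event $\mathcal{A}_g^\eta=\{\mathrm{sys}\ge\epsilon,\ \lambda_1\ge c_0,\ \text{BS-estimate holds}\}$ has probability at least $1-\eta$.

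The transfer to the random setting then goes as follows. The conclusions (1)--(3) are limit statements, so I phrase them quantitatively. For each $\eta$, the constants in Theorem~\ref{thm:main} depend only on the geometric parameters $\epsilon$, $c_0$ and the rate in \textbf{(BSC)}, together with $C_{\min}$, $C_{\max}$ and the $o(1)$ rate in \textbf{(POT)}. A contradiction argument makes this uniform. Suppose the quantity in (1) stayed above some $\theta>0$ on events of probability at least $\eta'$ along a subsequence. Then one could select deterministic surfaces $X_g\in\mathcal{A}_g^{\eta}$ violating (1). But these selected surfaces form a sequence satisfying \textbf{(BSC)}, \textbf{(UND)} with constant $\epsilon$, and \textbf{(MIX)}, which contradicts Theorem~\ref{thm:main}. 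This requires $\eta<\eta'$, which we may choose. The same argument applies to (2) and (3), with $\delta(\varepsilon)$ chosen uniformly over the class. This uniformity holds because $\delta$ is produced in the proof of Theorem~\ref{thm:main} from the \textbf{(MIX)} constants $C,\kappa$, which depend only on $c_0$.

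The main obstacle is item 3, \textbf{(UND)}: it fails with probability bounded below, so the result must be stated as probability at least $1-\eta$ for arbitrary $\eta$ rather than literally $1-o(1)$. Alternatively, one could remove short geodesics by checking that the proof of Theorem~\ref{thm:main} only needs injectivity bounds outside a set of small volume. I would try the $\eta$-formulation first. A secondary point is making the subsequence selection measurable, but this is routine.
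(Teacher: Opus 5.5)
Your argument is correct, but it is organised differently from the paper's. The paper never uses Theorem~\ref{thm:main} as a black box. It fixes one event of probability $1-o(1)$ (Theorem~\ref{thm:properties_surface_large_genus}) on which:
\begin{itemize}
\item $\InjRad_{X_g}\ge g^{-1/24}(\log g)^{9/16}$;
\item the set $\{\InjRad<\frac16\log g\}$ has relative volume $O(g^{-1/4})$;
\item $\lambda_1\ge\frac14-\eta$;
\item a quantitative Weyl law holds.
\end{itemize}
It feeds these directly into the Hilbert--Schmidt bound of Lemma~\ref{LE:Main_HS_est}, using \cite[Lemma~6.6]{Hippi} in place of Lemma~\ref{LE:int_est_after_exp_mix}. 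The time scale $T=T_g\to\infty$ is chosen slowly in terms of $\log g$ and $\norm{V_g}_{L^2}^2/\Vol(X_g)$. This works because the injectivity radius enters only through the lattice-counting factor $r^{-2}$ in front of $\Vol(X(\le 2T))$. A systole bound shrinking like $g^{-1/24}$ is therefore absorbed by the $g^{-1/4}$ thin part when $T\le c\log g$ with $c$ small. That is how the paper gets around the obstruction you correctly identify from Mirzakhani's $\epsilon^2$ law, and it yields an explicit $\varepsilon_g\to0$.

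Your compactness transfer keeps a fixed systole bound on events of probability at least $1-\eta$ and needs no re-derivation of estimates. It only needs your (correct) observation that $\delta(\varepsilon)$ and the other constants in the proof of Theorem~\ref{thm:main} depend only on:
\begin{itemize}
\item the \textbf{(MIX)} constants, hence on the spectral gap;
\item $I$ and $I'$;
\item $\sup_g\norm{a_g}_{L^\infty}$;
\item the limiting Weyl density.
\end{itemize}
The price is that it gives no rate.

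You undersell what you prove, though. Write $Q_g(X)$ for the quantity in (1) on a surface $X$. Your argument actually gives a deterministic statement: for every $\theta,\eta>0$ there is $g_0$ with $Q_g(X)\le\theta$ for all $X\in\mathcal{A}_g^\eta$ and $g\ge g_0$. Otherwise a selected sequence of counterexamples contradicts Theorem~\ref{thm:main}. This also makes the selection step purely deterministic, so no measurable selection or positive-probability intersection is needed. Hence $\mathbb{P}(Q_g>\theta)\le\eta$ eventually, i.e.\ $\mathbb{P}(Q_g>\theta)\to0$ for every $\theta>0$. A diagonal choice $\theta_g\to0$ then gives $\mathbb{P}(Q_g\le\theta_g)\to1$, which is literally the $1-o(1)$ statement of the theorem. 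The failure of \textbf{(UND)} with probability bounded below affects only your auxiliary events, not the conclusion, so no weakening of ``with high probability'' is needed.
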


A difference between the deterministic and random settings is that
\textbf{(BSC)}, \textbf{(UND)}, and \textbf{(MIX)} do not hold uniformly for
every surface. Instead, the geometric assumptions are replaced by
quantitative bounds in terms of the genus which hold with high probability,
while \textbf{(MIX)} holds with high probability for all observables in
$L^2(X_g)$. More precisely, for all sufficiently large $g$ there exists a set
$\mathcal A_g\subset\mathcal M_g$ such that
\[
    \mathcal P_g^{\mathrm{WP}}(\mathcal A_g)\longrightarrow1
    \qquad\text{as }g\to\infty,
\]
where $\mathcal P_g^{\mathrm{WP}}$ denotes Weil-Petersson probability
measure, and for surfaces sampled from $\mathcal A_g$ the quantitative
versions of the assumptions required in the proof hold. We give a short
overview of the proof of Theorem~\ref{Thm:propalistic_main} in
Section~\ref{SEC:outline_probabilistic_thm}.

\subsection{Comparison with graph methods}

Our results are closely related to quantum ergodicity and quantum mixing for Schr\"odinger operators on large graphs. In the works of Anantharaman and Sabri \cite{AnantharamanSabriAnnals,anantharamanQuantumErgodicityAnderson2017}, quantum ergodicity is established for tree-like graphs under spectral assumptions on the limiting resolvent, including the condition \textbf{(Green)} in \cite{AnantharamanSabriAnnals} and the Anderson model on trees in \cite{anantharamanQuantumErgodicityAnderson2017}. More recently, Bordenave, Letrouit, and Sabri \cite{bordenave2026quantummixinglargeschreier} proved quantum mixing for sequences of Schreier graphs using strong convergence \cite{BordenaveCollins}. See also \cite{MageeThomasZhao2023} for quantum unique ergodicity on Cayley graphs of quasirandom groups.

A major difference is that the graph arguments can exploit a detailed spectral theory of the limiting operator on the universal covering tree. For the Anderson model this goes back to Klein \cite{Klein1998}, whose proof of absolute continuity uses the recursive structure of Green's functions on trees. Such a theory can then be used to identify spectral regimes in which extended states, and consequently quantum ergodicity, should occur. No comparable theory is currently available for general Schr\"odinger operators on the hyperbolic plane. For example, if $V$ is a periodic potential obtained by placing $\varepsilon$-bumps along an orbit $\Gamma z_0$, then even basic spectral questions for $-\Delta_{\mathbb H}+V$, such as absolute continuity of the spectrum in natural regimes, appear to be largely open. Our result therefore takes a different point of view: instead of imposing spectral assumptions on a limiting Schr\"odinger operator, we give a geometric and perturbative criterion ensuring quantum ergodicity and quantum mixing directly on the finite-volume surfaces. In this sense, the condition \textbf{(POT)} provides a sufficient assumption for delocalisation which does not require any prior understanding of the spectral type of a limiting operator.

The methods show this difference. In \cite{bordenave2026quantummixinglargeschreier}, the quantum variance on a macroscopic spectral window is reduced to narrow windows and expressed through the imaginary part of the resolvent. Our argument instead uses the hyperbolic sine propagator
\[
P_{V_n}(t)=\frac{\sin\big(t\sqrt{H_{V_n}-\tfrac14}\big)}{\sqrt{H_{V_n}-\tfrac14}},
\]
whose Duhamel formula separates the free dynamics from the perturbation. The free contribution is controlled by classical mixing, while the contribution of the potential is estimated directly through its averaged $L^2$ mass. Formally, the propagator and resolvent approaches are related through the Laplace transform, with the time scale $T$ in our argument playing a role analogous to $1/\eta$ for a resolvent regularised at scale $\eta$. It would be interesting to understand whether this analogy can be made quantitative, and whether resolvent methods on the hyperbolic plane could lead to results beyond the perturbative regime considered here.

\subsection{Further questions} The comparison above also points to a natural question left open by Theorem~\ref{thm:main}: how far can the condition \textbf{(POT)} be relaxed? Our result allows potentials which remain of order one locally, but requires their $L^2$ mass to be negligible compared with the volume. At the opposite extreme, sufficiently strong or sufficiently disordered potentials are expected to produce localisation, at least in regimes where an appropriate infinite-volume operator develops pure point spectrum. 

This connects with the broader problem of Anderson localisation and delocalisation (see e.g. \cite{Klein1998,AizenmanWarzel2015,ErdosSalmhoferYau2007,Ueberschar2023,RudnickUeberschar2012,HezariRiviere2017,KurlbergUeberschar2017,KurlbergUeberschar2014} for the Euclidean setting). Between these two extremes one may expect a transition from delocalised to localised eigenstates, possibly through intermediate regimes (see e.g. see, e.g. \cite{Ueberschar2025,KeatingUeberschar2022,ueberschaer2023multifractalityintermediatequantumsystems} again in the Euclidean setting). Establishing such a transition for hyperbolic surfaces would require substantially new input, since at present even the spectral theory of natural infinite-volume hyperbolic Schr\"odinger operators is much less developed than its tree analogue. 

The geometric assumptions themselves should also play an essential role. Examples and counter-examples in the Euclidean setting include e.g. \cite{Hippi, HippiMikkelsen2026} and \cite{McKenzie2022} for graphs. It would be interesting to construct analogous examples directly on negatively curved or degenerating surfaces and to determine the sharp geometric assumptions under which large-volume quantum ergodicity and quantum mixing persist.

\subsection{Outline of the paper}

In Section~\ref{SEC:potential}, we give examples of potentials satisfying
\textbf{(POT)}, including point-cloud, induced $L^p$, and Hartree-type
potentials. In Section~\ref{SEC:pre}, we collect the required geometric and
mixing estimates. Section~\ref{SEC:Spectral_theory_schr} contains the spectral
theory and wave propagators used in the proof, while
Section~\ref{sec:Hilbert_Schmidt_norm_bounds} establishes the required
Hilbert-Schmidt bounds. The proof of Theorem~\ref{thm:main} is given in
Section~\ref{SEC:proof_main_thm}, and the probabilistic version,
Theorem~\ref{Thm:propalistic_main}, is treated in
Section~\ref{SEC:outline_probabilistic_thm}. 

\subsection*{Acknowledgements} We thank Farrell Brumley, L\'aszl\'o Erd\H{o}s, Etienne Le Masson, Cyril Letrouit, Jon Keating, Carsten Peterson, Mostafa Sabri and Joe Thomas for useful discussions and comments during the preparation of this manuscript.

\section{Examples of potentials}
\label{SEC:potential}

We now give several classes of potentials satisfying \textbf{(POT)} and hence
covered by Theorems~\ref{thm:main} and~\ref{Thm:propalistic_main}. Throughout,
let $\{X_n\}_{n\in\N}$ be a sequence of compact connected hyperbolic surfaces
satisfying \textbf{(BSC)}, \textbf{(UND)}, and \textbf{(MIX)}, and write
$X_n=\Gamma_n\setminus\mathbb H$ with fundamental domain $D_n$.

\subsection{Point-cloud potentials}

We can obtain examples by superposing local scatterers over a sparse point
set. Let $\mathcal X\subset\mathbb H$ be locally finite and assume that for
some $\delta>0$,
\[
    \dist(x_i,x_j)>2\delta
    \qquad
    \text{whenever }x_i,x_j\in\mathcal X,\quad x_i\neq x_j.
\]
The separation guarantees that bounded local scatterers give uniformly
bounded total potentials. We distinguish two regimes.

\begin{enumerate}
    \item
    Suppose
    \[
        \#(\mathcal X\cap D_n)=o(g_n),
    \]
    and let $W\in L^\infty(\mathbb H)$ satisfy
    $\supp(W)\subset B(i,\delta)$. Define
    \[
        V_n(x)
        =
        \sum_{x_j\in\mathcal X\cap D_n}
        W(T_{x_j}(x)),
    \]
    where $T_{x_j}$ maps $x_j$ to $i$. Then $(V_n)$ is uniformly bounded,
    and
    \begin{equation}
        \norm{V_n}_{L^2(X_n)}^2
        \leq
        C
        \sum_{x_j\in\mathcal X\cap D_n}
        \int_{D_n}|W(T_{x_j}(x))|^2\,dx
        \leq
        C\,\#(\mathcal X\cap D_n)\norm{W}_{L^2(\mathbb H)}^2
        =
        o(g_n).
        \label{eq:point-cloud-L2}
    \end{equation}
    Hence \textbf{(POT)} holds.

    \item
    Suppose instead that
    \[
        \#(\mathcal X\cap D_n)=\mathcal O(g_n),
    \]
    and let $W_n$ be uniformly bounded in $L^\infty(\mathbb H)$ with
    \[
        \supp(W_n)\subset B(i,\delta_n),
        \qquad
        \delta_n\to0.
    \]
    Setting
    \[
        V_n(x)
        =
        \sum_{x_j\in\mathcal X\cap D_n}
        W_n(T_{x_j}(x)),
    \]
    gives again a uniformly bounded sequence satisfying \textbf{(POT)}, by
    the same estimate as above together with the shrinking support of $W_n$.
\end{enumerate}

Thus \textbf{(POT)} includes point clouds whose number of scatterers tends to
infinity and can approach the thermodynamic scale, provided either the density
is $o(1)$ or the individual scatterers shrink appropriately.

These examples are related to the low-density, or Boltzmann-Grad, regime in
Euclidean space, where linear Boltzmann dynamics can arise as the macroscopic
transport equation; see, e.g.
\cite{EngErdoslinearboltzmann2005,
mikkelsen2023schrodingerevolutionlowdensityrandom,
BreteauxlinearBoltzmann2014,LinearBoltzmannErdosYau2000}.
In the hyperbolic setting, our result instead gives free transport in the
regime covered here, reflecting the strongly dispersive geometry of
$\mathbb H$.

\subsection{Potentials induced by a limiting potential}
\label{ex:potential}

Alternatively, we can fix a potential $V\in L^p(\mathbb H)\cap L^\infty(\mathbb H)$ for some $p>0$ in the universal cover, and define
$V_n$ by restricting $V$ to $D_n$ and extending $V\boldsymbol{1}_{D_n}$ as a
$\Gamma_n$-periodic function on $\mathbb H$. Then $\{V_n\}_{n\in\N}$ satisfies
\textbf{(POT)}.

Indeed, if $0<p\leq2$, then
\[
    \norm{V_n}_{L^2(X_n)}^2
    \leq
    \norm{V}_{L^\infty(\mathbb H)}^{2-p}
    \norm{V}_{L^p(\mathbb H)}^p,
\]
while for $p>2$, H\"older's inequality gives
\[
    \norm{V_n}_{L^2(X_n)}^2
    \leq
    \Vol(X_n)^{\frac{p-2}{p}}
    \norm{V}_{L^p(\mathbb H)}^2.
\]
Since $g_n\asymp\Vol(X_n)$, both bounds are $o(g_n)$, so \textbf{(POT)} holds.

This is a genuinely non-vanishing example: locally, the potentials $V_n$
converge to the fixed potential $V\neq0$. Conditions such as
$V\in L^p(\mathbb H)$, or stronger assumptions of Kato type, also arise
naturally in the study of Strichartz estimates and dispersive properties for
wave and Schr\"odinger equations with potential; see
\cite{Journe1991Decay,ErdoganGoldbergSchlag2008,goldberg2009strichartz,
beceanu2025dispersiveestimatesschrodingerswave,Chen2018,
AnkerPierfelice2009,AnkerPierfelice2014,AnPiVa1,AnPiVa2,
RodnianskiSchlag2004,Ionescu2000,hassani2011wave}.

We note that if the amplitude is allowed to vanish (i.e. the \textit{weak coupling limit}), the decay assumption on the underlying
potential $V$ can be relaxed. Let $V\in L^\infty(\mathbb H)$ and let
$\varepsilon_n\to0$. Define
\[
    V_n
    =
    \varepsilon_n V\boldsymbol{1}_{D_n},
\]
extended periodically to $\mathbb H$. Then $\{V_n\}_{n\in\N}$ is uniformly bounded and
\[
    \norm{V_n}_{L^2(X_n)}^2
    =
    \varepsilon_n^2
    \int_{D_n}|V(x)|^2\,dx
    \leq
    \varepsilon_n^2
    \norm{V}_{L^\infty(\mathbb H)}^2
    \Vol(X_n)
    =
    o(g_n).
\]
Hence \textbf{(POT)} holds.

This includes, for example, weakly coupled arrays of bumps along an orbit
$\Gamma z_0$ of a Fuchsian group, analogous to weak-coupling regimes studied
for the linear Boltzmann equation in Euclidean space
\cite{BreteauxlinearBoltzmann2014,LinearBoltzmannErdosYau2000}.
In this regime our theorem again yields free macroscopic transport.

\subsection{Dilute many-body Bose gases and effective Hartree potentials}

We finally describe an application arising from dilute Bose gases on large
hyperbolic surfaces. Lemm and Siebert \cite{LemmSiebertBEC2022} considered
the thermodynamic limit of the $N_n$-particle many-body Hamiltonian
\[
    H_{N_n}
    =
    \sum_{i=1}^{N_n}(-\Delta_{x_i})
    +
    \sum_{1\leq i<j\leq N_n}V(d(x_i,x_j)),
\]
acting on
$L^2_{\mathrm{sym}}(X_n^{N_n})$, where the interaction potential $V$ is
supported in $[0,R]$. Under a uniform spectral gap and a suitable diluteness
condition, they prove Bose-Einstein condensation onto the constant mode
\[
    \psi_0
    =
    \Vol(X_n)^{-1/2}\mathbf 1_{X_n}.
\]

At the Hartree level one restricts to product states
\[
    \Psi(x_1,\ldots,x_{N_n})
    =
    \phi_n(x_1)\cdots\phi_n(x_{N_n}),
    \qquad
    \norm{\phi_n}_{L^2(X_n)}=1.
\]
The corresponding Euler-Lagrange equation is
\[
    (-\Delta+U_{\phi_n})\phi_n
    =
    \lambda\phi_n,
\]
where
\[
    U_{\phi_n}(x)
    =
    (N_n-1)
    \int_{X_n}
    V(d(x,y))|\phi_n(y)|^2\,dy.
\]

Motivated by condensation onto the constant mode, we freeze the Hartree
potential at $\phi_n=\psi_0$. This gives
\[
    U_{\psi_0}(x)
    =
    \frac{N_n-1}{\Vol(X_n)}
    \int_{X_n}V(d(x,y))\,dy.
\]
If $\InjRad_{X_n}(x)>R$, then $B(x,R)$ is isometric to a hyperbolic ball, so
the integral above is independent of $x$. Hence
\[
    U_{\psi_0}
    =
    c_n+W_n,
\]
where
\[
    c_n
    :=
    \frac{N_n-1}{\Vol(X_n)}
    \int_{\mathbb H}V(d(i,z))\,dz
\]
and $W_n$ is supported on the $R$-thin part
\[
    \{x\in X_n:\InjRad_{X_n}(x)\leq R\}.
\]
By \textbf{(BSC)} and the uniform boundedness of $W_n$,
\[
    \norm{W_n}_{L^2(X_n)}^2=o(g_n).
\]
Thus $W_n$ satisfies \textbf{(POT)}.

In the thermodynamic regime $N_n\asymp\Vol(X_n)$, the constants $c_n$ remain
of order one. Since adding a constant only shifts the spectrum, the frozen
Hartree operator
\[
    H_n^{\mathrm{fr}}
    =
    -\Delta_{X_n}+U_{\psi_0}
\]
has the same eigenfunctions as
\[
    -\Delta_{X_n}+W_n,
\]
with the spectral window shifted by $c_n$. Therefore
Theorems~\ref{thm:main} and~\ref{Thm:propalistic_main} apply: in the dilute
thermodynamic regime, the excitation modes of the frozen Hartree operator are
quantum mixing in any admissible compact spectral window.

This example also indicates a natural boundary of our method. In a regime
where the condensate profile becomes genuinely non-constant, the Hartree
potential may have macroscopic $L^2$-mass,
\[
    \norm{U_{\phi_n}}_{L^2(X_n)}^2\asymp g_n,
\]
and \textbf{(POT)} no longer applies. Extending quantum ergodicity to such a
regime would require information beyond $L^2$-smallness, possibly through the
spectral theory of limiting Schr\"odinger operators on $\mathbb H$.

\section{Geometric preliminaries}\label{SEC:pre}

\subsection{Hyperbolic surfaces}
We will by $\mathbb{H}$ denote the hyperbolic plane and we will identify $\bH$ with the upper half plane, that is
\begin{equation*}
    \bH = \big\{ z=x+iy \in \C \,| \,y>0 \big\}
\end{equation*}
equipped with the Riemannian metric
\begin{equation*}
    ds^2 = \frac{dx^2+dy^2}{y^2}.
\end{equation*}
For a point $z \in \bH$ and a tangent vector $v \in T_z\bH$, we will also write $\norm{v}_z$ for its norm with this metric.
We will write $d(z,z')$ for the distance between two points $z,z'\in \bH$. Moreover, the hyperbolic volume is given by
\begin{equation*}
    dz = \frac{1}{y^2} d x d y.
\end{equation*}
The orientation-preserving isometry group of $\bH$ can be identified with the group $\PSL(2,\R)$, consisting of real $2\times2$ matrices of determinant 1 modulo $\pm \id$. The group $\PSL(2,\R)$ acts on $\bH$ via M\"obius transformations. That is, for $\gamma\in \PSL(2,\R)$ with
\begin{equation}\label{EQ:Pre_gamma_def}
    \gamma=\begin{pmatrix}
        a &b \\ c &d
    \end{pmatrix}
\end{equation}
the element $\gamma z$ for any $z\in \bH$ is given by
\begin{equation*}
    \gamma z = \frac{az +b}{cz +d}.
\end{equation*}
The unit tangent bundle of $\bH$ is 
\begin{equation*}
    T^1\bH = \big\{ (z,v) \in \bH \times T_z\bH \,|\, \norm{v}_z=1 \big\}.
\end{equation*}
The group $\PSL(2,\R)$ also acts on $T^1\bH$ through the action
\begin{equation*}
    \gamma (z,v) = \Big( \tfrac{az+b}{cz+d},\tfrac{v}{(cz+d)^2} \Big),
\end{equation*}
for any $(z,v)\in T^1\bH$ and $\gamma \in \PSL(2,\R)$, where $\gamma$ is again given by \eqref{EQ:Pre_gamma_def}. It can be verified that the two spaces $\PSL(2,\R)$ and $T^{1}\bH$ can be identified. This can be done, for example, by evaluating the action at the point $(i,i)$ to show that $\PSL(2,\R)$ is homeomorphic to $T^{1}\bH$. This leads to the identification that a point $(z,v)\in T^1\bH$ is identified with $\gamma\in \PSL(2,\R)$ such that $\gamma (i,i) = (z,v)$.

A compact hyperbolic hyperbolic surface can be seen as the quotient $ X=\Gamma\setminus \bH$ of $\bH$, where $\Gamma\in\PSL(2,\R)$ is a discrete subgroup consisting solely of hyperbolic elements and the identity. That is, for all $\gamma\in\Gamma\setminus\{\id\}$ we have $|\Tr[\gamma]|>2$. These groups are called Fuchsian groups. Given a point $z$ in $\bH$, we can associate the Dirichlet fundamental domain $D$ to $X$ defined by
\begin{equation*}
    D = D_z := \big\{ z'\in \bH : d(z,z') < d(z,\gamma z'), \, \forall \, \gamma \in \Gamma\setminus\{\id\}\big\}.
\end{equation*}
We will, if nothing else is assumed, suppose that the fundamental domain is generated around the point $i$ in $\bH$.

The injectivity radius of $X = \Gamma \setminus \bH$ at a point $z\in X $ is given by
\begin{equation*}
    \InjRad_X(z) = \frac{1}{2} \min_{\gamma\in\Gamma\setminus\{\id\}} d(z,\gamma z),
\end{equation*}
and the global injectivity radius of the surface $X$ is
\begin{equation*}
    \InjRad_X = \inf_{z\in X} \InjRad_X(z).
\end{equation*}
In our analysis, we will need to split the surface into two parts, the one where the injectivity radius is large and the other where it is small: for $T>0$ we will use the notation
\begin{equation*}
    X(> 2T) \coloneqq \{ x\in X \,|\, \InjRad_X(x) \geq 2T \} \qquad\text{and}\qquad X(\leq 2T) = X\setminus X(> 2T).
\end{equation*}
We will use similar notation for the corresponding parts of the surface in the fundamental domain. 

\subsection{Lattice counting}

In the proof, we will need some a priori lattice counting estimates. This is contained in the following lemma, which is a slightly modified version of \cite[Lemma 5]{Marklof_2011}, and the proof is based on the same ideas. The difference is that we are giving explicit constants. 
\begin{lemma}\label{LE:counting_loop_lemma}
Let $X=\Gamma\setminus \bH$, where $\Gamma$ is a strictly hyperbolic Fuchsian group. Then for any
$
0<r\leq \min\!\left(1,\frac{\InjRad_X}{2}\right)$ and $t\geq 0$ one has
 \begin{equation*}
 \sup_{(x,y)\in \bH\times \bH}
 \sharp\{\gamma\in\Gamma\mid d(x,\gamma y)\leq t\}
 \leq
 \frac{e^{t+1}}{r^2}.
 \end{equation*}
\end{lemma}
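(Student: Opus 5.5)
The plan is the standard disjoint-balls volume argument. Fix $x,y\in\bH$ and let $S=\{\gamma\in\Gamma : d(x,\gamma y)\le t\}$. First, the balls $B(\gamma y,r)$, $\gamma\in\Gamma$, are pairwise disjoint. Indeed, if $\gamma\neq\gamma'$, then $d(\gamma y,\gamma' y)=d(y,\gamma^{-1}\gamma' y)\ge 2\InjRad_X(y)\ge 2\InjRad_X\ge 4r>2r$, using $r\le \InjRad_X/2$. Here $\Gamma$ is strictly hyperbolic, so it acts freely and $\InjRad_X>0$ by compactness; the bound is only meaningful when $\InjRad_X>0$.

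Second, for $\gamma\in S$ the ball $B(\gamma y,r)$ lies in $B(x,t+r)$ by the triangle inequality. Comparing volumes gives
\[
\sharp S\cdot \Vol B(r)\le \Vol B(t+r).
\]
On $\bH$ we have $\Vol B(\rho)=2\pi(\cosh\rho-1)=4\pi\sinh^2(\rho/2)$. This yields the upper bound $\Vol B(t+r)\le \pi e^{t+r}\le \pi e^{t+1}$, since $4\sinh^2(s/2)\le e^{s}$ and $r\le1$.

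Third, we need a lower bound on the small ball: $\Vol B(r)=4\pi\sinh^2(r/2)\ge 4\pi (r/2)^2=\pi r^2$, because $\sinh u\ge u$. Dividing then gives
\[
\sharp S\le \frac{\pi e^{t+1}}{\pi r^2}=\frac{e^{t+1}}{r^2},
\]
uniformly in $(x,y)$. Since the right-hand side does not depend on $x$ or $y$, the supremum obeys the same bound.

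There is no real obstacle here. The only points needing care are the explicit constants: checking $2\pi(\cosh\rho-1)\le \pi e^{\rho}$ and $\sinh(r/2)\ge r/2$. The disjointness step also needs the injectivity radius at $y$ to control the full orbit separation, which holds because $d(\gamma y,\gamma' y)=d(y,\gamma^{-1}\gamma'y)$ and $\gamma^{-1}\gamma'\neq\id$.
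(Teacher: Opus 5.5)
Your argument is correct and is essentially the paper's proof: the pairwise disjoint balls $B(\gamma y,r)$ are packed into $B(x,t+r)$, and then $\Vol B(\rho)=4\pi\sinh^2(\rho/2)$ is bounded above and below. The difference is only in how the constants are tracked. You use $r\le 1$ to bound $e^{t+r}\le e^{t+1}$ directly, whereas the paper keeps $e^{t+r}$ and cancels it against the weaker lower bound $16\pi\sinh^2(r/2)\ge 4\pi e^{r-1}r^2$. Your explicit check of disjointness via $d(\gamma y,\gamma' y)=d(y,\gamma^{-1}\gamma' y)\ge 2\InjRad_X\ge 4r$ fills in a step the paper only asserts.
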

\begin{proof}
Let $x,y\in \bH$ be two points in the hyperbolic plane. 
Due to our assumptions on $r$, it follows that $B(\gamma y,r)\cap B(\gamma' y, r) =\emptyset$ for all $\gamma,\gamma'\in\Gamma$ and 
\begin{equation}\label{EQ:counting_loop_lemma_1}
    \bigcup_{\substack{\gamma \in \Gamma \\ d(x,\gamma y)\leq t}} B(\gamma y,r) \subset B(x,t+r).
\end{equation}
Now we set
\begin{equation*}
   M_{xy} =  \sharp \big\{ \gamma \in \Gamma \, \big| \, d(x,\gamma y) \leq t \big\}.
\end{equation*}
Using \eqref{EQ:counting_loop_lemma_1} and that $\Vol(B(\gamma y,r)) = 4\pi \sinh^2(\frac{r}{2})$ for all $\gamma \in \Gamma$ we have
\begin{equation*}
   4\pi e^{t+r} \geq 4\Vol(B(x,t+r)) \geq M_{xy} 16\pi \sinh^2(\frac{r}{2}) \geq M_{xy} 4\pi e^{r-1} r^2.
\end{equation*}
This implies the bound
\begin{equation*}
    M_{xy} \leq \frac{e^{t+1}}{r^2},
\end{equation*}
where we have used our assumptions on $r$. Since the upper bound is independent of $x$ and $y$, this implies the stated estimate and concludes the proof.
\end{proof}

\subsection{Mixing of the geodesic flow} \label{sec:mixinggeo}

The geodesic flow $\varphi_t$ on $\mathbb{H}$ is a one-parameter family of maps
\begin{equation*}
    \varphi_t : T^{1}\bH \mapsto T^{1}\bH, \qquad t\in\R
\end{equation*}
such that for any point $(z,v)\in T^1\bH$ the curve $\varphi_t(z,v) = (\eta(t),\eta'(t))$ is the unique geodesic parameterised by the arc length that satisfies $\eta(0)=z$ and $\eta'(0) =v$. Under the identification of $T^{1}\bH$ with $\PSL(2,\R)$ we can parameterise the geodesic flow as a right multiplication in $\PSL(2,\R)$  by the one parameter subgroup 
\begin{equation*}
    \Bigg\{ \begin{pmatrix}
        e^{t/2} &0 \\ 0 &e^{-t/2}
    \end{pmatrix} \,\Bigg| \, t\in\R \Bigg\}.    
\end{equation*}

Given a point $z$ in $\bH$, we can parameterise any other point $z'$ in $\bH$ using the geodesic flow. We do this in the usual way by fixing some direction so that it has angle 0 and then any point $z'$ in $\bH$ can be written as
\begin{equation*}
    z' = \pi_1(\varphi_r(z,\theta)),
\end{equation*}
where $r = d(z,z')$, $\pi_1$ is the projection onto the first coordinate, and $\theta$ is the angle depending on the fixed direction and $z'$. Using this, we can define the polar coordinates in $\bH$ around $z$ and the change of variables $z'\mapsto (r,\theta)$ transforms the metric and the volume element as follows
\begin{equation*}
    ds^2 = dr^2 +\sinh^2(r) d \theta^2 \qquad\text{and}\qquad dz = \sinh(r) dr d\theta.
\end{equation*}

Let us now fix a compact hyperbolic surface $X = \mathrm{SO}(2,\R) \setminus \mathrm{SL}(2,\R)/\Gamma $, where $\Gamma$ is a lattice in $\mathrm{SL}(2,\R)$. As a Riemannian surface $X$, we have the canonical unit-cotangent bundle $T^1 X$ consisting formally of pairs $x \in X$ and directions $\theta \in S_x^* X$, which we can identify with the unit circle $\S^1$. Then the unit co-tangent bundle of $X$ can be identified with $T^1 X = \mathrm{SL}(2,\R)/\Gamma,$ where also geodesic flow $\phi_t : T^1 X \to T^1 X$ is defined, and for which the unique absolutely continuous invariant measure \textit{Liouville measure} $dx \, d\xi$ projects onto the hyperbolic volume measure $dx$ on $\mathbb{H}$.

A key tool in the proof of Theorem \ref{thm:main} is the use of the property \textbf{(MIX)} for the geodesic flow. On compact hyperbolic surfaces $X$ we can always bound the rate of mixing using the spectral gap $\lambda_1(X)$. These were first proved by Ratner \cite{Ratner} who connected the spectral gap to the rate of exponential mixing of the geodesic flow via representation theoretic approach, but the quantitative bound we use here comes from the work of Matheus \cite{Matheus}, in particular that the multiplicative constant is independent of the surface:

\begin{theorem}
\label{Thm: Exponential mixing}
Let $X$ be a compact connected hyperbolic surface with geodesic flow $ \varphi_{t} $ and with $\lambda_1( X )$ being the smallest eigenvalue of the Laplace-Beltrami operator $ -\Delta_X $. Let $f,g \in L^2(X)$. Then for all $t \ge 0$
\[
\bigg| \big\langle f \circ \varphi_t, g \big\rangle_{L^2(T^1X)} 
- \int_{T^1X} f \int_{T^1X} g \bigg|
\leq 11 \, e^{ \beta( \lambda_{1}( X ) ) } \, ( 1 + t ) \, e^{- \beta(\lambda_1( X ) ) t} \, 
\|f\|_2 \, \|g\|_2,
\]
where
\begin{align*}
	&
	\beta( \lambda ) = 
	\begin{cases}	
		1 - \sqrt{1 - 4\lambda}, \qquad & \text{ if } \lambda \leq \frac{1}{4},
		\\
		1, \qquad & \text{ if } \lambda > \frac{1}{4},
	\end{cases}
\end{align*}
and where $f(x) = f(x,\theta)$ and $g(x) = g(x,\theta)$ are viewed as functions on $T^1X$, the unit tangent bundle.
\end{theorem}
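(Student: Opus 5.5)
The plan is to reduce the correlation to a spectral sum over Laplace eigenfunctions, using that $f$ and $g$ are pulled back from $X$ and so are $\mathrm{SO}(2)$-invariant. Write $f=\sum_j \hat f_j u_j$ and $g=\sum_j \hat g_j u_j$ in an orthonormal eigenbasis of $-\Delta_X$, with eigenvalues $0=\lambda_0<\lambda_1(X)\leq\lambda_2\leq\cdots$. The $\lambda_0$ term is the constant function. With the normalisation of Liouville measure implicit in the statement, it produces exactly $\int_{T^1X} f\int_{T^1X} g$, so it cancels.

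For each $j\geq1$, I would use the standard fact (the Selberg/Harish-Chandra mean-value property for eigenfunctions of the invariant Laplacian) that the average of $u_j$ over the circle of radius $t$ about $x$ equals $\varphi_{\lambda_j}(t)\,u_j(x)$. Here $\varphi_\lambda$ is the spherical function on $\bH$: the radial eigenfunction with $\varphi_\lambda(0)=1$, a Legendre function. Integrating over the fibre $\theta\in\S^1$, this gives $\langle u_j\circ\varphi_t,u_k\rangle_{L^2(T^1X)}=\varphi_{\lambda_j}(t)\delta_{jk}$. Hence
\[
\Big|\langle f\circ\varphi_t,g\rangle-\int f\int g\Big|=\Big|\sum_{j\geq1}\hat f_j\overline{\hat g_j}\,\varphi_{\lambda_j}(t)\Big|\leq \sup_{\lambda\geq\lambda_1(X)}|\varphi_\lambda(t)|\,\|f\|_2\|g\|_2
\]
by Cauchy--Schwarz. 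Equivalently, this is the decomposition of $L^2(\Gamma\backslash\mathrm{PSL}(2,\R))$ into irreducible unitary representations, in which $K$-fixed vectors have matrix coefficients $\varphi_\lambda(a_t)$.

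It then remains to prove an explicit pointwise bound on spherical functions, uniform in $\lambda\geq\lambda_1(X)$. I would use the integral representation
\[
\varphi_\lambda(t)=\frac{1}{2\pi}\int_0^{2\pi}\big(\cosh t-\sinh t\cos\theta\big)^{-s}\,d\theta,\qquad \lambda=s(1-s).
\]
In the tempered range $\lambda>\tfrac14$, $s=\tfrac12+i\rho$, bound $|\cdot|$ by the $s=\tfrac12$ case, i.e.\ by Harish-Chandra's $\Xi$-function, which is $\lesssim(1+t)e^{-t/2}$ in the distance parametrisation. In the complementary range $\lambda\leq\tfrac14$, $s\in[\tfrac12,1)$ is real, and $\varphi_\lambda$ is positive and monotone in $s$. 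Splitting the integral into the region where $\cosh t-\sinh t\cos\theta\lesssim e^{-t}$ (an angular window of size $e^{-t/2}$) and its complement gives $\varphi_\lambda(t)\lesssim(1+t)e^{-(1-s)t}$. Monotonicity in $\lambda$ lets me insert the worst case $\lambda=\lambda_1(X)$.

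The remaining work is to match the parametrisation of $\varphi_t$ as right multiplication by $\mathrm{diag}(e^{t/2},e^{-t/2})$ (which moves distance $t$) with the exponent $\beta(\lambda)$ in the statement, noting $1-2s=-\sqrt{1-4\lambda}$. I will then track constants to obtain the explicit factor $11e^{\beta}$, which is where I would follow Matheus.

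I expect the main obstacle to be this uniform explicit estimate on spherical functions, particularly near $\lambda=\tfrac14$. There the two regimes merge and the polynomial factor $(1+t)$ must come out with a constant independent of $\lambda$. I would handle it with the angular splitting above, applied uniformly in $s\in[\tfrac12,1)$, so that the constant depends only on the elementary integrals.
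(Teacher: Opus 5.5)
Your route is the natural one. In substance it is the $K$-invariant case of Ratner's representation-theoretic argument, which the paper does not reprove but imports from Ratner and Matheus. The reduction to $\sum_{j\ge1}\hat f_j\overline{\hat g_j}\,\varphi_{\lambda_j}(t)$ via the mean-value property is sound, and so are your spherical-function bounds. There is one slip: the angular window where $\cosh t-\sinh t\cos\theta\lesssim e^{-t}$ has width $\asymp e^{-t}$, not $e^{-t/2}$. The splitting still yields $(1+t)e^{-(1-s)t}$ uniformly in $s\in[\tfrac12,1)$.

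The genuine gap is the step you defer as bookkeeping: matching the time parametrisation with the exponent $\beta$. Take $\varphi_t$ to be right multiplication by $\mathrm{diag}(e^{t/2},e^{-t/2})$, so that time $t$ moves points a distance $t$; this is your convention and the paper's. Then your bounds give $(1+t)e^{-\beta t/2}$, not $(1+t)e^{-\beta t}$. Indeed, since $1-2s=-\sqrt{1-4\lambda}$, you have $1-s=\tfrac12\beta(\lambda)$, and in the tempered range the rate $e^{-t/2}$ is again $e^{-\beta t/2}$. No tracking of constants can recover the missing factor $2$, because your reduction is an identity. For $f=g=u_j$ the normalised correlation is exactly $\varphi_{\lambda_j}(t)$. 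This is $\asymp e^{-\beta t/2}$ for $j=1$ when $\lambda_1(X)<\tfrac14$, and $\asymp t e^{-t/2}$ when $\lambda_1(X)=\tfrac14$. Whenever $\lambda_j>\tfrac14$, it is of size $\asymp e^{-t/2}$ along a sequence $t\to\infty$. Hence, read with the unit-speed flow, the displayed inequality fails on every compact $X$, and the last step of your plan cannot be completed.

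What your argument does prove is the same estimate for the flow given by $\mathrm{diag}(e^{t},e^{-t})$, whose time-$t$ map moves distance $2t$. Evaluating your bounds at distance $2t$ gives $\sup_{\lambda\ge\lambda_1(X)}|\varphi_\lambda(2t)|\lesssim(1+t)e^{-\beta(\lambda_1(X))t}$; the explicit constant can then be taken from Matheus's bookkeeping, as you planned. Equivalently, for the unit-speed geodesic flow the correct bound is $C(1+t)e^{-\beta t/2}$. You should fix this normalisation explicitly at the outset, stating the result either for $\varphi_{2t}$ or with $\beta/2$, rather than leaving it to the end. For the paper's purposes the discrepancy is harmless, since downstream only polynomial decay with some $\kappa>1$ is used. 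As a proof of the statement as written, however, your plan cannot close.
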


Thus \textbf{(MIX)} holds for any $\gamma > 0$ due to the exponential rate. However, as Theorem \ref{thm:main} only needs polynomial rate of mixing, this allows us to also consider sequences of surfaces where the spectral gap $\lambda_1(X_n) \to 0$ as $n \to \infty$.

Furthermore, note that in this statement, the observables $f, g$ are in $L^2(X)$ and then lifted to $L^2(T^1X)$. This is why the error term only has $L^2$ norms. There is a more general (and usual) form in which the observables are defined on phase space $T^1 X$, and then the error term can be given in terms of Sobolev norms of the observables. This can probably be used to give a phase space version of our result along the lines of Abért-Bergeron-Le Masson \cite{abertEigenfunctionsRandomWaves2018}, see also the recent work of Anantharaman and Saha \cite{AnantharamanSaha} but we do not pursue this in this work.

\section{Spectral theory of Schr\"odinger operators}\label{SEC:Spectral_theory_schr}

\subsection{Definition of operators, self-adjointness and large scale spectral bounds}\label{SEC:Kernels_def_selfadj}

Given a connected hyperbolic surface $X$ we denote by $-\Delta_X$ the standard Laplace-Beltrami operator acting in $L^2(X)$, defined in the usual way. This is a positive self-adjoint operator with domain $\mathcal{D}(-\Delta_X) = H^2(X)$, the second order Sobolev space. By the Kato-Rellich theorem it follows that for all real valued $V\in L^\infty(X)$ the Schr\"odinger operator 
\begin{equation*}
    H_V = -\Delta_X +V,
\end{equation*}
 is well defined, self-adjoint, and has domain $\mathcal{D}(H_V) = H^2(X)$. For details, see, e.g. \cite{ReedSimonII}. Moreover, we have that if the hyperbolic surface $X$ is compact, then $-\Delta_X$ and $H_V$ for any real valued $V\in L^\infty(X)$ have a purely discrete spectrum. For the case where $X=\bH$ we have that $-\Delta_\bH$ have purely absolute continuous spectrum in $[\frac{1}{4},\infty)$. Moreover, according to the Kato–Rellich theorem, for any real valued $V \in L^\infty(\bH)$, the essential spectrum of $H_V$ is $[\frac{1}{4},\infty)$. In particular, note that we do not a priori know that $[\frac{1}{4},\infty)$ is an absolute continuous spectrum. Self-adjointness of the operators gives us access to the functional calculus for the operator, and hence for any measurable function $f$ the (possible unbounded) operator $f(H_V)$ is well defined; see, e.g., \cite{ReedSimonI}. We will here be interested in three different functions of our operators. Two of them are defined in the following subsection. The third function corresponds to the spectral projections. For a given interval $I$ we will use the notation $\Pi_I(H_V)$ to denote the spectral projection of $H_V$ onto $I$. That is,
\begin{equation*}
    \Pi_I(H_V) = \boldsymbol{1}_I(H_V).
\end{equation*}

In our setting, we are considering a sequence of surfaces that Benjamini-Schramm converges to the hyperbolic plane. For our approach here, it is important to know how the number of eigenvalues in a fixed window $I = [a,b]\subset(\frac{1}{4},\infty)$ behaves under this limit. For the case of the Laplace-Beltrami operator on a sequence of compact connected hyperbolic surfaces that Benjamini-Schramm converges to the hyperbolic plane we have from \cite[Lemma 9.1]{lemassonQuantumErgodicityBenjaminiSchramm2017} the following large scale Weyl law result.     
    
\begin{lemma}\label{LE:Weyl_laplace}
Let $\{X_n\}_{n\in\N}$ be a sequence of compact connected hyperbolic surfaces that satisfy the property {\normalfont\textbf{(BSC)}}. For any compact interval $I\subset (\frac{1}{4},\infty)$, we have 
\begin{equation}
    \lim_{n\rightarrow\infty} \frac{\mathcal{N}(-\Delta_{X_n},I)}{\Vol(X_n)} = \frac{1}{4\pi} \int_\R \boldsymbol{1}_I (\tfrac{1}{4} +r^2) \tanh(\pi r) r \, d r,
\end{equation}
where $\mathcal{N}(-\Delta_{X_n},I)$ denotes the number of eigenvalues of $-\Delta_{X_n}$ in the interval $I$. 
\end{lemma}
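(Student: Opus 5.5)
The plan is to prove the Weyl law by comparing traces of smooth spectral functions via the Selberg pre-trace formula, splitting the surface into its thick and thin parts according to \textbf{(BSC)}. First I would replace $\boldsymbol{1}_I$ by smooth approximants. Given $\varepsilon>0$, choose even Schwartz functions $h_\pm$ in the variable $r$ whose Fourier transforms $\hat h_\pm$ are compactly supported, say in $[-R,R]$, and which satisfy
\[
h_-(r)\le \boldsymbol{1}_I(\tfrac14+r^2)\le h_+(r), \qquad r\in\R\cup i[-\tfrac12,\tfrac12].
\]
In addition, the integrals of $h_\pm - \boldsymbol{1}_I(\tfrac14+\cdot^2)$ against the Plancherel density $\frac{1}{4\pi}\tanh(\pi r)r$ should be at most $\varepsilon$. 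Functions with this one-sided property can be built as Beurling–Selberg type majorants and minorants. The small eigenvalues $\lambda<\frac14$ correspond to imaginary $r$, so the bounds must also hold there. For the minorant this needs care: I would take $h_-$ to be a square of a Paley–Wiener function minus a small correction, or simply control the contribution of the eigenvalues below $\frac14$ separately.

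Next, apply the pre-trace formula. For each $h$ we have
\[
\sum_j h(\rho_j)=\int_{D_n}\sum_{\gamma\in\Gamma_n}k(d(z,\gamma z))\,dz,
\]
where $k$ is the Selberg transform of $h$. Because $\hat h$ has compact support, $k$ is supported in $[0,R]$ and is bounded. The identity term contributes exactly $\Vol(X_n)\,k(0)$. By Selberg inversion,
\[
k(0)=\frac{1}{4\pi}\int_\R h(r)\,r\tanh(\pi r)\,dr.
\]

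The remaining task is the nonidentity terms. For $z$ in the thick part $X_n(>R)$, every $\gamma\neq\id$ satisfies $d(z,\gamma z)>R$, so these terms vanish. On the thin part I would bound the number of relevant $\gamma$ by a lattice count of order $e^{R}$ times a local constant; here Lemma~\ref{LE:counting_loop_lemma} applies if \textbf{(UND)} holds. Without \textbf{(UND)}, I would use the standard estimate from \cite{lemassonQuantumErgodicityBenjaminiSchramm2017}, in which $\sum_\gamma$ is bounded in terms of $\InjRad(z)^{-1}$ and integrated over the thin part. Either way, the total nonidentity contribution is $O_R(|X_n(\le R)|)$, and this is $o(\Vol(X_n))$ by \textbf{(BSC)}.

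Finally, sandwich the counting function:
\[
\sum_j h_-(\rho_j)\le \mathcal N(-\Delta_{X_n},I)\le \sum_j h_+(\rho_j).
\]
Divide by $\Vol(X_n)$, let $n\to\infty$, and then let $\varepsilon\to0$. The main obstacle is constructing $h_\pm$ with compactly supported Fourier transform that dominate, or are dominated by, the indicator on the whole spectral parameter set, including imaginary $r$. The thin-part counting estimate without a uniform injectivity radius bound is a secondary difficulty, and I would handle it as in \cite[Lemma 9.1]{lemassonQuantumErgodicityBenjaminiSchramm2017}.
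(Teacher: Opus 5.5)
Under \textbf{(UND)} your argument is a valid, self-contained proof; the paper gives none of its own and only cites \cite[Lemma 9.1]{lemassonQuantumErgodicityBenjaminiSchramm2017}. In that case Lemma~\ref{LE:counting_loop_lemma} bounds the non-identity terms by $\|k\|_\infty e^{R+1}r^{-2}\Vol(X_n(\le R))=o(\Vol(X_n))$.

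The genuine gap is your branch without \textbf{(UND)}. The claim that the non-identity contribution is still $O_R(|X_n(\le R)|)$ is false. In the collar of a closed geodesic of length $\ell$, the injectivity radius at distance $\rho$ from the core is $\asymp \ell\cosh\rho$, and the area element is $\ell\cosh\rho\,d\rho$. Hence $\int \InjRad_{X_n}(z)^{-1}\,dz$ over the collar is $\asymp \log(1/\ell)$, while the whole collar has area $O(1)$. Correspondingly, the hyperbolic terms $\sum_{k\ge1}\frac{\ell}{\sinh(k\ell/2)}\hat h(k\ell)$ are of size $\hat h(0)\log(1/\ell)$, not $O(1)$.

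No sharper estimate can repair this, because the statement with \textbf{(BSC)} alone is false. The collar has length $\approx 2\log(1/\ell)$. By Dirichlet--Neumann bracketing and the one-dimensional Weyl law for the rotation-invariant modes, it carries about $\frac{2}{\pi}\log(1/\ell)\big(\sqrt{b-\frac14}-\sqrt{a-\frac14}\big)$ eigenvalues in $I=[a,b]$. Now cut $X_n$ along a non-separating simple closed geodesic of length $O(\log g_n)$, and glue in a torus with two holes (area $4\pi$) containing a geodesic of length $\ell_n$. The thin part grows only by $O_R(\log g_n)$, so \textbf{(BSC)} survives. Yet $\mathcal N(-\Delta,I)/\Vol\to\infty$ once $\log(1/\ell_n)/g_n\to\infty$. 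You must therefore assume \textbf{(UND)}, which is how the lemma is actually used (through Lemma~\ref{LE:Weyl_schrodinger} in the proof of Theorem~\ref{thm:main}). Alternatively, assume a quantitative substitute such as $\int_{X_n(\le R)}\InjRad_{X_n}(z)^{-1}\,dz=o(\Vol(X_n))$ for every $R$.

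Two smaller points in your approximation step need fixing, but both are fixable.

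First, literal Beurling--Selberg functions decay only like $r^{-2}$. Then both $\sum_j h(\rho_j)$ and $\int (h_\pm-\boldsymbol{1}_I(\frac14+r^2))\,r\tanh(\pi r)\,dr$ diverge logarithmically. Use Schwartz Paley--Wiener functions instead. For example, take $h_+=g^2$ with $\hat g=\hat\psi\,\chi(\cdot/L)$, where:
\begin{itemize}
\item $\psi$ is even, smooth, compactly supported, and $\psi\ge1+\eta$ on $\{r:\frac14+r^2\in I\}$;
\item $\chi$ is an even bump equal to $1$ near $0$.
\end{itemize}
Then $g\to\psi$ in the Schwartz topology, and $g^2\ge0$ on $\R\cup i\R$ automatically.

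Second, for the minorant, treating the eigenvalues below $\frac14$ separately requires $\#\{j:\lambda_j\le\frac14\}=o(\Vol(X_n))$. The a priori $O(g_n)$ bounds do not give this. It does follow from your own scheme applied to $h=g_L^2$, where $g_L(r)=\int\chi(\xi)\cos(Lr\xi)\,d\xi$ with $\chi\ge0$ and $\int\chi=1$. Indeed $g_L(is)\ge1$ for $s\in[0,\frac12]$, while $\int g_L^2\,|r|\,dr=O(L^{-2})$.
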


Using this result, we have the following asymptotic lower bound for the number of eigenvalues for a sequence of Schr\"odinger operators in a sequence of compact intervals $I_n$ containing a fixed spectral window $I$.

\begin{lemma}[Lower bound for counting function]\label{LE:Weyl_schrodinger}
Let $\{X_n\}_{n\in\N}$ be a sequence of compact connected hyperbolic surfaces that satisfy the property {\normalfont\textbf{(BSC)}} and let $\{V_n\}_{n\in\N}$ be a sequence of potentials satisfying {\normalfont\textbf{(POT)}}. For any compact interval $I_n = [a_n,b_n]\subset (\frac{1}{4},\infty)$ containing a fixed $I = [a,b] \subset I_n$ for all $n \in \N$, such that $b-C_{\max} >a  -\min(0, C_{\min})$, there exist a positive constant $c$ and a natural number $N\in\N$ such that 
\begin{equation}
    \mathcal{N}(H_{V_n},I_n) \geq c\Vol(X_n) \qquad\text{for all $n\geq N$.} 
\end{equation}
The constant $c$ and $N$ depends on the numbers $a$, $b$, $C_{\max}$, and $C_{\min}$.
\end{lemma}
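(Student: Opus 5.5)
The plan is to compare $H_{V_n}$ with shifted Laplacians through the min--max principle, and then use the Weyl law of Lemma~\ref{LE:Weyl_laplace} for $-\Delta_{X_n}$. Write $N_A(\lambda)=\sharp\{j:\lambda_j(A)\le\lambda\}$ for a self-adjoint operator $A$ with discrete spectrum.

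\emph{Operator comparison.} By \textbf{(POT)}, in the sense of quadratic forms on $H^1(X_n)$,
\[
-\Delta_{X_n}+C_{\min}\le H_{V_n}\le -\Delta_{X_n}+C_{\max}.
\]
The min--max characterisation of the ordered eigenvalues then gives
\[
\lambda_j(-\Delta_{X_n})+C_{\min}\le \lambda_j(H_{V_n})\le \lambda_j(-\Delta_{X_n})+C_{\max}\qquad\text{for every } j.
\]
From this I obtain, for all $\lambda$,
\[
N_{H_{V_n}}(\lambda)\ge N_{-\Delta_{X_n}}(\lambda-C_{\max}),
\qquad
\sharp\{j:\lambda_j(H_{V_n})<\lambda\}\le \sharp\{j:\lambda_j(-\Delta_{X_n})<\lambda-C_{\min}\}.
\]
Since $[a,b]\subset I_n$, subtracting the second inequality at $\lambda=a$ from the first at $\lambda=b$ yields
\[
\mathcal N(H_{V_n},I_n)\ge \mathcal N(H_{V_n},[a,b])\ge \mathcal N\big(-\Delta_{X_n},[a-C_{\min},\,b-C_{\max}]\big).
\]
If $a-C_{\min}>b-C_{\max}$ the right-hand side is vacuous, but the hypothesis below rules this out.

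\emph{Choosing an admissible window.} Lemma~\ref{LE:Weyl_laplace} only applies to compact intervals inside $(\tfrac14,\infty)$, whereas $a-C_{\min}$ may lie below $\tfrac14$. Set $\alpha=a-\min(0,C_{\min})$. This satisfies $\alpha\ge a>\tfrac14$ and $\alpha\ge a-C_{\min}$, and by hypothesis $\beta:=b-C_{\max}>\alpha$. Let
\[
J=\big[\tfrac{\alpha+\beta}{2},\,\beta\big]\subset(\tfrac14,\infty),
\]
a compact interval of positive length contained in $[a-C_{\min},b-C_{\max}]$. Then $\mathcal N(H_{V_n},I_n)\ge \mathcal N(-\Delta_{X_n},J)$, and $J$ depends only on $a,b,C_{\min},C_{\max}$.

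\emph{Applying the Weyl law.} By Lemma~\ref{LE:Weyl_laplace},
\[
\frac{\mathcal N(-\Delta_{X_n},J)}{\Vol(X_n)}\longrightarrow L_J:=\frac{1}{4\pi}\int_\R \boldsymbol 1_J(\tfrac14+r^2)\tanh(\pi r)\,r\,dr.
\]
The integrand is positive on the nonempty set $\{r>0:\tfrac14+r^2\in J\}$, which has positive measure, so $L_J>0$. Taking $c=L_J/2$ and $N$ large enough that the ratio exceeds $c$ for $n\ge N$ proves the lemma, with $c,N$ depending only on $a,b,C_{\max},C_{\min}$.

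There is no real obstacle. The only delicate points are the endpoint bookkeeping in the min--max counting inequalities (strict versus non-strict at $a$) and choosing $J$ so that it avoids $\tfrac14$. The latter is exactly where the hypothesis $b-C_{\max}>a-\min(0,C_{\min})$ is used.
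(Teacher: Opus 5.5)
Your proof is correct and follows essentially the paper's argument. Both use min--max comparison with shifted Laplacians to get $\mathcal N(H_{V_n},I_n)\ge\mathcal N(-\Delta_{X_n},\tilde I)$ for a fixed window $\tilde I\subset(\tfrac14,\infty)$ of positive length, and then apply Lemma~\ref{LE:Weyl_laplace}. The only cosmetic difference is that the paper uses the weaker lower bound $-\Delta_{X_n}+\min(0,C_{\min})\le H_{V_n}$ to land directly on $[a-\min(0,C_{\min}),\,b-C_{\max}]$, whereas you use $C_{\min}$ and then shrink the window; halving to $J$ is unnecessary, since $[\alpha,\beta]$ already lies in $(\tfrac14,\infty)$.
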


\begin{proof}
In this proof, we will let $\widetilde{C}_{\min}= \min(0,C_{\min})$. Due to our assumptions on the sequence of potentials, we have for all $n\in\N$
\begin{equation*}
    -\Delta_{X_n}+\widetilde{C}_{\min} \leq H_{V_n} \leq  -\Delta_{X_n}+ C_{\max}
\end{equation*}
in the sense of quadratic forms. By the min-max Theorem (see, e.g. \cite[Chapter XIII]{ReedSimonIV}), this implies for all $\alpha\in\R$ that
\begin{equation*}
    \Tr\big[ \boldsymbol{1}_{(-\infty, \alpha)}(-\Delta_{X_n}+ C_{\max}) \big] \leq \Tr\big[ \boldsymbol{1}_{(-\infty, \alpha)}(H_{V_n}) \big] \leq \Tr\big[ \boldsymbol{1}_{(-\infty, \alpha)}(-\Delta_{X_n}+ \widetilde{C}_{\min}) \big].
\end{equation*}
The same inequalities are true if $\alpha$ is included in the interval. Using these, we have that
\begin{equation*}
    \begin{aligned}
    \Tr\big[ \boldsymbol{1}_{[a,b]}(H_{V_n}) \big] 
    ={}& \Tr\big[ \boldsymbol{1}_{(-\infty,b]}(H_{V_n}) \big] - \Tr\big[ \boldsymbol{1}_{(-\infty,a)}(H_{V_n}) \big] 
    \\
    \geq{}& \Tr\big[ \boldsymbol{1}_{(-\infty,b]}(-\Delta_{X_n}+ C_{\max}) \big] - \Tr\big[ \boldsymbol{1}_{(-\infty,a)}(-\Delta_{X_n}+ \widetilde{C}_{\min}) \big]
    \\
    ={}& \Tr\big[ \boldsymbol{1}_{(-\infty,b- C_{\max}]}(-\Delta_{X_n}) \big] - \Tr\big[ \boldsymbol{1}_{(-\infty,a-\widetilde{C}_{\min})}(-\Delta_{X_n}) \big]
    \\
    ={}& \Tr\big[ \boldsymbol{1}_{[a-\widetilde{C}_{\min},b- C_{\max}]}(-\Delta_{X_n}) \big].
    \end{aligned}
\end{equation*}
We set $\tilde{I} = [a-\widetilde{C}_{\min},b- C_{\max}]$. Due to our assumptions, we have $\tilde{I} \subset (\frac{1}{4},\infty)$.  Since $\Tr\big[ \boldsymbol{1}_{\tilde{I}}(-\Delta_{X_n}) \big] = \mathcal{N}(-\Delta_{X_n},\tilde{I})$ it follows from Lemma~\ref{LE:Weyl_laplace} that there exists a positive constant $c$ and $N\in \N$ such that 
\begin{equation*}
    \Tr\big[ \boldsymbol{1}_{[a-\widetilde{C}_{\min},b- C_{\max}]}(-\Delta_{X_n}) \big] \geq c \Vol(X_n) \qquad\text{for all $n\geq N$,}
\end{equation*}
where the constant $c$ and $N$ depends on the numbers $a$, $b$, $C_{\max}$ and $\widetilde{C}_{\min}$. Combining this with the estimate obtained above, we have the following
\begin{equation*}
    \Tr\big[ \boldsymbol{1}_{[a,b]}(H_{V_n}) \big] \geq c \Vol(X_n) \qquad\text{for all $n\geq N$.}
\end{equation*}
This concludes the proof.
\end{proof}

\subsection{Propagators}\label{SEC:propagators}
As usual in these kinds of results we will introduce a propagator to access dynamical information. Let $X$ be a connected hyperbolic surface, and let $V\in L^\infty(X)$ be real valued and consider the Schr\"odinger operator $H_V = -\Delta +V$.  Then the propagator we will use here is the following wave propagator
\begin{equation*}
    P_V(t) = \big(H_V-\tfrac{1}{4}\big)^{-\frac12}\sin\big(t\sqrt{H_V-\tfrac14}\big),
\end{equation*}
where the subtraction of $\frac14$ is a spectral shift, which is introduced since the spectrum of $-\Delta_\bH$ is $[\frac14,\infty)$. We can define these propagators using the functional calculus. One thing to note here is that $P_V(t)$ is self-adjoint and bounded for all $t\geq 0$. To see this, let $h: \R_+\times \R \mapsto \R$ be given by
\begin{equation}\label{EQ:def_h}
    h(t,\lambda) = \big(\lambda-\tfrac{1}{4}\big)^{-\frac12}\sin\big(t\sqrt{\lambda-\tfrac14}\big).
\end{equation}
That $h$ is indeed real valued can be seen by taking $\lambda_0<\frac14$ and computing that
\begin{equation*}
     h(t,\lambda_0) = \big(\tfrac{1}{4} - \lambda_0 \big)^{-\frac12}\sinh\big(t\sqrt{\tfrac14 - \lambda_0}\big).
\end{equation*}
Moreover, it is also easily seen that when restricted to $\R_+\times [\alpha,\infty)$, the function $h$ is bounded, for any $a \in \R$. Since our operator $P_V(t) = h(t,H_V)$ and $H_V$ is self-adjoint and semi-bounded from below, we see that $P_V(t)$ is a self-adjoint bounded operator.

The intuition behind calling this a propagator is that for a suitable function $\varphi$, then the function $u(t,x)=P_V(t) \varphi$ is a solution to the wave equation $\partial_t^2u = (H_V -\frac{1}{4}) u$ with the initial conditions $u(0,x) = 0$ and $\partial_t u(t,x)|_{t=0} = \varphi(x)$.

It would be desirable to have good expressions for the kernel of the operator $ P_V(t)$. However, this is not feasible when the potential $V$ is just a general bounded function with real values. What we have instead is that the propagator $P_V(t)$ satisfies a Duhamel formula. For the sake of completeness, we will include the formula as a lemma and also prove it.  
\begin{lemma}
Let $X$ be a connected hyperbolic surface and $H_V = -\Delta_X + V$ be a Schr\"odinger operator with $V\in L^\infty(X)$ with the convention that $H_0=-\Delta_X$ is the Laplace-Beltrami operator. Consider the wave propagators 
\begin{equation*}
    P_V(t) = h(t,H_V) \qquad\text{and}\qquad  P_0(t) = h(t,H_0),
\end{equation*}
where $h$ is given by \eqref{EQ:def_h}. Then for any $t>0$ we have 
\begin{equation}\label{Duhamel_formula_main}
    P_V(t) = P_0(t) - Q_V(t),
\end{equation}
where 
\begin{equation}\label{Duhamel_formula_main_Q}
    Q_V(t) =  \int_{0}^{t} P_V(t_1) V P_0(t-t_1) \, dt_1.
\end{equation}
\end{lemma}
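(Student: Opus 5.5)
The plan is to prove \eqref{Duhamel_formula_main} by the usual variation-of-constants argument for a second-order equation, carried out at the level of bounded operators, with $A_V = H_V - \tfrac14$ and $A_0 = H_0 - \tfrac14$. For fixed $t>0$, define
\[
F(s) = P_V(s)\,\partial_s\!\big[P_0(t-s)\big] \;-\; \partial_s\!\big[P_V(s)\big]\,P_0(t-s), \qquad s\in[0,t],
\]
where $\partial_s P_V(s) = \cos(s\sqrt{A_V})$ and $\partial_s P_0(t-s) = -\cos((t-s)\sqrt{A_0})$. All these operators are defined by the functional calculus and are bounded uniformly for $s\in[0,t]$, since the functions $h(s,\lambda)$ and $\partial_s h(s,\lambda)=\cos(s\sqrt{\lambda-1/4})$ are bounded on $[0,t]\times[\alpha,\infty)$ for any $\alpha$, and $H_V$, $H_0$ are bounded below. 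At the endpoints we have $P_V(0)=0$, $\partial_sP_V(0)=I$, $P_0(0)=0$ and $\partial_sP_0(0)=I$. Hence $F(0) = -P_0(t)$ and $F(t) = -P_V(t)$, so $F(t)-F(0) = P_0(t)-P_V(t)$.

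The next step is to differentiate $F$ in $s$ on the dense set $H^2(X)=\mathcal D(H_V)=\mathcal D(H_0)$, which is legitimate by Kato–Rellich. On this domain, $\partial_s^2 P_V(s) = -A_V P_V(s)$ and $\partial_s^2 P_0(t-s) = -A_0P_0(t-s)$, with these identities holding strongly on $H^2$. The terms containing one first derivative on each factor cancel, so
\[
F'(s) = -P_V(s)A_0P_0(t-s) + A_VP_V(s)P_0(t-s) = P_V(s)(A_V-A_0)P_0(t-s) = P_V(s)\,V\,P_0(t-s).
\]
Here I use that $P_V(s)$ commutes with $A_V$ on the domain and that $P_0(t-s)$ maps $H^2$ into $H^2$. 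Integrating from $0$ to $t$ then gives $P_0(t)-P_V(t) = Q_V(t)$, which is the claimed formula.

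The main obstacle is making the differentiation rigorous, since the generators are unbounded. I would fix $\varphi,\psi\in H^2(X)$ and work with the scalar function $f(s)=\langle F(s)\varphi,\psi\rangle$. Writing $P_0(t-s)\varphi$ and $P_V(s)\psi$ through their spectral representations, dominated convergence shows that $s\mapsto P_0(t-s)\varphi$ is $C^2$ in $L^2$, with second derivative $-A_0P_0(t-s)\varphi$. The same holds for $P_V(s)\psi$, using $A_V$ and self-adjointness to move operators onto $\psi$. This makes $f$ a $C^1$ function whose derivative is $\langle P_V(s)VP_0(t-s)\varphi,\psi\rangle$. The integrand is strongly continuous and bounded, so $Q_V(t)$ is well defined as a strong integral, and the fundamental theorem of calculus applied to $f$ gives the identity in weak form on a dense set. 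Since both sides are bounded operators, it extends to all of $L^2(X)$.
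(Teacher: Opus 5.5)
Your argument is correct and uses the same mechanism as the paper: the fundamental theorem of calculus applied to products of the sine and cosine propagators, with $A_V-A_0=V$ as the only surviving term. The organisation differs, though.

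The paper proceeds in three steps, writing $S_i(t)=\cos(t\sqrt{H_i-\frac14})$:
\begin{itemize}
\item it writes $S_V(t)-S_0(t)$ as an integral by differentiating $t_1\mapsto S_V(t_1)S_0(t-t_1)$;
\item it integrates this once more in time, expressing $P_V(t)$ as $P_0(t)$ plus a double integral;
\item it collapses that double integral into $-Q_V(t)$ using a second identity, obtained from $\partial_{t_1}[P_V(t_1)S_0(t-t_1)]$.
\end{itemize}
Your Wronskian-type quantity $F(s)=-P_V(s)S_0(t-s)-S_V(s)P_0(t-s)$ combines both products from the start. The $S_VS_0$ cross terms cancel at once, and a single integration gives the formula with no interchange of integrals.

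You also supply something the paper leaves out. Its computation manipulates unbounded expressions such as $S_V(t_1)(H_0-\frac14)P_0(t_2-t_1)$ without discussing domains. Your argument makes the identity rigorous through:
\begin{itemize}
\item the weak formulation on $\mathcal D(H_V)=\mathcal D(H_0)$;
\item the $C^1$/$C^2$ regularity obtained by dominated convergence in the spectral representation;
\item self-adjointness to move $A_V$ across the inner product;
\item the final density argument.
\end{itemize}
The paper's route reaches the same identity, but it is longer, needs an extra rearrangement of integrals, and stays formal. Yours is shorter and complete.
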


\begin{proof}
  To establish this formula, we will need the following axillary operator $S_{i}(t)$ for $i\in\{0,V\}$ given by
\begin{equation*}
    S_i(t) =\cos \big(t\sqrt{H_i-\tfrac14}\big).
\end{equation*}
By functional calculus, this is again a well-defined operator. For these operators, we have the following relations
$$ 
\partial _t P_i(t) = S_i(t) \qquad\text{and}\qquad \partial_t S_i(t) = - (H_i-\tfrac14) P_i(t), 
$$
where $i\in\{0,V\}$. Using these relations, we obtain from the fundamental theorem of calculus that 
\begin{align*}
    P_i(t) &= \int_{0}^t S_i (t_1) \, dt_1, \qquad\text{for $i\in\{0,V\}$}
    \\
    \shortintertext{and}
     S_V(t) &= S_0(t)+  \int_{0}^t S_V(t_1) (H_0-\tfrac14) P_0(t-t_1) -P_V(t_1) (H_V-\tfrac14) S_0(t-t_1) \, dt_1.
\end{align*}
where we have used $P_i(0) = 0$ and $S_i(0)=1$ for $i\in\{0,V\}$. Combining these expressions yields the following identity
\begin{equation}\label{Duhamel_step_1}
    \begin{aligned}
       P_V(t)
         ={}&\int_{0}^{t} S_0(t_2) + \int_{0}^{t_2} S_V(t_1) (H_0-\tfrac14) P_0(t_2-t_1) -P_V(t_1) (H_V-\tfrac14) S_0(t_2-t_1) \, dt_1 dt_2
        \\
        ={}& P_0(t)+ \int_{0}^{t} \int_{0}^{t_2} S_V(t_1) (H_0-\tfrac14) P_0(t_2-t_1) - P_V(t_1)( H_V-\tfrac14) S_0(t_2-t_1) \,  dt_1 dt_2.
    \end{aligned}
\end{equation}
What remains to establish \eqref{Duhamel_formula_main} is to prove that the double integral equals $-Q_V(t)$. To establish this, we first notice that by the fundamental theorem of calculus we also have the following 
\begin{equation}\label{Duhamel_step_2}
     \begin{aligned}
       P_V(t)
         ={}&\int_{0}^{t}\partial_{t_1} \big[  P_V(t_1) S_0(t-t_1)\big]  \, dt_1
        \\
        ={}& \int_{0}^{t}  S_V(t_1) S_0(t-t_1)  \, dt_1 + \int_{0}^{t} P_V(t_1) (H_0-\tfrac14) P_0(t-t_1)  \, dt_1.
    \end{aligned}
\end{equation}
Returning to the double integral in \eqref{Duhamel_step_1} we notice that only one of the operators in each term depends on the variable $t_2$. Hence, by integrating we get
\begin{equation*}
    \begin{aligned}
       \MoveEqLeft \int_{0}^{t} \int_{0}^{t_2} S_V(t_1) (H_0-\tfrac14) P_0(t_2-t_1) - P_V(t_1) (H_V-\tfrac14) S_0(t_2-t_1) \,  dt_1 dt_2
       \\
       ={}& \int_{0}^{t} S_V(t_1) \,dt_1 - \int_0^t S_V(t_1) S_0(t-t_1) \, dt_1  - \int_{0}^{t} P_V(t_1) H_V P_0(t-t_1) \, dt_1 
       \\
       ={}&P_V(t) - \int_0^t S_V(t_1) S_0(t-t_1) \, dt_1  - \int_{0}^{t} P_V(t_1) (H_V-\tfrac14) P_0(t-t_1) \, dt_1 
       \\
       ={}& \int_{0}^{t} P_V(t_1) (H_0-\tfrac14) P_0(t-t_1)  \, dt_1 - \int_{0}^{t} P_V(t_1) (H_V-\tfrac14) P_0(t-t_1) \, dt_1 
       \\
       ={}& -\int_{0}^{t} P_V(t_1) V P_0(t-t_1)  \, dt_1,
    \end{aligned}
\end{equation*}
where we have used \eqref{Duhamel_step_2}. This establishes that the double integral is indeed equal to $-Q_V(t)$, and therefore we have established the Duhamel formula \eqref{Duhamel_formula_main}.   
\end{proof}

With this formula, it will suffice for later estimates to have a good expression for the kernel of $P_0(t)$. This is the content of the following Lemma which is taken from \cite{Hippi}, where it is Proposition~{6.1}. 
\begin{lemma}\label{LE:kernel_of_unper_pro}
    Let $X=\Gamma\setminus \bH$, where $\Gamma$ be a strictly hyperbolic Fuchsian group, and let $-\Delta_X$ be the Laplace-Beltrami operator. Set
    \begin{equation*}
        P_0(t) = h(t,-\Delta_X).
    \end{equation*}
    Then the integral kernel of $P_0(t)$ is given by
    \begin{equation*}
        K^\Gamma_{t}(x,y)= \sum_{\gamma \in \Gamma} A(t,d(x,\gamma y)) =\frac{1}{2\sqrt{2} \pi} \sum_{\gamma \in \Gamma} \frac{\boldsymbol{1}(t> d(x,\gamma y))}{\sqrt{\cosh(t) - \cosh(d(x,\gamma y))}}.
    \end{equation*}
\end{lemma}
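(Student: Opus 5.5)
Since $-\Delta_X$ commutes with isometries, I would first compute the kernel of $h(t,-\Delta_{\bH})$ on the hyperbolic plane and then periodize it over $\Gamma$. On $\bH$ the operator $P_0(t)$ solves the shifted wave equation $\partial_t^2u=(-\Delta_{\bH}-\tfrac14)u$ with $u(0)=0$, $\partial_tu(0)=\varphi$. Its kernel is a radial function $A(t,r)$ of $r=d(x,y)$. By the spectral theorem for radial kernels (Selberg/Harish-Chandra transform), $A(t,\cdot)$ is the function whose spherical transform is $\rho\mapsto \sin(t\rho)/\rho$, where $\lambda=\tfrac14+\rho^2$. Because $\rho\mapsto\sin(t\rho)/\rho$ is the Fourier transform of $\tfrac12\boldsymbol{1}_{[-t,t]}$, the inverse Abel transform produces the kernel directly. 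If $g(u)=\tfrac12\boldsymbol{1}(|u|<t)$, then $Q(\cosh u)=g(u)$ and
\[
A(r)=-\frac{1}{\pi}\int_r^\infty \frac{dQ(\cosh u)}{\sqrt{2\cosh u-2\cosh r}}.
\]
The derivative of $Q$ is $-\tfrac12\delta$ at $u=t$, which gives $A(t,r)=\frac{1}{2\pi}\frac{\boldsymbol{1}(r<t)}{\sqrt{2(\cosh t-\cosh r)}}$. This agrees with $\frac{1}{2\sqrt2\pi}(\cosh t-\cosh r)^{-1/2}\boldsymbol{1}(t>r)$, the formula in Lemma~\ref{LE:kernel_of_unper_pro}. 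As a cross-check, I would verify directly that this function satisfies the radial wave equation $\partial_t^2A=\partial_r^2A+\coth r\,\partial_rA+\tfrac14A$ for $r<t$, and that $\int A(t,d(x,y))\,dy\to0$ and $\partial_t\int A\to$ a delta mass as $t\to0$. The normalising constant can be checked from the Euclidean small-$t$ limit, where the 2D wave kernel is $(2\pi)^{-1}(t^2-r^2)^{-1/2}$.

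Next I would periodize. For $f\in L^2(X)$, lift it to a $\Gamma$-periodic function $\tilde f$ on $\bH$. The solution on $X$ is the lift of the solution on $\bH$ with data $\tilde f$, by uniqueness and finite propagation speed. Here the main point is that $h(t,\cdot)$ is a bounded function on the spectrum and the wave equation is well-posed for locally $L^2$ data. This gives
\[
P_0(t)f(x)=\int_{\bH}A(t,d(x,y))\tilde f(y)\,dy=\int_D\sum_{\gamma}A(t,d(x,\gamma y))f(y)\,dy.
\]
The unfolding is justified because the sum is finite: by Lemma~\ref{LE:counting_loop_lemma}, only $O(e^{t})$ terms have $d(x,\gamma y)<t$. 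Each term has only an integrable singularity $(t-r)^{-1/2}$, so the sum is locally integrable.

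I expect the main obstacle to be rigour in identifying $h(t,-\Delta_X)$, defined by functional calculus, with the periodized wave solution. In particular, the spectrum below $\tfrac14$ makes $h$ grow like $\sinh$. I would handle this by checking the identity against eigenfunctions. If $-\Delta_X\psi=\lambda\psi$, then $\psi$ lifts to a $\Gamma$-periodic eigenfunction on $\bH$. The mean-value property for eigenfunctions (Selberg's theorem) then gives $\int_{\bH}A(t,d(x,y))\psi(y)\,dy=\hat A(\lambda)\psi(x)$, where $\hat A$ is the spherical transform. This transform equals $h(t,\lambda)$ for all $\lambda$, including $\lambda<\tfrac14$, by analytic continuation in $\rho$. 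The interchange of sum and integral is justified by the compact support of $A(t,\cdot)$. Since the eigenfunctions form an orthonormal basis of $L^2(X)$, the kernel identity follows.
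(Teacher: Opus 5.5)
The paper does not prove this lemma: it is imported from \cite{Hippi} (Proposition~6.1). Your argument is therefore a self-contained derivation rather than an alternative to one in the text, and it is correct.

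It follows the standard automorphic-kernel route:
\begin{enumerate}
\item compute the radial kernel on $\bH$ as the inverse Selberg/Harish-Chandra transform of $\rho\mapsto\sin(t\rho)/\rho$;
\item periodise over $\Gamma$;
\item identify the result with $h(t,-\Delta_X)$ through Selberg's mean-value theorem on an eigenbasis.
\end{enumerate}
What this buys over the citation is an independent check of the normalising constant $\frac{1}{2\sqrt2\pi}$. The paper's later constants depend on it (see the remark after Lemma~\ref{LE:Two_abel_kernel_int_est}).

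Your formal step $dQ=-\frac12\delta$ is best made rigorous by running the transform forwards. In the variable $u=\sinh^2(d/2)$, with $S=\sinh^2(t/2)$, one has $A=\frac{1}{4\pi}(S-u)^{-1/2}\boldsymbol{1}(u<S)$. Since $\int_v^S\frac{du}{\sqrt{(S-u)(u-v)}}=\pi$, $A$ is sent to $g=\frac12\boldsymbol{1}_{[-t,t]}$ and hence to $\sin(t\rho)/\rho$ for every complex $\rho$. This also covers $\lambda<\frac14$ without any continuation argument.

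Two points need tightening.

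\textbf{Boundedness of $K$.} Knowing $K\psi_j=h(t,\lambda_j)\psi_j$ on an orthonormal eigenbasis gives $K=P_0(t)$ only once $K$ is known to be bounded on $L^2(X)$. This is not automatic here, because $K$ is not Hilbert--Schmidt: $A(t,r)^2\sim(t-r)^{-1}$ is not integrable near $r=t$. Schur's test closes the gap. The kernel is nonnegative and symmetric, with $\sup_x\int_X K^\Gamma_t(x,y)\,dy=\int_{\bH}A(t,d(x,y))\,dy=2\sinh(t/2)=h(t,0)$.

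\textbf{Sign of the wave equation.} The shifted wave equation should read $\partial_t^2u=-(-\Delta_{\bH}-\frac14)u$. Your radial cross-check already uses the correct sign.

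Note also that the eigenbasis argument uses discreteness of the spectrum, i.e.\ compactness of $X$. That is the only case the paper needs. For non-compact strictly hyperbolic $\Gamma$ you would have to fall back on your first, finite-propagation-speed argument.
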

The function $A(t,r)$ from the above lemma is the Abel kernel and we will refer to it by that name in what follows. For later applications, the following small lemma on integrals in time for the function $h(t,\lambda)$ will be useful.

\begin{lemma}\label{LE:lower_bound_h(t,lambda)}
    Let $h:\R_+\times \R \mapsto \R$ be given by \eqref{EQ:def_h} and let $I=[a,b]\subset (\frac14,\infty)$ be a compact interval. Then there exists $T_0>0$ depending on $a$ and $b$ such that
    \begin{equation*}
        \inf_{\lambda\in I}\frac{1}{T} \int_0^T h^2(t,\lambda_0) \, d t \geq \frac{1}{3(b -\frac{1}{4})} \qquad\text{for all $T\geq T_0$}.
    \end{equation*}
\end{lemma}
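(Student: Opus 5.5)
For $\lambda\in I$ we write $\lambda=\frac14+\rho^2$ with $\rho\in[\rho_a,\rho_b]$, where $\rho_a=\sqrt{a-\frac14}>0$ and $\rho_b=\sqrt{b-\frac14}$. (The $\lambda_0$ in the displayed integrand should read $\lambda$.) The plan is to compute the time average explicitly and then show it converges uniformly on $I$. For $t\ge0$ we have $h(t,\lambda)=\rho^{-1}\sin(t\rho)$. Using $\sin^2 x=\tfrac12(1-\cos 2x)$ we obtain
\begin{equation*}
\frac1T\int_0^T h^2(t,\lambda)\,dt=\frac{1}{2\rho^2}\Big(1-\frac{\sin(2T\rho)}{2T\rho}\Big).
\end{equation*}

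Next we bound this from below uniformly. Since $\rho\le\rho_b$, the first factor satisfies $\frac{1}{2\rho^2}\ge\frac{1}{2(b-\frac14)}$. For the bracket we use $|\sin(2T\rho)|\le1$, which gives
\begin{equation*}
1-\frac{\sin(2T\rho)}{2T\rho}\ge 1-\frac{1}{2T\rho_a}.
\end{equation*}
We choose $T_0=\frac{3}{2\rho_a}=\frac{3}{2\sqrt{a-\frac14}}$. Then for every $T\ge T_0$ the bracket is at least $\frac23$.

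Multiplying the two bounds shows that, for all $T\ge T_0$ and all $\lambda\in I$, the time average is at least $\frac{1}{2(b-\frac14)}\cdot\frac23=\frac{1}{3(b-\frac14)}$. Taking the infimum over $\lambda\in I$ gives the claim. The only point requiring care is the uniformity near the lower end of $I$. This is exactly where the hypothesis $a>\frac14$ is used: it ensures $\rho_a>0$, so the oscillatory correction $\frac{\sin(2T\rho)}{2T\rho}$ is uniformly small once $T\ge T_0$. As a result, $T_0$ depends only on $a$, and the constant depends only on $b$. There is no real obstacle beyond this.
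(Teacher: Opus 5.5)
Your proof is correct and follows essentially the same route as the paper: compute the time average explicitly via $\sin^2 x=\frac12(1-\cos 2x)$, then bound the oscillatory term using $|\sin|\le 1$. The only difference is cosmetic. You factor out $\frac{1}{2\rho^2}$ and bound the bracket multiplicatively, which gives the explicit $T_0=\frac{3}{2\sqrt{a-\frac14}}$ depending only on $a$, whereas the paper subtracts $\frac{1}{4T(a-\frac14)^{3/2}}$ from $\frac{1}{2(b-\frac14)}$ and so gets a $T_0$ depending on both $a$ and $b$. Your closed form $\frac{1}{2\rho^2}-\frac{\sin(2T\rho)}{4T\rho^3}$ is also the correct version of the paper's intermediate expression, which contains a typo.
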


\begin{proof}
The proof is a simple calculation, where we will use the identity $\sin^2(t\theta) = \frac{1-\cos(2t\theta)}{2} $. For $\lambda>\frac{1}{4}$ we have
    \begin{equation*}
    \begin{aligned}
    \frac{1}{T} \int_0^T h^2(t,\lambda) \, d t  
    &=\frac{1}{T} \int_0^T \frac{\sin^2(t \sqrt{\lambda - \frac{1}{4}})}{(\lambda -\frac{1}{4})} \, d t 
    =  \frac{1}{T} \int_0^T \frac{1- \cos\big(2t\sqrt{\lambda - \frac{1}{4}})}{2(\lambda -\frac{1}{4})}  \, d t
    \\
    &=  \frac{1}{2(\lambda -\frac{1}{4})} -   \frac{\sin\big(4T\sqrt{\lambda - \frac{1}{4}})}{2T(\lambda - \frac{1}{4})^{\frac32}}  \geq \frac{1}{2(\lambda -\frac{1}{4})} -   \frac{1}{4T(\lambda - \frac{1}{4})^{\frac32}}. 
    \end{aligned}
\end{equation*}
This inequity gives us the following lower bound 
\begin{equation*}
        \inf_{\lambda\in I}\frac{1}{T} \int_0^T h^2(t,\lambda_0) \, d t \geq \frac{1}{2(b -\frac{1}{4})} -   \frac{1}{4T(a - \frac{1}{4})^{\frac32}} \geq \frac{1}{3(b -\frac{1}{4})} \qquad\text{for all $T\geq T_0$,}
\end{equation*}
where $T_0$ depends on the values of $a$ and $b$. This completes the proof. 
\end{proof}

The propagator discussed so far is the one we will use to prove point (1) Theorem~\ref{thm:main}. To prove points (2) and (3) in Theorem~\ref{thm:main} we will need to slightly modify one of the propagators used. This is the same modified propagator used in \cite{Hippi}. For a given $\tau \in \R$ we define the function $\tilde{h}_\tau \colon\R_+\times \R \to \R $ by 
\begin{equation}\label{EQ:def_tilde_h}
    \tilde{h}_{\tau}(t,\lambda) = \cos(\tau t) h(t,\lambda) = \cos(\tau t)  \big(\lambda-\tfrac{1}{4}\big)^{-\frac12}\sin\big(t\sqrt{\lambda-\tfrac14}\big).
\end{equation}
As above, we then define the family of operators $\widetilde{P}_V(t,\tau)$ by 
\begin{equation*}
    \widetilde{P}_V(t,\tau) = \tilde{h}_{\tau}(t,H_V) = \cos(\tau t) P_V(t).
\end{equation*}
From Lemma~\ref{LE:kernel_of_unper_pro} we also obtain a kernel expression for $\widetilde{P}_0(t,\tau)$ by simply multiplying the kernel of $P_0(t)$ by $\cos(\tau t)$. Moreover, since $\widetilde{P}_V(t,\tau) = \cos(\tau t) P_V(t)$ the operator $\widetilde{P}_V(t,\tau)$ will also satisfy a Duhamel formula
\begin{equation}\label{Duhamel_formula_main_tilde}
    \widetilde{P}_V(t,\tau) = \widetilde{P}_0(t,\tau) - \widetilde{Q}_V(t,\tau),
\end{equation}
where $\widetilde{P}_0(t,\tau) = \tilde{h}_\tau(t,-\Delta_X)$ and
\begin{equation}
    \widetilde{Q}_V(t,\tau) =   \cos(\tau t)\int_{0}^{t}  P_V(t_1) V P_0(t-t_1) \, dt_1.
\end{equation}
We conclude this subsection with the following small lemma on integrals in time for products of functions $h(t,\lambda)$ and $\tilde{h}_\tau(t,\lambda)$.    

\begin{lemma}\label{LE:lower_bound_tilde_h_and_h(t,lambda)}
    Let $\tau\in \R$ and let the functions $h,\tilde{h}_\tau:\R_+\times \R \mapsto \R$ be given by \eqref{EQ:def_h} and \eqref{EQ:def_tilde_h} respectively. Assume that $\frac{1}{4}<m<a,b < \infty$ and that 
    \begin{equation*}
        \sqrt{a-\tfrac14}-\sqrt{b-\tfrac{1}{4}}-\tau \in(-\delta,\delta),
    \end{equation*}
    where $\delta \in (0,\frac{2}{9}\sqrt{m-\frac14})$. Then for $T = \frac{\pi}{2\delta}$ it holds
    \begin{equation*}
        \frac{1}{T}\Bigg| \int_0^T \tilde{h}_\tau(t,a) h(t,b) \, d t \Bigg|^{-1} < 8\pi  \sqrt{a-\tfrac14}\sqrt{b-\tfrac{1}{4}}.
    \end{equation*}
\end{lemma}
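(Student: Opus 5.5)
We reduce everything to elementary trigonometric integrals. Write $\alpha=\sqrt{a-\tfrac14}$, $\beta=\sqrt{b-\tfrac14}$, $s=\sqrt{m-\tfrac14}$, so that $\alpha,\beta>s$, and put $\varepsilon=\alpha-\beta-\tau$, so $|\varepsilon|<\delta$. Since
\[
\tilde h_\tau(t,a)h(t,b)=\frac{\cos(\tau t)\sin(\alpha t)\sin(\beta t)}{\alpha\beta},
\]
the claim is equivalent to the lower bound
\[
\Big|\frac1T\int_0^T \cos(\tau t)\sin(\alpha t)\sin(\beta t)\,dt\Big|>\frac{1}{8\pi}.
\]
Applying the product-to-sum formulas twice gives
\[
\cos(\tau t)\sin(\alpha t)\sin(\beta t)=\tfrac14\big[\cos(\varepsilon t)+\cos(\nu t)-\cos(\mu_1 t)-\cos(\mu_2 t)\big],
\]
where $\nu=\alpha-\beta+\tau$, $\mu_1=\alpha+\beta-\tau=2\beta+\varepsilon$ and $\mu_2=\alpha+\beta+\tau=2\alpha-\varepsilon$. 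Each term then integrates explicitly, using $\frac1T\int_0^T\cos(\omega t)\,dt=\frac{\sin(\omega T)}{\omega T}$.

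\textbf{Main term.} Since $T=\frac{\pi}{2\delta}$, we have $|\varepsilon T|<\pi/2$. Hence $\frac{\sin(\varepsilon T)}{\varepsilon T}\ge \frac{2}{\pi}$, and this term contributes at least $\frac{1}{2\pi}$ to the integral.

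\textbf{The $\nu$ term.} This term is the delicate one, because $\nu\approx 2(\alpha-\beta)$ can be arbitrarily small or large, so it cannot be made small. I would not try to estimate it in absolute value. Instead I would use the universal lower bound $\frac{\sin x}{x}\ge -0.22$ (the global minimum of $\mathrm{sinc}$ is about $-0.2172$). This shows the $\nu$ term contributes at least $-\tfrac14\cdot 0.22\ge -0.055$ after the factor $\tfrac14$.

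\textbf{The $\mu_1,\mu_2$ terms.} Because $\delta<\frac29 s$, we get $\mu_i\ge 2s-\delta\ge \frac{16}{9}s$. Therefore
\[
\frac14\Big|\frac{\sin(\mu_iT)}{\mu_iT}\Big|\le\frac{1}{4\mu_iT}=\frac{\delta}{2\pi\mu_i}\le\frac{(2/9)s}{2\pi(16/9)s}=\frac{1}{16\pi},
\]
so the two terms together contribute at most $\frac{1}{8\pi}\approx0.040$ in absolute value.

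\textbf{Conclusion.} Summing the contributions,
\[
\frac1T\int_0^T\cos(\tau t)\sin(\alpha t)\sin(\beta t)\,dt\;\ge\;\frac{1}{2\pi}-0.055-\frac{1}{8\pi}\approx0.159-0.055-0.040>0.06>\frac{1}{8\pi}.
\]
Taking reciprocals and multiplying by $\alpha\beta$ gives the stated bound $<8\pi\alpha\beta$. The only step needing care is the sign handling of the $\nu$ term, which is resolved by the $\mathrm{sinc}$ minimum rather than by smallness. The rest is arithmetic checking that the constants $\frac29$ and $8\pi$ leave room.
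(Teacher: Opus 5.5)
Your proof is correct. The product-to-sum decomposition, the bound $\frac14\frac{\sin(\varepsilon T)}{\varepsilon T}>\frac{1}{2\pi}$, the estimate $\mu_i>\frac{16}{9}\sqrt{m-\frac14}$ giving at most $\frac{1}{16\pi}$ per oscillatory term, and the final arithmetic all check out. You also read the left-hand side as $\big(\frac1T|\int_0^T\cdots|\big)^{-1}$, which is how the lemma is used in the proof of Theorem~\ref{thm:main}.

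The paper gives no proof here and only cites \cite{Hippi}, so I cannot compare against its argument directly. Its constants $\frac29$ and $8\pi$, however, come out exactly from your decomposition if the $\nu$-term is bounded crudely by $\frac{\sin x}{x}>-\frac1\pi$: the term is nonnegative when $|\nu T|\le\pi$ and smaller than $\frac{1}{\pi}$ in modulus otherwise, so the total exceeds $\frac{1}{2\pi}-\frac{1}{4\pi}-\frac{1}{8\pi}=\frac{1}{8\pi}$. Your use of the true minimum of $\mathrm{sinc}$ is a slightly sharper variant of this and leaves some slack.
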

A proof of this lemma can be found in \cite{Hippi}, where it is Proposition~{4.1}.

\section{Quantitative bounds on Hilbert-Schmidt norms}\label{sec:Hilbert_Schmidt_norm_bounds}
In this section, we establish a number of estimates on Hilbert-Schmidt norms needed in our proof of the two main results. However, before considering the Hilbert-Schmidt norms, we will need some a priori integral estimates. Then for the estimates on the Hilbert-Schmidt norms we will firstly prove estimates that enable localisation to the ``thick'' part of the surface. Then we will establish estimates for the terms in which the potential appears. Next, we will use exponential mixing as in \cite{Hippi} to estimate the main free term. Finally, we will combine all three estimates into a single quantitative estimate needed for our analysis.  

\subsection{A priori integral bounds}
The first integral bound is a slight modification of \cite[Proposition 7.2]{Hippi}.

\begin{lemma}\label{LE:Two_abel_kernel_int_est}
Let $z,z'\in\bH$ and $t,t'\in \R_{+}$. Then we have the following
\begin{equation*}
    \int_{\bH} A(t,d(z,x))A(t',d(z',x)) \, d x \leq \frac{ \boldsymbol{1}_{[0,t+t']}(d(z,z'))}{\sqrt{\sinh (\max(|t-t'|,d(z,z')))}},
\end{equation*}
where $A(\tau,r)$ is the Abel kernel introduced in Lemma~\ref{LE:kernel_of_unper_pro}.
\end{lemma}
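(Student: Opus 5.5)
Let $\rho=d(z,z')$. The support claim is immediate: $A(t,d(z,x))\neq0$ forces $d(z,x)<t$, and $A(t',d(z',x))\neq0$ forces $d(z',x)<t'$. If both hold, the triangle inequality gives $\rho<t+t'$, and otherwise the integrand vanishes identically. So it suffices to bound the integral by $\sinh(\max(|t-t'|,\rho))^{-1/2}$ when $\rho\le t+t'$.

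For the quantitative bound I would use the spectral representation rather than a direct geometric computation. On $\bH$, $A(t,d(z,\cdot))$ is the kernel of $h(t,-\Delta_\bH)$ (Lemma~\ref{LE:kernel_of_unper_pro} with $\Gamma$ trivial). The integral is therefore the kernel at $(z,z')$ of $h(t,-\Delta_\bH)h(t',-\Delta_\bH)$, by self-adjointness and the symmetry of $A$. Writing $\sin(tr)\sin(t'r)=\tfrac12[\cos((t-t')r)-\cos((t+t')r)]$, we get
\[
h(t,\cdot)h(t',\cdot)=\tfrac12\int_{|t-t'|}^{t+t'} h(s,\cdot)\,ds,
\]
since $\partial_s\cos(sr)=-r\sin(sr)$ gives $\int_{|t-t'|}^{t+t'}\frac{\sin(sr)}{r}\,ds=\frac{\cos(|t-t'|r)-\cos((t+t')r)}{r^2}$. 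Hence, exactly,
\[
\int_\bH A(t,d(z,x))A(t',d(z',x))\,dx=\tfrac12\int_{|t-t'|}^{t+t'}A(s,\rho)\,ds
=\frac{1}{4\sqrt2\pi}\int_{\max(|t-t'|,\rho)}^{t+t'}\frac{ds}{\sqrt{\cosh s-\cosh\rho}}.
\]
Both sides are nonnegative, and $A$ is locally integrable, so the identity extends from test functions to the kernel by Fubini and monotone approximation. I expect justifying this kernel-level identity carefully to be the main technical point, though it is standard.

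It remains to bound the one-dimensional integral. Set $m=\max(|t-t'|,\rho)$.

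If $m=\rho$, substitute $s=\rho+u$ and use
\[
\cosh(\rho+u)-\cosh\rho\ge \sinh\rho\,\sinh u\quad\text{and}\quad \ge \sinh\rho\,(e^u-1)/2\cdot\ldots
\]
The integral is then at most $\sinh(\rho)^{-1/2}\int_0^\infty(\sinh u)^{-1/2}\,du$. The last integral is a finite absolute constant, roughly $3.7$.

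If $m=|t-t'|>\rho$, use instead
\[
\cosh s-\cosh\rho\ge\cosh s-\cosh m=2\sinh\tfrac{s+m}{2}\sinh\tfrac{s-m}{2}.
\]
Since $s\ge m$, this is at least $\sinh m\cdot 2\sinh\tfrac{s-m}{2}\cdot\tfrac{\ldots}{\ldots}$; more simply, $\cosh(m+u)-\cosh m\ge\sinh m\,\sinh u$. This gives the same bound with $\sinh m$ in place of $\sinh\rho$.

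In both cases the result is at most $\frac{C_0}{4\sqrt2\pi}\sinh(m)^{-1/2}$ with $C_0=\int_0^\infty(\sinh u)^{-1/2}du<4$. The prefactor is then below $1$, which yields the stated inequality with constant $1$. Checking the numerics of $C_0$ (via the Beta function) is the last routine step.
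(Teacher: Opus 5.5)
Your proof is correct, and it takes a genuinely different route from the paper. The paper does not prove this lemma itself. It imports \cite[Proposition 7.2]{Hippi}, which estimates the integral directly over the lens $B(z,t)\cap B(z',t')$ in hyperbolic polar coordinates, in the spirit of the proof of Lemma~\ref{LE:Two_sqr_sinh_singu_1}, and only the constant changes.

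You instead use the functional calculus for $-\Delta_{\bH}$ and the product-to-sum identity for $h$ to obtain the exact formula
\[
\int_{\bH}A(t,d(z,x))A(t',d(z',x))\,dx=\tfrac12\int_{|t-t'|}^{t+t'}A(s,d(z,z'))\,ds,
\]
after which everything is one-dimensional. Your two cases are really one: since $\rho\le m$, we have $\cosh s-\cosh\rho\ge\cosh s-\cosh m\ge\sinh m\,\sinh(s-m)$, so the garbled fragments can simply be deleted. The resulting constant is $C_0/(4\sqrt2\pi)\approx 0.21$, where $C_0=B(\tfrac14,\tfrac12)/\sqrt2\approx 3.71$.

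What your route buys is an identity rather than an inequality. The support condition and the sharp decay become transparent. So does the fact that the integral, and hence $F$ in \eqref{def_of_F}, depends only on $d(z,z')$, which the paper takes from \cite[Lemma 7.1]{Hippi}. There is also no case analysis on the relative position of the two discs.

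The price is that you rely on $A$ being exactly the kernel of $h(t,-\Delta_{\bH})$, and you need a measure-theoretic step that ``monotone approximation'' does not quite capture. Pairing with nonnegative $f,g\in C_c(\bH)$ and applying Tonelli only gives equality of the two kernels almost everywhere. To get the pointwise bound, combine three facts:
\begin{itemize}
\item both sides depend only on $\rho=d(z,z')$, by isometry invariance;
\item the right side is continuous in $\rho$;
\item the left side is lower semicontinuous by Fatou's lemma, since $r\mapsto A(t,r)$ is lower semicontinuous, including at $r=t$.
\end{itemize}
Together these show the left side is at most the right side for every $\rho$, which is all the lemma needs.

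The geometric proof in \cite{Hippi} needs no spectral input and no such a.e.-to-pointwise upgrade. It works with the explicit function $A$ alone.
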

The difference between this lemma and \cite[Proposition 7.2]{Hippi} is only in the constant. This is because here we have the factor $(2\sqrt2 \pi)^{-2}$ on the left-hand side. The proof is done by carefully using the hyperbolic geometric and polar coordinates.    

\begin{lemma}\label{LE:Two_sqr_sinh_singu_1}
Let $z,z'\in\bH$ and $t,t'\in \R_{+}$. Then we have the following
\begin{equation*}
    \int_{\bH} \frac{\boldsymbol{1}_{[0,t]}(d(x,z))\boldsymbol{1}_{[0,t']}(d(x,z'))}{\sqrt{\sinh(d(x,z)) }\sqrt{\sinh(d(x,z')) }} \, d x \leq  \boldsymbol{1}_{[0,t+t']}(d(z,z')) 4\pi \min(t,t').
\end{equation*}
\end{lemma}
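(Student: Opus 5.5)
The support statement is immediate: if the integrand is non-zero at some $x$, then $d(x,z)\le t$ and $d(x,z')\le t'$, so $d(z,z')\le t+t'$ by the triangle inequality. Hence the integral vanishes unless $d(z,z')\le t+t'$, and it suffices to bound the integral by $4\pi\min(t,t')$.

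By symmetry of the integrand in $(z,t)$ and $(z',t')$ we may assume $t\le t'$. The idea is to use the fact that the volume element in polar coordinates is $\sinh(r)\,dr\,d\theta$, so one factor $1/\sqrt{\sinh}$ is absorbed by the Jacobian. By Cauchy--Schwarz,
\begin{equation*}
\int_{\bH} \frac{\boldsymbol{1}_{[0,t]}(d(x,z))\boldsymbol{1}_{[0,t']}(d(x,z'))}{\sqrt{\sinh d(x,z)}\sqrt{\sinh d(x,z')}}\,dx \le \Big(\int_{B(z,t)\cap B(z',t')} \frac{dx}{\sinh d(x,z)}\Big)^{1/2}\Big(\int_{B(z,t)\cap B(z',t')} \frac{dx}{\sinh d(x,z')}\Big)^{1/2}.
\end{equation*}
The first factor is at most $\int_{B(z,t)}\frac{dx}{\sinh d(x,z)}=\int_0^{2\pi}\!\int_0^t dr\,d\theta = 2\pi t$, using polar coordinates around $z$. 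For the second factor we may not simply enlarge to $B(z',t')$, since that would give $2\pi t'$. Instead we restrict to $B(z,t)$ and use polar coordinates around $z'$: a hyperbolic ball of radius $t$ meets each geodesic ray from $z'$ in a segment of length at most $2t$, since the ray is a geodesic and the ball is geodesically convex with diameter $2t$. Consequently
\begin{equation*}
\int_{B(z,t)} \frac{dx}{\sinh d(x,z')}=\int_0^{2\pi}\!\!\int_0^\infty \boldsymbol{1}_{B(z,t)}(\cdot)\,dr\,d\theta\le 2\pi\cdot 2t .
\end{equation*}
Multiplying the two bounds gives $\sqrt{2\pi t\cdot 4\pi t}=2\sqrt2\,\pi t\le 4\pi t = 4\pi\min(t,t')$, which proves the claim.

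The only point requiring care is this chord-length argument, namely that a geodesic ray meets a ball of radius $t$ in a connected set of length at most $2t$. Connectedness follows from convexity of hyperbolic balls, and the length bound from the diameter being $2t$. If one prefers to avoid Cauchy--Schwarz, one can instead split the domain into $\{d(x,z)\le d(x,z')\}$ and its complement and bound $1/\sqrt{\sinh a\,\sinh b}\le 1/\sinh(\min(a,b))$ on each piece. On the first piece polar coordinates around $z$ give at most $2\pi t$. On the second piece the domain lies in $B(z,t)$, and the chord argument around $z'$ gives at most $4\pi t$. This yields the cruder constant $6\pi t$, which is why the Cauchy--Schwarz route is the one to use in order to land under $4\pi\min(t,t')$.
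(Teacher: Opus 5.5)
Your proof is correct, but it takes a different route from the paper's. The paper does not use Cauchy--Schwarz. It bounds the integrand pointwise by $1/\sinh\bigl(\min(d(x,z),d(x,z'))\bigr)$, enlarges the domain to the smaller of the two balls, and splits that ball according to which centre is nearer. A point of the smaller ball that is nearer to the other centre is automatically within $\min(t,t')$ of that centre too. So each piece lies in the ball of radius $\min(t,t')$ about its nearer centre, and polar coordinates about that centre give at most $2\pi\min(t,t')$ per piece, hence $4\pi\min(t,t')$ in total. The paper's separate cases $z=z'$ and $d(z,z')>2\min(t,t')$ are just instances of this split in which one piece is empty.

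You instead use Cauchy--Schwarz and control the factor centred at the centre of the larger ball by the chord-length bound. This removes the case analysis and gives the slightly better constant $2\sqrt{2}\,\pi$. The price is an extra geometric input about chords, which the paper's argument never needs. That input does not actually require convexity. If two points of a geodesic ray from $z'$, at parameters $r_1,r_2$, both lie in $B(z,t)$, then $|r_1-r_2|$ equals their distance, which is at most $2t$. So the admissible set of $r$ has measure at most $2t$ whether or not it is connected.

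Your closing remark is inaccurate, however. With $t\le t'$, on the piece where $d(x,z')<d(x,z)\le t$ you automatically have $x\in B(z',t)$. Polar coordinates about $z'$ therefore give $2\pi t$ there, not $4\pi t$, and the splitting route yields exactly $4\pi\min(t,t')$. That is precisely the paper's proof, so Cauchy--Schwarz is not needed to reach the stated constant.
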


\begin{proof}
 By symmetry, we may without loss of generality assume that $t'\leq t$. Moreover, due to the presence of the two indicator functions, we see that the integral is zero if $d(z,z')>t+t'$.

Suppose that $z=z'$. Then by switching to polar coordinates centred at z we obtain the following 
\begin{equation}\label{EQ:Two_sqr_sinh_singu_1}
    \begin{aligned}
        \int_{\bH} \frac{\boldsymbol{1}(d(x,z)<t')}{\sinh(d(x,z)) } \, d x = 2\pi   \int_0^{t'} 1 \, d r = 2\pi t'. 
    \end{aligned}
\end{equation}

Suppose that $0<d(z,z') \leq 2t'$. We first extend the area of integration to the entire ball $B(z',t')$ and split this integral into two parts depending on whether $x$ is closer to $z$ or $z'$.
\begin{equation}\label{EQ:Two_sqr_sinh_singu_2}
    \begin{aligned}
        \int_{\bH} \frac{\boldsymbol{1}(d(x,z)<t)\boldsymbol{1}(d(x,z')<t')}{\sqrt{\sinh(d(x,z)) }\sqrt{\sinh(d(x,z')) }} \, d x  
        \leq{}& \int_{B(z',t')} \frac{1}{\sqrt{\sinh(d(x,z)) }\sqrt{\sinh(d(x,z')) }} \, d x  
        \\
        \leq{}&\int_{D_1} \frac{1}{\sinh(d(x,z))} \, d x + \int_{D_2} \frac{1}{\sinh(d(x,z')) } \, d x,
    \end{aligned}
\end{equation}
where $B(z',t')$ is the ball centered at $z'$ with radius $t'$ and the sets $D_1$ and $D_2$ are given by
\begin{equation*}
    \begin{aligned}
        D_1 = B(z',t')\cap \big\{ x \in \bH \,\big| \, d(x,z) < d(x,z')   \big\} \qquad\text{and}\qquad D_2 = B(z',t')\cap \big\{ x \in \bH \,\big|  \,d(x,z) \geq d(x,z')   \big\}.
    \end{aligned}
\end{equation*}
For each of the two integrals on the right-hand side, we switch polar coordinates centred at $z$ and $z'$ respectively and obtain
\begin{equation}\label{EQ:Two_sqr_sinh_singu_3}
    \begin{aligned}
          \int_{D_1} \frac{1}{\sinh(d(x,z))} \, d x + \int_{D_2} \frac{1}{\sinh(d(x,z')) } \, d x
          \leq 4\pi \int_0^{t'} 1 dr = 4\pi t'
    \end{aligned}
\end{equation}

Suppose that $2t'<d(z,z') \leq t+t'$. For this case, we find that if $x\in B(z',t)$, then we have $d(x,z)\geq d(x,z')$. Hence, if we switch to polar coordinates centred at $z'$ we find that
\begin{equation}\label{EQ:Two_sqr_sinh_singu_4}
    \begin{aligned}
        \int_{\bH} \frac{\boldsymbol{1}(d(x,z)<t)\boldsymbol{1}(d(x,z')<t')}{\sqrt{\sinh(d(x,z)) }\sqrt{\sinh(d(x,z')) }} \, d x  \leq{}&  \int_0^{2\pi} \int_0^{t'} \frac{\sinh(r)}{\sqrt{\sinh(R(r,\theta)) \sinh(r)}} \, d r d\theta 
        \\
        \leq{}& 2\pi   \int_0^{t'} 1 \, d r = 2\pi t', 
    \end{aligned}
\end{equation}
where $R(r,\theta)$ is $d(x,z)$ written in these polar coordinates. Combining the estimates in \eqref{EQ:Two_sqr_sinh_singu_1}, \eqref{EQ:Two_sqr_sinh_singu_2}, \eqref{EQ:Two_sqr_sinh_singu_3}, and \eqref{EQ:Two_sqr_sinh_singu_4} we obtain the state estimate, and this concludes the proof. 
\end{proof}

\subsection{Restriction to thick parts of the surface} 

Here, we establish that it will be sufficient to consider the propagated observable on the thick part of the surface. Recall the notation
\begin{equation*}
    X(> 2T) \coloneqq \{ x\in X \,|\, \InjRad_X(x) \geq 2T \} \qquad\text{and}\qquad X(\leq 2T) = X\setminus X(> 2T),
\end{equation*}
for all $T>0$. Moreover, we will use similar notation for the fundamental domain of $X$. Before we prove the main result of this section, we will need an auxiliary lemma on Hilbert-Schmidt norm bounds of free propagation. 

\begin{lemma}\label{LE:free_propagation_restriction_1}
    Let $X=\Gamma\setminus \bH$, where $\Gamma$ be a strictly hyperbolic Fuchsian group, and let $\tau\in\R$. Let $P_0(t)$ and $\widetilde{P}_0(t,\tau)$ be the free and modified free wave propagator associated with $-\Delta_X$. Then for all $a\in L^\infty(X)$, $0<r\leq \min(1, \frac{\InjRad_X}{2})$ and all $T>0$ we have for any $t,t'\in \R^+$ such that $\max(t,t')\leq T$ the estimates
    \begin{equation*}
        \big\lVert  P_0(t) a \boldsymbol{1}_{X(\leq2T)} P_0(t')   \big\rVert_{\bHS}^2 \leq \frac{8\pi T e^{4T+1} \lVert a \rVert^2_{L^\infty(X)}}{r^2} \Vol(X(\leq2T)),
    \end{equation*}
    and
    \begin{equation*}
        \big\lVert  \widetilde{P}_0(t,\tau) a \boldsymbol{1}_{X(\leq2T)} P_0(t')   \big\rVert_{\bHS}^2 \leq \frac{8\pi T e^{4T+1} \lVert a \rVert^2_{L^\infty(X)}}{r^2} \Vol(X(\leq2T)).
    \end{equation*}
\end{lemma}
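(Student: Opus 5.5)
We write the Hilbert–Schmidt norm through the integral kernels and reduce everything to the a priori bound of Lemma~\ref{LE:Two_sqr_sinh_singu_1} together with the lattice count of Lemma~\ref{LE:counting_loop_lemma}. Since $|\cos(\tau t)|\le 1$, the kernel of $\widetilde{P}_0(t,\tau)$ is pointwise bounded in absolute value by that of $P_0(t)$. The second estimate therefore follows verbatim from the first, and it suffices to treat $P_0(t) a \boldsymbol{1}_{X(\leq2T)} P_0(t')$.

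By Lemma~\ref{LE:kernel_of_unper_pro}, working on a fundamental domain $D$, the operator has kernel
\[
\mathcal{K}(x,y)=\int_{D(\leq 2T)} K^\Gamma_t(x,z)\,a(z)\,K^\Gamma_{t'}(z,y)\,dz .
\]
Hence
\[
\|\cdot\|_{\bHS}^2=\int_D\int_D|\mathcal K(x,y)|^2\,dx\,dy\le \|a\|_\infty^2\int_{D(\le 2T)}\int_{D(\le2T)} \Big(\int_D K^\Gamma_t(x,z)K^\Gamma_t(x,w)\,dx\Big)\Big(\int_D K^\Gamma_{t'}(z,y)K^\Gamma_{t'}(w,y)\,dy\Big)\,dz\,dw,
\]
using that the kernels are nonnegative. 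Rather than expand both factors, I would estimate one of them, say the $x$-integral, crudely and keep the other.

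For the $x$-integral, unfold $\int_D\sum_{\gamma}\sum_{\gamma'}$ into $\int_{\bH}\sum_{\gamma'}A(t,d(x,z))A(t,d(x,\gamma' w))$. Lemma~\ref{LE:Two_abel_kernel_int_est} with $t=t'$ bounds each term by $\boldsymbol 1(d(z,\gamma' w)\le 2t)/\sqrt{\sinh d(z,\gamma'w)}$. Alternatively, one can bound $A(t,r)$ and use Lemma~\ref{LE:Two_sqr_sinh_singu_1}; either way the result is a summable singularity.

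Next, integrate this in $w$ over $D$ and refold to $\bH$: $\int_{\bH}\boldsymbol 1(d(z,w)\le 2T)\,(\sinh d(z,w))^{-1/2}\,dw\lesssim e^{T}$. The remaining $y$-factor is bounded uniformly by $\sup_{z,w}\int_D K_{t'}(z,y)K_{t'}(w,y)\,dy\le \sum_{\gamma}(\text{bounded})$, where the number of contributing $\gamma$ (those with $d(z,\gamma w)\le 2T$) is at most $e^{2T+1}/r^2$ by Lemma~\ref{LE:counting_loop_lemma}. Collecting these gives a factor $e^{4T+1}/r^2$, with the factor $T$ coming from Lemma~\ref{LE:Two_sqr_sinh_singu_1} ($4\pi\min(t,t')\le 4\pi T$) and the remaining $z$-integral over $D(\le 2T)$ producing $\Vol(X(\le 2T))$.

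The main obstacle is the bookkeeping of the two lattice sums. The plan is to keep exactly one sum unfolded into an integral over $\bH$, which gives the $T$ or $e^{T}$ factor, and to count the other with Lemma~\ref{LE:counting_loop_lemma}. The lattice count has to be applied to the sum and not to an integral, otherwise one loses the $\Vol(X(\le2T))$ factor. Handling the $1/\sqrt{\cosh t-\cosh r}$ singularity is the other delicate point. It is avoided by always passing through Lemma~\ref{LE:Two_abel_kernel_int_est}, whose right-hand side is at most $\boldsymbol 1(d\le 2T)(\sinh d)^{-1/2}$, which is integrable in polar coordinates. Matching the exact constant $8\pi$ requires tracking $2\pi$ from polar coordinates times the $4\pi$ or $2\pi t$ bounds; I expect only this constant, not the structure, to need adjustment.
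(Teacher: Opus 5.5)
Your reduction to $P_0$ via $|\cos(\tau t)|\le 1$ is fine. So is your first bound of the Hilbert--Schmidt norm by $\norm{a}_{L^\infty(X)}^2\int_{D(\le2T)}\int_{D(\le2T)}(\cdots)_x(\cdots)_y\,dz\,dw$. The proof breaks at the step where you bound the $y$-factor by $\sup_{z,w}\int_D K^\Gamma_{t'}(z,y)K^\Gamma_{t'}(w,y)\,dy$: this supremum is $+\infty$.

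After unfolding, the factor equals $\sum_\gamma\int_{\bH}A(t',d(y,z))A(t',d(y,\gamma w))\,dy$. Lemma~\ref{LE:Two_abel_kernel_int_est} with equal times only bounds each term by $\boldsymbol{1}(d(z,\gamma w)\le 2t')(\sinh d(z,\gamma w))^{-1/2}$, which is unbounded. This is not a weakness of the lemma. As $w\to z$, by Fatou the identity term is at least
\[
\int_{\bH}A(t',d(y,z))^2\,dy=\frac{1}{4\pi}\int_0^{t'}\frac{\sinh r}{\cosh t'-\cosh r}\,dr=+\infty .
\]
The reason is that the squared Abel kernel has a non-integrable singularity on the light cone $r=t'$; equivalently, the kernel of $P_0(t')$ is not square integrable in one variable. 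So the split ``integrate the $x$-factor, take the sup of the $y$-factor'' cannot give a finite bound.

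Your alternative of bounding $A(t,r)$ and invoking Lemma~\ref{LE:Two_sqr_sinh_singu_1} fails as well. $A(t,r)$ is singular at $r=t$, not at $r=0$, so it is not dominated by $(\sinh r)^{-1/2}$.

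The missing idea is that the two singular factors must be integrated \emph{against each other} in a common free variable. The paper's route is as follows.
\begin{enumerate}
\item Unfold all four lattice sums so that $x$, $y$ and one middle variable $w'$ range over $\bH$, leaving a single sum over $\gamma$.
\item Apply Lemma~\ref{LE:Two_abel_kernel_int_est} to the $x$- and $y$-integrals. This gives the product $(\sinh d(w,w'))^{-1/2}(\sinh d(\gamma w,w'))^{-1/2}$ with indicators $d(w,w')\le 2t$ and $d(\gamma w,w')\le 2t'$.
\item Integrate this product in $w'$ over $\bH$ using Lemma~\ref{LE:Two_sqr_sinh_singu_1}. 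This gives a bound $8\pi\min(t,t')\le 8\pi T$, ultimately because $\int_{B(w,s)}(\sinh d(w,w'))^{-1}\,dw'=2\pi s$. It also gives the constraint $d(w,\gamma w)\le 2(t+t')\le 4T$.
\item Only then count the surviving $\gamma$ with Lemma~\ref{LE:counting_loop_lemma}, giving at most $e^{4T+1}/r^2$.
\item The remaining $w$-integral over $D(\le 2T)$ gives $\Vol(X(\le 2T))$.
\end{enumerate}

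In your chain, Lemma~\ref{LE:Two_sqr_sinh_singu_1} never actually enters. Your factors are $e^{T}$ from the $w$-integral, times $e^{2T+1}/r^2$ from the count, times an infinite supremum. So the problem is the structure of the estimate, not just the constant $8\pi$.
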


\begin{proof}
Firstly, we notice that since $\widetilde{P}_0(t,\tau) = \cos(\tau t) P_0(t)$ we have
\begin{equation*}
     \big\lVert  \widetilde{P}_0(t,\tau) a \boldsymbol{1}_{X(\leq2T)} P_0(t')   \big\rVert_{\bHS}^2 \leq  \big\lVert  P_0(t) a \boldsymbol{1}_{X(\leq2T)} P_0(t')   \big\rVert_{\bHS}^2.
\end{equation*}
Hence, we only need to establish the bound for $ \big\lVert  P_0(t) a \boldsymbol{1}_{X(\leq2T)} P_0(t')   \big\rVert_{\bHS}^2$. Using that $P_0(t)$ has an explicit kernel that is positive, we obtain
\begin{equation}\label{EQ:free_propagation_restriction_1_1}
    \begin{aligned}
         \big\lVert  P_0(t) a \boldsymbol{1}_{X(\leq2T)} P_0(t')   \big\rVert_{\bHS}^2 
         ={}& \int_D \int_D \Big| \sum_{\gamma_1,\gamma_2\in\Gamma} \int_{X(\leq2T)} A(t,d(x,\gamma_1w)) a(w) A(t',d(y,\gamma_2 w)) \, d w  \Big|^2 \, d xd y
         \\
         \leq{}& \norm{a}^2_{L^\infty(X)} \int_{D^3}  \int_{D(\leq2T)} \sum_{\gamma_1,\gamma_2,\gamma_3,\gamma_4\in\Gamma}  A(t,d(x,\gamma_1w)) A(t',d(y,\gamma_2 w))
         \\
         &\phantom{\norm{a}^2_{L^\infty(X)} }{} \times  A(t,d(x,\gamma_3w')) A(t',d(y,\gamma_4 w')) \, d w  d w' d xd y,
    \end{aligned}
\end{equation}  
where we could also have restricted the integration domain for $w'$ to $D(\leq2T)$. Using that the fundamental domain tiles the hyperbolic plane we obtain
\begin{equation}\label{EQ:free_propagation_restriction_1_2}
    \begin{aligned}
          \MoveEqLeft \int_{D^3}  \int_{D(\leq2T)}  \sum_{\gamma_1,\gamma_2,\gamma_3,\gamma_4\in\Gamma}  A(t,d(x,\gamma_1w)) A(t',d(y,\gamma_2 w))
          A(t,d(x,\gamma_3w')) A(t',d(y,\gamma_4 w')) \, d w  d w' d xd y
          \\
          ={}& \int_{\bH^3}  \int_{D(\leq2T)}  \sum_{\gamma \in\Gamma}  A(t,d(x,w)) A(t',d(y,\gamma w))
          A(t,d(x,w')) A(t',d(y,w')) \, d w  d w' d xd y.
    \end{aligned}
\end{equation}
Now, applying Lemma~\ref{LE:Two_abel_kernel_int_est} to the integrals in $x$ and $y$ we obtain the following estimate 
\begin{equation}\label{EQ:free_propagation_restriction_1_3}
    \begin{aligned}
          \MoveEqLeft \int_{\bH^3}  \int_{D(\leq2T)}  \sum_{\gamma \in\Gamma}  A(t,d(x,w)) A(t',d(y,\gamma w))
          A(t,d(x,w')) A(t',d(y,w')) \, d w  d w' d xd y
          \\
          \leq{}& \int_{\bH}  \int_{D(\leq2T)}  \sum_{\gamma \in\Gamma} \frac{ \boldsymbol{1}_{[0,t+t']}(d(\gamma w, w'))\boldsymbol{1}_{[0,t+t']}(d(w,w')) }{\sqrt{\sinh (d(w,\gamma w')) \sinh (d(w,w'))}}   \, d w  d w'. 
    \end{aligned}
\end{equation}
Then applying Lemma~\ref{LE:Two_sqr_sinh_singu_1} to the integral in $w'$ and Lemma~\ref{LE:counting_loop_lemma} to control the sum over $\Gamma$ we obtain  
\begin{equation}\label{EQ:free_propagation_restriction_1_4}
          \int_{\bH}  \int_{D(\leq2T)}  \sum_{\gamma \in\Gamma} \frac{ \boldsymbol{1}_{[0,t+t']}(d(\gamma w, w'))\boldsymbol{1}_{[0,t+t']}(d(w,w')) }{\sqrt{\sinh (d(\gamma w, w')) \sinh (d(w,w'))}}   \, d w  d w' 
          \leq   \frac{4 \pi e^{2(t+t')+1 }(t+t')}{ r^2} \Vol(  D(\leq2T)).
\end{equation}
By combining the estimates in \eqref{EQ:free_propagation_restriction_1_1}-\eqref{EQ:free_propagation_restriction_1_4}, using the assumption that $\max(t,t')\leq T$, and $ \Vol(D(\leq2T)) =  \Vol(  X(\leq2T))$, we obtain the stated estimate. This concludes the proof.
\end{proof}

\begin{lemma}\label{LE:lifting_type_lemma}
    Let $X=\Gamma\setminus \bH$, where $\Gamma$ be a strictly hyperbolic Fuchsian group, and let $\tau\in\R$. For a $V\in L^\infty(X)$ consider the Schr\"odinger operator $H_V = -\Delta_X + V$ and for $t\geq 0$ the associated wave and modified wave propagator $P_V(t)$ and $\widetilde{P}_V(t,\tau)$. Moreover, suppose that $I\subset (\frac{1}{4},\infty)$ is a fixed compact interval. Then for all $a\in L^\infty(X)$, $0<r\leq \min(1, \frac{\InjRad_X}{2})$ and all $T\geq1$ we have
    \begin{equation*}
    \begin{aligned}
       \MoveEqLeft \Big\lVert \int_0^T \Pi_I(H_V) P_V(t) a P_V(t) \Pi_I(H_V) \, d t \Big\rVert_{\bHS}^2 
       \\
       \leq{}& 2\Big\lVert \int_0^T \Pi_I(H_V) P_V(t) a \boldsymbol{1}_{X(>2T)} P_V(t) \Pi_I(H_V) \, d t  \Big\rVert_{\bHS}^2 
        \\
        &+ \frac{C_I T^7 e^{4T+1}\max(1, \lVert V\rVert_{L^\infty(X)}^4) \lVert a \rVert^2_{L^\infty(X)}}{r^2} \Vol(X(\leq2T)), 
        \end{aligned}
    \end{equation*}
    and 
    \begin{equation*}
    \begin{aligned}
       \MoveEqLeft \Big\lVert \int_0^T \Pi_I(H_V) \widetilde{P}_V(t,\tau) a P_V(t) \Pi_I(H_V) \, d t \Big\rVert_{\bHS}^2 
       \\
       \leq{}& 2\Big\lVert \int_0^T \Pi_I(H_V) \widetilde{P}_V(t,\tau) a \boldsymbol{1}_{X(>2T)} P_V(t) \Pi_I(H_V) \, d t  \Big\rVert_{\bHS}^2 
        \\
        &+ \frac{C_I T^7 e^{4T+1}\max(1, \lVert V\rVert_{L^\infty(X)}^4) \lVert a \rVert^2_{L^\infty(X)}}{r^2} \Vol(X(\leq2T)). 
        \end{aligned}
    \end{equation*}
    In both cases, the constant $C_I = 16 \pi (\max(1,(\min(I)-\frac14)^{-1})$.
\end{lemma}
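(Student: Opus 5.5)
We split $a = a\boldsymbol{1}_{X(>2T)} + a\boldsymbol{1}_{X(\leq 2T)}$ and use $\|A+B\|_{\bHS}^2 \leq 2\|A\|_{\bHS}^2 + 2\|B\|_{\bHS}^2$. The first piece gives exactly the main term on the right-hand side. It then remains to show that
\[
\Big\lVert \int_0^T \Pi_I(H_V) P_V(t) a \boldsymbol{1}_{X(\leq 2T)} P_V(t) \Pi_I(H_V)\,dt \Big\rVert_{\bHS}^2
\]
is bounded by half the stated error term. The same argument covers the $\widetilde P_V$ version, since $|\cos(\tau t)|\leq 1$ and the factor $\cos(\tau t)$ can be carried along or dropped at each step.

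First, since $\Pi_I(H_V)$ is a projection, we have $\|\Pi_I B \Pi_I\|_{\bHS}\leq \|B\|_{\bHS}$. By Minkowski and Cauchy–Schwarz in $t$,
\[
\Big\|\int_0^T B(t)\,dt\Big\|_{\bHS}^2 \leq T\int_0^T \|B(t)\|_{\bHS}^2\,dt .
\]
This reduces the problem to bounding $\|P_V(t)\, a\boldsymbol{1}_{X(\leq 2T)}\, P_V(t)\|_{\bHS}^2$ uniformly for $t\leq T$.

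Here we cannot use a kernel for $P_V$, so we apply the Duhamel formula \eqref{Duhamel_formula_main} on both sides, writing $P_V(t)=P_0(t)-Q_V(t)$. This produces four terms: $P_0\, a\boldsymbol 1\, P_0$, two cross terms, and $Q_V\, a\boldsymbol 1\, Q_V$. The term $P_0(t)\, a\boldsymbol 1_{X(\leq 2T)}\, P_0(t)$ is handled directly by Lemma~\ref{LE:free_propagation_restriction_1}.

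For a term containing $Q_V(t)=\int_0^t P_V(t_1) V P_0(t-t_1)\,dt_1$, we bound the factor $P_V(t_1)$ in operator norm. Since $P_V(t_1)$ is not restricted to $I$, a crude estimate of its norm would involve the bottom of the spectrum. The better route is to keep the outer projections: $\Pi_I(H_V) P_V(t_1) = h(t_1,H_V)\Pi_I(H_V)$ has operator norm at most $(\min I-\tfrac14)^{-1/2}$, and this is where $C_I$ enters. The inner $P_V$ factors cannot be paired with the projection in this way, so we iterate Duhamel once more or reorder the factors so that every $P_V$ sits next to a $\Pi_I$.

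After this, each term has the form (bounded operator) $\times\, V P_0(s)\, a\boldsymbol 1_{X(\leq 2T)}\, P_0(s')\, V \times$ (bounded operator), with $s,s'\leq T$. Using $\|B_1 K B_2\|_{\bHS}\leq\|B_1\|\,\|K\|_{\bHS}\,\|B_2\|$ and $\|V\|\leq\|V\|_\infty$, Lemma~\ref{LE:free_propagation_restriction_1} bounds $\|P_0(s)\, a\boldsymbol 1\, P_0(s')\|_{\bHS}^2$ by $8\pi T e^{4T+1}\|a\|_\infty^2 r^{-2}\Vol(X(\leq 2T))$. The time integrals in $t_1$ and $t_1'$, squared and combined via Cauchy–Schwarz, contribute further powers of $T$. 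Together with the outer factor $T\cdot T$, they give the polynomial prefactor $T^7$. The largest power of $V$ that appears is $\|V\|_\infty^4$ after squaring, and all cases are absorbed by $\max(1,\|V\|_\infty^4)$.

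The main obstacle is controlling the $P_V$ factors that Duhamel leaves behind and that are not adjacent to $\Pi_I$. If reordering fails, the fallback is the norm bound $\|P_V(t_1)\|\leq t_1$, valid for the part of the spectrum above $\tfrac14$. Combined with iterating Duhamel so that each $P_V$ appears only next to a $\Pi_I$, this keeps every constant polynomial in $T$ and depending only on $I$.
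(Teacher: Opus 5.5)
Your route is the paper's. You split off $a\boldsymbol{1}_{X(>2T)}$ using $\|A+B\|_{\bHS}^2\le 2\|A\|_{\bHS}^2+2\|B\|_{\bHS}^2$. You then expand both propagators by Duhamel into four terms and bound every thin-part norm $\|P_0(s)\,\tilde a\,P_0(s')\|_{\bHS}$, with $\tilde a=a\boldsymbol{1}_{X(\leq 2T)}$, by Lemma~\ref{LE:free_propagation_restriction_1}. Each remaining $P_V$ is absorbed via $\|\Pi_I(H_V)P_V(t_1)\|\le(\min I-\tfrac14)^{-1/2}$. Your power counting is correct: $T^2$ from the outer time integral, $T$ from the lemma and $T^4$ from the two Duhamel integrals. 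Using Cauchy--Schwarz in $t$ instead of the paper's Minkowski inequality only changes numerical constants.

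As written, though, the proposal contains steps that would fail, and the step that actually makes the argument work appears only as one option among several.
\begin{itemize}
\item \emph{Keep the projections.} You must not drop $\Pi_I(H_V)$ before expanding; the paper drops it only for the pure term $P_0(t)\tilde aP_0(t)$. On all of $L^2(X)$ one has $\|P_V(t_1)\|=\sup_{\lambda\in\sigma(H_V)}|h(t_1,\lambda)|$, and $H_V$ in general has spectrum below $\tfrac14$. For $V=0$, the eigenvalue $0$ gives $\|P_0(t_1)\|=2\sinh(t_1/2)$. So your fallback $\|P_V(t_1)\|\le t_1$ is false, and iterating Duhamel cannot help, since every iteration leaves a $P_V$ factor behind.
\item \emph{There are no ``inner'' $P_V$ factors.} They only appear if you expand the right-hand factor with the same ordering as the left. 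Since $P_V(t)$ and $P_0(t)$ are self-adjoint, $P_V(t)=P_0(t)-Q_V(t)^*$ with $Q_V(t)^*=\int_0^tP_0(t-t_1)VP_V(t_1)\,dt_1$. Writing $P_V(t)\tilde aP_V(t)=(P_0(t)-Q_V(t))\,\tilde a\,(P_0(t)-Q_V(t)^*)$, as the paper does, every $P_V$ sits next to a projection automatically. The cross term is $\Pi_I(H_V)P_0(t)\tilde aP_0(t-t_1)VP_V(t_1)\Pi_I(H_V)$ (note only one $V$), and the last term is $\Pi_I(H_V)P_V(t_1)VP_0(t-t_1)\tilde aP_0(t-t_2)VP_V(t_2)\Pi_I(H_V)$. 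This adjoint form is not an optional reordering; it is the step the proof rests on.
\item \emph{The constant.} Do not try to reproduce the literal $C_I=16\pi\max(1,(\min I-\tfrac14)^{-1})$. The last term already carries $(\min I-\tfrac14)^{-1}$ before squaring. Both your bookkeeping and the paper's own intermediate estimates therefore give a constant of order $\max(1,(\min I-\tfrac14)^{-2})$, with a larger numerical factor. Only the fact that it depends on $I$ alone is used later.
\end{itemize}
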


\begin{proof}
 The proof of the two bounds are analogous and we will only give a proof for the first bound. Since the sets $X(>2T)$ and $X(\leq 2T)$ are disjoint, we have
        \begin{equation}\label{EQ:lifting_type_lemma_1}
        \begin{aligned}
            \Big\lVert \int_0^T \Pi_I(H_V)P_V(t) a P_V(t)\Pi_I(H_V) \,  d t \Big\rVert_{\bHS}^2 
            \leq{}& 2\Big\lVert \int_0^T \Pi_I(H_V)P_V(t) a \boldsymbol{1}_{X(>2T)} P_V(t) \Pi_I(H_V)\,  d t  \Big\rVert_{\bHS}^2 
            \\
            &+ 2\Big\lVert \int_0^T \Pi_I(H_V)P_V(t) a \boldsymbol{1}_{X(\leq2T)} P_V(t) \Pi_I(H_V)\, d t  \Big\rVert_{\bHS}^2.  
        \end{aligned}
    \end{equation}
    Hence, we just need to bound the second term on the right-hand side. In the following, we let $\tilde{a} = a \boldsymbol{1}_{X(\leq2T)}$. Using Duhamel's formula \eqref{Duhamel_formula_main} for $P_V(t)$, we have
    \begin{equation}\label{EQ:lifting_type_lemma_2}
    \begin{aligned}
        \MoveEqLeft \Big\lVert \int_0^T \Pi_I(H_V)P_V(t) \tilde{a} P_V(t)\Pi_I(H_V) \,  d t  \Big\rVert_{\bHS}^2 
        \\
        ={}& \Big\lVert \int_0^T \Pi_I(H_V)(P_0(t) -Q_V(t)) \tilde{a} (P_0(t) -Q_V^{*}(t))\Pi_I(H_V) \,  d t  \Big\rVert_{\bHS}^2
        \\
        \leq{}& \Bigg( \Big\lVert \int_0^T \Pi_I(H_V)P_0(t)  \tilde{a} P_0(t) \Pi_I(H_V) \,  d t  \Big\rVert_{\bHS}  + \Big\lVert \int_0^T \Pi_I(H_V) P_0(t)  \tilde{a}  Q_V^{*}(t)\Pi_I(H_V) \,  d t  \Big\rVert_{\bHS}
        \\
        &+\Big\lVert \int_0^T \Pi_I(H_V)Q_V(t) \tilde{a} P_0(t) \Pi_I(H_V) \,  d t  \Big\rVert_{\bHS} + \Big\lVert \int_0^T \Pi_I(H_V)Q_V(t) \tilde{a} Q_V^{*}(t)\Pi_I(H_V) \,  d t  \Big\rVert_{\bHS} \Bigg)^2.
    \end{aligned} 
    \end{equation}
    We will now estimate each of the terms on the right-hand side separately. For the first term, using Lemma~\ref{LE:free_propagation_restriction_1} we obtain the following estimate
    \begin{equation}\label{EQ:lifting_type_lemma_3}
    \begin{aligned}
         \Big\lVert \int_0^T \Pi_I(H_V)P_0(t) \tilde{a} P_0(t) \Pi_I(H_V)\, d t \Big\rVert_{\bHS} 
        \leq{}&  \int_0^T \big\lVert P_0(t) \tilde{a} P_0(t) \big\rVert_{\bHS} \, d t 
        \\
        \leq{}& \frac{ \sqrt{8\pi} T^{\frac{3}{2}} e^{2T+\frac{1}{2}} \lVert a \rVert_{L^\infty(X)}}{r} \sqrt{\Vol(X(\leq2T))}.
    \end{aligned} 
    \end{equation}
Recall that
    \begin{equation*}
    Q_V(t) =  \int_{0}^{t}  P_V(t_1) V  P_0(t-t_1) \, dt_1,
    \end{equation*}
    we obtain for the second term the estimate
    \begin{equation}\label{EQ:lifting_type_lemma_4}
    \begin{aligned}
        \MoveEqLeft  \Big\lVert \int_0^T \Pi_I(H_V) P_0(t)  \tilde{a}  Q_V^{*}(t)\Pi_I(H_V) \,  d t  \Big\rVert_{\bHS}
         \\
         &\leq \tilde{C}_I \norm{V}_{L^\infty(X)} \int_0^T \int_0^t  \big\lVert P_0(t) a \boldsymbol{1}_{X(\leq2T)}  P_0(t-s) \big\rVert_{\bHS} \,  d s  d t 
        \\
        &\leq \tilde{C}_I \norm{V}_{L^\infty(X)}  \frac{ \sqrt{2\pi} T^{\frac{5}{2}} e^{2T+\frac12} \lVert a \rVert_{L^\infty(X)}}{r} \sqrt{\Vol(X(\leq2T))},
    \end{aligned} 
    \end{equation}
    where the constant $\tilde{C}_I = (\min(I)-\frac14)^{-1/2}$ and we have used Lemma~\ref{LE:free_propagation_restriction_1}. Notice that the operators in the second and third terms are adjoint to each other. Hence, the bound just obtained for the second term is also a bound for the third term. For the fourth term, we again use the definition of $Q_V(t)$ and Lemma~\ref{LE:free_propagation_restriction_1} to obtain the bound   
    \begin{equation}\label{EQ:lifting_type_lemma_5}
    \begin{aligned}
        \MoveEqLeft \Big\lVert \int_0^T \Pi_I(H_V)Q_V(t) \tilde{a} Q_V^{*}(t)\Pi_I(H_V) \,  d t  \Big\rVert_{\bHS} 
        \\
        &\leq \tilde{C}_I^2 \norm{V}_{L^\infty(X)}^2 \frac{ \sqrt{8\pi} T^{\frac{7}{2}} e^{2T+\frac12} \lVert a \rVert_{L^\infty(X)}}{3r} \sqrt{\Vol(X(\leq2T))},
    \end{aligned} 
    \end{equation}
    where again $\tilde{C}_I = (\min(I)-\frac14)^{-1/2}$. By combining the estimates in \eqref{EQ:lifting_type_lemma_2}-\eqref{EQ:lifting_type_lemma_5}, we obtain the inequality  
    \begin{equation}\label{EQ:lifting_type_lemma_6}
    \begin{aligned}
        \Big\lVert \int_0^T \Pi_I(H_V)P_V(t) \tilde{a} P_V(t)\Pi_I(H_V) \,  d t  \Big\rVert_{\bHS}^2 
        \leq C_I \frac{ T^7 e^{4T+1}\max(1, \lVert V\rVert_{L^\infty(X)}^4) \lVert a \rVert^2_{L^\infty(X)}}{r^2} \Vol(X(\leq2T)),
    \end{aligned} 
    \end{equation}
    where the constant $C_I = 16 \pi (\max(1,\tilde{C}_I^2)$. Finally, combining the estimates in \eqref{EQ:lifting_type_lemma_1} and \eqref{EQ:lifting_type_lemma_6}, we obtain the stated estimate, and this concludes the proof.
\end{proof}

\subsection{Bounds on terms involving the potential}

In this subsection, we will state some quantitative bounds that will be used in the proof of the main result. First, we state an auxiliary bound. The second bound is the main estimate used to control the error terms.

\begin{lemma}\label{LE:Potential_restriction_1}
Let $X=\Gamma\setminus \bH$, where $\Gamma$ is a strictly hyperbolic Fuchsian group, and let $\tau\in\R$. Suppose $V\in L^\infty(X)$ and let $P_0(t)$ and $\widetilde{P}_0(t,\tau)$ be the free and modified free wave propagator associated with $-\Delta_X$. Then for all $T\geq 1$ and $a\in L^\infty(X)$ we have for any $t,t'\in \R_{+}$ such that $\max(t,t')\leq T$ the estimates
\begin{equation*}
       \big\lVert  P_0(t) a \boldsymbol{1}_{X( > 2T)} P_0(t') V  \big\rVert_{\bHS}^2 \leq  4\pi (t+t') \norm{a}_{L^\infty(X)}^2 \norm{V}_{L^2(X)}^2
\end{equation*}
and
\begin{equation*}
       \big\lVert  \widetilde{P}_0(t,\tau) a \boldsymbol{1}_{X( > 2T)} P_0(t') V  \big\rVert_{\bHS}^2 \leq  4\pi (t+t') \norm{a}_{L^\infty(X)}^2 \norm{V}_{L^2(X)}^2
\end{equation*}
\end{lemma}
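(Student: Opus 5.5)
First, since $\widetilde{P}_0(t,\tau)=\cos(\tau t)P_0(t)$ and $|\cos(\tau t)|\leq 1$, the second estimate follows from the first, exactly as in the proof of Lemma~\ref{LE:free_propagation_restriction_1}. For the first estimate I will compute the Hilbert--Schmidt norm through the explicit positive kernel of Lemma~\ref{LE:kernel_of_unper_pro}. The operator $P_0(t)a\boldsymbol{1}_{X(>2T)}P_0(t')V$ has kernel
\[
\sum_{\gamma_1,\gamma_2\in\Gamma}\int_{D(>2T)} A(t,d(x,\gamma_1 w))\,a(w)\,A(t',d(w,\gamma_2 y))\,dw\;V(y),\qquad x,y\in D .
\]
Write $\tilde a=a\boldsymbol{1}_{X(>2T)}$. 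The key simplification is that for $w$ in the thick part we have $\InjRad_X(w)\geq 2T\geq t+t'$. Consequently, for fixed $x$ and $y$, at most one translate $\gamma_1 w$ lies within distance $t$ of $x$, and at most one $\gamma_2 y$ lies within distance $t'$ of $w$. Thus no lattice-counting factor such as Lemma~\ref{LE:counting_loop_lemma} should appear, which is consistent with the absence of $e^{T}/r^2$ in the bound.

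Expanding the square gives an integral over $x,y\in D$ and $w,w'\in D(>2T)$ with four Abel kernels, weighted by $|V(y)|^2$ and bounded by $\norm{a}_\infty^2$. I will unfold the $x$-integral over $\bH$ using the tiling by $D$, absorbing one of the $\gamma$-sums as in \eqref{EQ:free_propagation_restriction_1_2}. Lemma~\ref{LE:Two_abel_kernel_int_est} then gives
\[
\int_{\bH}A(t,d(x,w))A(t,d(x,\gamma w'))\,dx\leq \frac{\boldsymbol{1}_{[0,2t]}(d(w,\gamma w'))}{\sqrt{\sinh d(w,\gamma w')}} .
\]
The remaining factors are $A(t',d(w,\gamma_2y))A(t',d(w',\gamma_4 y))$. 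Here I plan to bound the Abel kernel not by itself but after pairing it with the $x$-integral. Concretely, I will first integrate in $w,w'$ for fixed $y$, lifting $w,w'$ to $\bH$ so that the $\gamma$-sums disappear. This gives a bound
\[
\int_D |V(y)|^2 \Big(\sup_y F(y)\Big)dy\;\norm{a}_\infty^2 ,
\]
where $F(y)=\int_{\bH}\int_{\bH}A(t',d(w,y))A(t',d(w',y))\,\frac{\boldsymbol{1}_{[0,2t]}(d(w,w'))}{\sqrt{\sinh d(w,w')}}\,dw\,dw'$, computed as in the universal cover. Once the quantity is reduced to this $\bH$-integral, the factor $\norm{V}_{L^2(X)}^2$ falls out.

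The main obstacle is bounding $F(y)$ by $4\pi(t+t')$. The first plan is to avoid Lemma~\ref{LE:Two_abel_kernel_int_est} in the $x$ variable and instead keep the structure symmetric. I would write the norm as $\Tr\big[V P_0(t')\tilde a P_0(t)^2\tilde a P_0(t')V\big]$ and integrate over $x$ and the $y$-side propagator so as to obtain two factors of the form $1/\sqrt{\sinh}$, each with an indicator. Lemma~\ref{LE:Two_sqr_sinh_singu_1} then gives a $4\pi\min(\cdot,\cdot)\leq 4\pi(t+t')$ factor. The remaining integral against the last kernel should be handled by the normalisation $\int_{\bH}A(t',d(w,y))\,dw\lesssim 1$, with constants tracked using the $(2\sqrt2\pi)^{-2}$ normalisation noted after Lemma~\ref{LE:Two_abel_kernel_int_est}. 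If the bookkeeping of which pair of kernels to combine fails, the fallback is to apply Lemma~\ref{LE:Two_abel_kernel_int_est} in $x$ and Lemma~\ref{LE:Two_sqr_sinh_singu_1} in $w'$, mirroring \eqref{EQ:free_propagation_restriction_1_3}--\eqref{EQ:free_propagation_restriction_1_4}. In that route the thick-part condition replaces the counting lemma, and the leftover $y$-integral is carried by $|V(y)|^2$.
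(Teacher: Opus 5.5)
There is a genuine gap, and it sits exactly where you flag the main obstacle. Your reduction is fine up to that point: the thick-part condition leaves only one lattice term, and you correctly reach a bound $\norm{a}_{L^\infty(X)}^2\norm{V}_{L^2(X)}^2\sup_y F(y)$. But you never bound $F(y)$, and both routes you sketch fail.

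\emph{First route.} The normalisation $\int_{\bH}A(t',d(w,y))\,dw\lesssim 1$ is false. In polar coordinates it equals $\sqrt{2}\sqrt{\cosh t'-1}=2\sinh(t'/2)$; equivalently, $P_0(t')\boldsymbol{1}=2\sinh(t'/2)\boldsymbol{1}$. So this route loses a factor $e^{t'/2}$.

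\emph{Fallback.} After the $x$-integration, the $w'$-integrand is $(\sinh d(w,w'))^{-1/2}A(t',d(w',y))$. The Abel kernel is not dominated by $(\sinh d)^{-1/2}$, since it blows up as $d(w',y)\to t'$, so Lemma~\ref{LE:Two_sqr_sinh_singu_1} has nothing to act on. In Lemma~\ref{LE:free_propagation_restriction_1}, the second $(\sinh)^{-1/2}$ factor came from integrating the outer variable $y$ over $\bH$ against two Abel kernels. Here $y$ carries $|V(y)|^2$. Doing the same thing, or bounding the kernel of $P_0(t')V^2P_0(t')$ in your trace formulation, forces $\norm{V}_{L^\infty(X)}^2$ times a volume factor instead of $\norm{V}_{L^2(X)}^2$. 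Any leftover $\int_{\bH} A(t',d(w,y))\,dw$ again costs $2\sinh(t'/2)$.

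The missing idea is the right pairing of the four kernels. Do not pair the two time-$t$ kernels in $x$. Instead, integrate out the intermediate points first, each against one time-$t$ kernel and one time-$t'$ kernel. After expanding the square, unfolding $x$ over $\bH$, and discarding the non-identity terms (your thick-part argument), the integrand for fixed $x,y$ is $\big(\int_{\bH}A(t,d(x,w))A(t',d(w,y))\,dw\big)^2$. Lemma~\ref{LE:Two_abel_kernel_int_est} bounds this by $\boldsymbol{1}_{[0,t+t']}(d(x,y))/\sinh(\max(|t-t'|,d(x,y)))$, which depends on $d(x,y)$ alone. Polar coordinates about $y$ then give an $x$-integral over $\bH$ of at most $2\pi(t+t')$. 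The remaining $y$-integral over $D$ is exactly $\norm{V}_{L^2(X)}^2$.

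This is the paper's proof. In words: for each fixed $y$, bound the kernel of $P_0(t)a\boldsymbol{1}_{X(>2T)}P_0(t')$ pointwise, then take its $L^2$ norm in $x$. Your $F(y)$ is in fact polynomially bounded. However, once you have replaced $\int_{\bH}A(t,d(x,w))A(t,d(x,w'))\,dx$ by its upper bound, proving that needs a further comparison argument your proposal does not contain. It is much simpler never to integrate $x$ first.
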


\begin{proof}
As in the proof of Lemma~\ref{LE:free_propagation_restriction_1}, we have 
\begin{equation*}
     \big\lVert  \widetilde{P}_0(t,\tau) a \boldsymbol{1}_{X( > 2T)} P_0(t') V  \big\rVert_{\bHS}^2 \leq  \big\lVert  P_0(t) a \boldsymbol{1}_{X( > 2T)} P_0(t') V  \big\rVert_{\bHS}^2
\end{equation*}
since $\widetilde{P}_0(t,\tau) = \cos(\tau t) P_0(t)$. Hence, we will only prove the first bound.
Letting $K_{t}^\Gamma(x,y)$ be the integral kernel of $P_0(t)$, we have
\begin{equation}\label{Eq:K_Gamma_1}
    \begin{aligned}
    \MoveEqLeft \big\lVert  P_0(t) a \boldsymbol{1}_{X( > 2T)} P_0(t') V  \big\rVert_{\bHS}^2 
    \\
    &= \int_D \int_D \big| \int_{D( > 2T)} K_t^\Gamma(x,w) a(w) K_{t'}^\Gamma(w,y)V(y) \, d w  \big|^2 \, d yd x
    \\
    &\leq \int_D \int_D \big| \int_{D( > 2T)} \sum_{ \gamma_2 \in \Gamma} \sum_{\gamma_1 \in \Gamma } A(t, d(x, \gamma_1 w) a(w) A(t', d( w, \gamma_2 y ))V(y) \, d w  \big|^2 \, d yd x,
    \end{aligned}
\end{equation}
where we have used the form of the kernel $K^\Gamma_{t}(x,y)$ as given in Lemma~\ref{LE:kernel_of_unper_pro}.  
Since the Abel kernel is positive, we see from taking the absolute value under the integral and performing  $ \gamma_1 \mapsto \gamma_1^{-1} \gamma_2^{-1} $ that  
\begin{equation}\label{Eq:K_Gamma_2}
    \begin{aligned}
    \MoveEqLeft \int_D \int_D \big| \int_{D( > 2T)} \sum_{ \gamma_2 \in \Gamma} \sum_{\gamma_1 \in \Gamma } A(t, d(x, \gamma_1 w) a(w) A(t', d( w, \gamma_2 y ))V(y) \, d w  \big|^2 \, d yd x
    \\
    \leq{}& \lVert a\rVert_\infty^2 \int_D \int_D |V(y) |^2 \big| \int_{D( > 2T)} \sum_{ \gamma_2 \in \Gamma} \sum_{\gamma_1 \in \Gamma } A(t, d( \gamma_1 x, \gamma_2^{-1} w) A(t', d( \gamma_2^{-1} w, y )) \, d w  \big|^2 \, d yd x
    \\
    ={}&\lVert a\rVert_\infty^2 \int_D \int_D |V(y) |^2 \big| \int_{\mathbb{H}( > 2T)} \sum_{\gamma \in \Gamma } A(t, d( \gamma x, w) A(t', d( w, y )) \, d w  \big|^2 \, d yd x,
    \end{aligned}
\end{equation}
where $ \bH(>2T) = \bigcup_{\gamma \in \Gamma} \gamma D(>2T) $. We then obtain that 
\begin{equation}\label{Eq:K_Gamma_3}
    \begin{aligned}
    \MoveEqLeft \int_D \int_D |V(y) |^2 \big| \int_{\mathbb{H}( > 2T)} \sum_{\gamma \in \Gamma } A(t, d( \gamma x, w) A(t', d( w, y )) \, d w  \big|^2 \, d yd x
    \\
    ={}&\int_D  \int_D |V(y) |^2 \int_{[\mathbb{H}( > 2T)]^2} \sum_{\gamma, \tilde{\gamma} \in \Gamma } A(t, d( \gamma x, w) A(t', d( w, y )) A(t, d( \gamma x, \tilde{\gamma} \tilde{w}) A(t', d( \tilde{w}, y )) \, d \tilde{w} d w  d yd x
    \\
    ={}&\int_{\mathbb{H}} \int_D |V(y) |^2  \int_{[\mathbb{H}( > 2T)]^2}  \sum_{\gamma \in \Gamma } A(t, d(x, w) A(t', d( w, y )) A(t, d(x, \gamma \tilde{w}) A(t', d( \tilde{w}, y )) \, d \tilde{w} d w  d yd x
    \\
    ={}&\int_{\mathbb{H}} \int_D |V(y) |^2  \int_{[\mathbb{H}( > 2T)]^2}   A(t, d(x, w) A(t', d( w, y )) A(t, d(x, \tilde{w}) A(t', d( \tilde{w}, y )) \, d \tilde{w} d w  d yd x,
    \end{aligned}
\end{equation}
where in the first equality, we have expanded the square and made the change of variables $\tilde{\gamma} \mapsto \tilde{\gamma}^{-1} \gamma $. Due to the properties of $A$, we have that $d(x,w), d(x, \gamma \tilde{w}) < t$ and $d(w,y), d(\tilde{w},y) < t'$. Finally in the last equality we have used that in the remaining sum, only the identity element can have a non-zero contribution. To see this, assume otherwise. We have that 
\begin{equation*}
    d( \tilde{w}, \gamma \tilde{w}) \leq d( \tilde{w}, x) + d(x, \gamma \tilde{w}) \leq d(\tilde{w},x) + T \leq d(\tilde{w}, w) + d(w,x) + T \leq 2T + T + T \leq 4T.
\end{equation*}
However, since $\tilde{w} \in \mathbb{H}(>2T)$, we have that
\begin{equation*}
    \frac{1}{2} \min_{ \gamma \in \Gamma \setminus\{ \mathrm{id} \} } d( \tilde{w}, \gamma \tilde{w} ) > 2T \Rightarrow \forall \gamma \in \Gamma \setminus \{\mathrm{id} \} : d(\tilde{w}, \gamma \tilde{w}) > 4T.
\end{equation*}
Then by applying Lemma~\ref{LE:Two_abel_kernel_int_est} to the integrals in $w$ and $\tilde{w}$, we obtain the following
\begin{equation}\label{Eq:K_Gamma_4}
    \begin{aligned}
    \MoveEqLeft \int_{\mathbb{H}} \int_D |V(y) |^2  \int_{[\mathbb{H}( > 2T)]^2}  \sum_{\gamma \in \Gamma } A(t, d(x, w) A(t', d( w, y )) A(t, d(x, \gamma \tilde{w}) A(t', d( \tilde{w}, y )) \, d \tilde{w} d w  d yd x
    \\
    \leq{}& \int_{\mathbb{H}} \int_D |V(y) |^2 \big| \int_{\mathbb{H}( > 2T)} \int_{\mathbb{H}( > 2T)}  A(t, d(x, w) A(t', d( w, y )) A(t, d(x, \tilde{w}) A(t', d( \tilde{w}, y )) \, d \tilde{w} d w  d yd x
    \\
    \leq{}& \int_{\bH^2}   \frac{ \boldsymbol{1}_{[0,t+t']}(d(x,y))}{\sinh (\max(|t-t'|,d(x, y)))}   |V(y)\boldsymbol{1}_D(y)|^2  \, dx dy
    \\
    &\leq 4\pi (t+t') \int_{\bH} |V(y)\boldsymbol{1}_D(y)|^2 \,dy
    \\
    &= 4\pi (t+t') \norm{V}_{L^2(X)}^2, 
    \end{aligned}
\end{equation}
where in the second inequality, we have used Lemma~\ref{LE:Two_sqr_sinh_singu_1} and the invariance over $\Gamma$ of the volume measure. Combining the estimates in \eqref{Eq:K_Gamma_1}, \eqref{Eq:K_Gamma_2}, \eqref{Eq:K_Gamma_3}, and \eqref{Eq:K_Gamma_4} we obtain the stated estimate, and this concludes the proof.
\end{proof}

\begin{lemma}\label{LE:Potential_restriction_2}
Let $X=\Gamma\setminus \bH$, where $\Gamma$ be a strictly hyperbolic Fuchsian group, and let $\tau\in\R$. For $V\in L^\infty(X)$ consider the Schr\"odinger operator $H_V = -\Delta_X + V$ and for $t\geq 0$ the associated wave and modified wave propagator $P_V(t)$ and $\widetilde{P}_V(t,\tau)$. Moreover, suppose that $I\subset (\frac{1}{4},\infty)$ is a fixed compact interval. Then for all $a\in L^\infty(X)$ and all $T\geq1$ we have
\begin{equation*}
    \begin{aligned}
        \MoveEqLeft \Big\lVert \int_0^T \Pi_I(H_V) P_V(t) a \boldsymbol{1}_{X(>2T)} P_V(t) \Pi_I(H_V) \, d t \Big\rVert_{\bHS}^2 
       \\
       &\leq 2\Big\lVert \int_0^T P_0(t) a \boldsymbol{1}_{X(>2T)} P_0(t)   \, d t \Big\rVert_{\bHS}^2 +  C_I T^{7} \norm{a}_{L^\infty(X)}^2 \max(1,\norm{V}_{L^\infty(X)}^2) \norm{V}_{L^2(X)}^2, 
    \end{aligned}
\end{equation*}
and
\begin{equation*}
    \begin{aligned}
        \MoveEqLeft \Big\lVert \int_0^T \Pi_I(H_V) \widetilde{P}_V(t,\tau) a \boldsymbol{1}_{X(>2T)} P_V(t) \Pi_I(H_V) \, d t \Big\rVert_{\bHS}^2 
       \\
       &\leq 2\Big\lVert \int_0^T \widetilde{P}_0(t,\tau) a \boldsymbol{1}_{X(>2T)} P_0(t)   \, d t \Big\rVert_{\bHS}^2 +  C_I T^{7} \norm{a}_{L^\infty(X)}^2 \max(1,\norm{V}_{L^\infty(X)}^2) \norm{V}_{L^2(X)}^2, 
    \end{aligned}
\end{equation*}
where the constant $C_I=\big(100 \sqrt{\pi }\max(1,(\min(I)-\frac14)^{-1})\big)^2$ in both cases.
\end{lemma}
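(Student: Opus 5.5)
We follow the proof of Lemma~\ref{LE:lifting_type_lemma}: expand both propagators with the Duhamel formula \eqref{Duhamel_formula_main}, and bound every term containing the potential by Lemma~\ref{LE:Potential_restriction_1} instead of Lemma~\ref{LE:free_propagation_restriction_1}. We give details only for the first estimate. The second is identical, since $\widetilde{P}_V(t,\tau)=\cos(\tau t)P_V(t)$ and all bounds used are insensitive to the factor $|\cos(\tau t)|\le 1$.

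Write $b=a\boldsymbol{1}_{X(>2T)}$ and substitute $P_V(t)=P_0(t)-Q_V(t)$ on both sides of $b$. This gives four terms. We use $(x+y)^2\le 2x^2+2y^2$ to separate the free term $\int_0^T\Pi_I(H_V)P_0(t)bP_0(t)\Pi_I(H_V)\,dt$ from the sum of the three terms containing $Q_V$ or $Q_V^*$. Since $\Pi_I(H_V)$ is a contraction, it may be dropped from the free term in Hilbert--Schmidt norm. This produces the first term on the right-hand side with constant $2$.

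It remains to show that the three remaining terms together have Hilbert--Schmidt norm at most $(\sqrt{C_I}/\sqrt2)\,T^{7/2}\norm{a}_\infty\max(1,\norm{V}_\infty)\norm{V}_{L^2}$.

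\emph{Mixed term $P_0(t)\,b\,Q_V^*(t)$.} We have $Q_V^*(t)=\int_0^t P_0(t-s)VP_V(s)\,ds$. Using $\norm{AB}_{\bHS}\le\norm{A}_{\bHS}\norm{B}$, we get
\[
\norm{P_0(t)bP_0(t-s)VP_V(s)\Pi_I(H_V)}_{\bHS}\le \norm{P_0(t)bP_0(t-s)V}_{\bHS}\,\norm{P_V(s)\Pi_I(H_V)}.
\]
The operator norm satisfies $\norm{P_V(s)\Pi_I(H_V)}\le(\min I-\tfrac14)^{-1/2}$ by functional calculus. Lemma~\ref{LE:Potential_restriction_1} gives $\norm{P_0(t)bP_0(t-s)V}_{\bHS}\le\sqrt{8\pi T}\norm{a}_\infty\norm{V}_2$. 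Integrating over $s\in[0,t]$ and $t\in[0,T]$ gives a bound of order $T^{5/2}\norm{a}_\infty\norm{V}_2$.

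\emph{Mixed term $Q_V(t)\,b\,P_0(t)$.} This operator is the adjoint of the previous one up to placing the projections, and Hilbert--Schmidt norms are invariant under adjoints, so it obeys the same bound.

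\emph{Double term $Q_V(t)\,b\,Q_V^*(t)$.} Expand as a double integral over $s_1,s_2\in[0,t]$ of
\[
\Pi_I(H_V)P_V(s_1)VP_0(t-s_1)\,b\,P_0(t-s_2)VP_V(s_2)\Pi_I(H_V).
\]
Take operator norms of the outer factor $\Pi_I(H_V)P_V(s_1)V$, which is bounded by $(\min I-\tfrac14)^{-1/2}\norm{V}_\infty$, and of the factor $P_V(s_2)\Pi_I(H_V)$. Keep the Hilbert--Schmidt norm on $P_0(t-s_1)bP_0(t-s_2)V$ and apply Lemma~\ref{LE:Potential_restriction_1}. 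This yields a bound of order $T^{7/2}\norm{V}_\infty\norm{a}_\infty\norm{V}_2$.

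Summing the three pieces, using $T\ge1$ to dominate $T^{5/2}$ by $T^{7/2}$, squaring, and multiplying by $2$ gives the claimed form. The constant stays within $C_I=(100\sqrt\pi\max(1,(\min I-\tfrac14)^{-1}))^2$, since e.g. $\sqrt{8\pi}(2+1)\le100\sqrt\pi$ with room to spare.

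The main point to check is that every $Q_V$-term can be arranged so that $V$ sits directly to the right of a product $P_0(\cdot)bP_0(\cdot)$. This is what puts the $L^2$ norm, rather than the $L^\infty$ norm, of $V$ into the bound. The arrangement relies on two facts: the Duhamel ordering $P_V(t_1)VP_0(t-t_1)$ puts $P_0$ adjacent to $b$, and the factors $P_V$ can be absorbed into operator norms, with no kernel information needed, thanks to $\Pi_I(H_V)$. Both times fed into Lemma~\ref{LE:Potential_restriction_1} are at most $T$, so its hypothesis $\max(t,t')\le T$ holds.
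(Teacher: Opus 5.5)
Your proposal is correct and follows the paper's proof essentially step by step. Like the paper, you expand with Duhamel on both sides, split off the free term with the factor $2$, treat the two mixed terms together via adjoints, and control the double term using the operator bounds $\norm{\Pi_I(H_V)P_V(s)}\le(\min I-\tfrac14)^{-1/2}$ and $\norm{V}_{L^\infty}$ together with Lemma~\ref{LE:Potential_restriction_1}. Your $T^{5/2}$ for the mixed terms is what the time integrals actually give (the paper writes $T^{3/2}$ there), and it is absorbed into $T^{7}$ either way since $T\ge 1$.
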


\begin{proof}
The proof of both bounds are analogous, and we will only give a proof for the first bound. In the following, we let $\tilde{a} = a\boldsymbol{1}_{X(>2T)}$. Using the Duhamel formula \eqref{Duhamel_formula_main} we obtain the following estimate 
\begin{equation}\label{EQ:Potential_restriction_2_1}
    \begin{aligned}
       \MoveEqLeft \Big\lVert \int_0^T \Pi_I(H_V) P_V(t) \tilde{a} P_V(t) \Pi_I(H_V) \, d t \Big\rVert_{\bHS}^2 
       \\
       \leq{}& \Big\lVert \int_0^T \Pi_I(H_V) (P_0(t) - Q_V(t))\tilde{a} (P_0(t) -Q_V^*(t)) \Pi_I(H_V) \, d t \Big\rVert_{\bHS}^2 
       \\
       \leq{}& 2\Big\lVert \int_0^T \Pi_I(H_V) P_0(t)  \tilde{a} P_0(t)  \Pi_I(H_V) \, d t \Big\rVert_{\bHS}^2
       +
       2\Big(\Big\lVert \int_0^T \Pi_I(H_V) P_0(t)  \tilde{a} Q_V^*(t) \Pi_I(H_V) \, d t \Big\rVert_{\bHS} 
       \\
       &+
       \Big\lVert \int_0^T \Pi_I(H_V) Q_V(t) \tilde{a} P_0(t)  \Pi_I(H_V) \, d t \Big\rVert_{\bHS} 
       +
       \Big\lVert \int_0^T \Pi_I(H_V) Q_V(t))\tilde{a} Q_V^*(t) \Pi_I(H_V) \, d t \Big\rVert_{\bHS}\Big)^2 
    \end{aligned}
\end{equation}
We will estimate each of these terms separately. Firstly, since $\Pi_I(H_V)$ is a spectral projection, we have
\begin{equation}\label{EQ:Potential_restriction_2_2}
    \Big\lVert \int_0^T \Pi_I(H_V) P_0(t)  \tilde{a} P_0(t)  \Pi_I(H_V) \, d t \Big\rVert_{\bHS}^2
       \leq \Big\lVert \int_0^T P_0(t)  \tilde{a} P_0(t)   \, d t \Big\rVert_{\bHS}^2.
\end{equation}
Next, notice that since the Hilbert-Schmidt norm is invariant under conjugation, we see that the second and third norms are equal. Hence, we will only estimate the second term. To estimate this norm, recall that 
\begin{equation*}
    Q_V^*(t) =  \int_{0}^{t} P_0(t-t_1) V P_V(t_1) \, dt_1
\end{equation*}
we then obtain the following estimate 
\begin{equation*}
    \Big\lVert \int_0^T \Pi_I(H_V) P_0(t)  \tilde{a} Q_V^*(t) \Pi_I(H_V) \, d t \Big\rVert_{\bHS} 
       \leq{} \tilde{C}_I  \int_0^T \int_{0}^{t}  \big\lVert P_0(t)  \tilde{a} P_0(t-t_1) V  \big\rVert_{\bHS}   \, d t_1 d t,  
\end{equation*}
where the constant $\tilde{C}_I = (\min(I)-\frac14)^{-1/2}$. Now applying Lemma~\ref{LE:Potential_restriction_1}, we obtain
\begin{equation*}
    \begin{aligned}
     \int_0^T \int_{0}^{t}  \big\lVert P_0(t)  \tilde{a} P_0(t-t_1) V  \big\rVert_{\bHS}   \, d t_1 d t
     \leq{}&   \int_0^T \int_{0}^{t} 2 \sqrt{\pi(2t - t_1)} \norm{a}_{L^\infty(X)}\norm{V}_{L^2(X)}  \, d t_1 d t
     \\
     \leq {}&  14 \sqrt{\pi} \tilde{C}_I T^{\frac{3}{2}}  \norm{a}_{L^\infty(X)}\norm{V}_{L^2(X)} 
     \end{aligned}
\end{equation*}
Combining these two estimates, we get
\begin{equation}\label{EQ:Potential_restriction_2_3}
    \begin{aligned}
       \MoveEqLeft \Big\lVert \int_0^T \Pi_I(H_V) P_0(t)  \tilde{a} Q_V^*(t) \Pi_I(H_V) \, d t \Big\rVert_{\bHS} 
       +
       \Big\lVert \int_0^T \Pi_I(H_V) Q_V(t) \tilde{a} P_0(t)  \Pi_I(H_V) \, d t \Big\rVert_{\bHS} 
       \\
       &\leq  28\sqrt{\pi} \tilde{C}_I T^{\frac{3}{2}}  \norm{a}_{L^\infty(X)}\norm{V}_{L^2(X)}. 
    \end{aligned}
\end{equation}
For the last Hilbert-Schmidt norm we again use the expression of $Q_V(t)$ and obtain the estimate
\begin{equation*}
    \begin{aligned}
       \MoveEqLeft \Big\lVert \int_0^T \Pi_I(H_V) Q_V(t))\tilde{a} Q_V^*(t) \Pi_I(H_V) \, d t \Big\rVert_{\bHS}
      \\
      &\leq \tilde{C}_I^2 \int_0^T \int_{0}^t \int_{0}^t  \big\lVert  VP_0(t-t_1)\tilde{a} P_0(t-t_2) V   \Big\rVert_{\bHS} \, d t_1  d t_2  d t 
      \\
      &\leq \tilde{C}_I^2 \norm{V}_{L^\infty(X)} \int_0^T \int_{0}^t \int_{0}^t  \big\lVert P_0(t-t_1)\tilde{a}P_0(t-t_2) V   \Big\rVert_{\bHS} \, d t_1  d t_2  d t, 
    \end{aligned}
\end{equation*}
where the constant $\tilde{C}_I$ is the same as above. Applying Lemma~\ref{LE:Potential_restriction_1}, we obtain 
\begin{equation*}
    \begin{aligned}
        \MoveEqLeft \int_0^T \int_{0}^t \int_{0}^t  \big\lVert P_0(t-t_1)\tilde{a}P_0(t-t_2) V   \Big\rVert_{\bHS} \, d t_1  d t_2  d t  
        \\
       &\leq \int_0^T \int_{0}^t \int_{0}^t 2 \sqrt{\pi(2t - t_1-t_2)} \norm{a}_{L^\infty(X)}\norm{V}_{L^\infty(X)} e^{\frac{R_V}{2}}  \, d t_1  d t_2  d t
       \\
       &\leq 100\sqrt{\pi} T^{\frac{7}{2}} \norm{a}_{L^\infty(X)}\norm{V}_{L^\infty(X)}.
    \end{aligned}
\end{equation*}
Combining these two estimates, we obtain
\begin{equation}\label{EQ:Potential_restriction_2_4}
       \Big\lVert \int_0^T \Pi_I(H_V) Q_V(t))\tilde{a} Q_V^*(t) \Pi_I(H_V) \, d t \Big\rVert_{\bHS}
      \leq  100\sqrt{\pi} \tilde{C}_I^2 T^{\frac{7}{2}} \norm{a}_{L^\infty(X)}\norm{V}_{L^\infty(X)} \norm{V}_{L^2(X)}.
\end{equation}
Combining the estimates in \eqref{EQ:Potential_restriction_2_1}-\eqref{EQ:Potential_restriction_2_4} we obtain the stated estimate, with the constant $C_I$ as stated in the lemma. This concludes the proof.
\end{proof}

\subsection{Estimates for free terms}

The proof of the estimates in this subsection will be based on the methods introduced in \cite{Hippi}. We cannot use the main estimates from \cite{Hippi} directly since we have here done the lifting to the universal cover in a slightly different way. However, the main ideas are the same in the two proofs, and they will also overlap. In the following, we will define the function  $F(t,t',d(w,w'))$ by
\begin{equation}\label{def_of_F}
   F(t,t',d(w,w')) = \int_{\bH} A(t,d(x,w))   A(t',d(x,w'))  \,d x \int_{\bH}    A(t,d(y,w))   A(t',d(y,w'))  \,d y,
\end{equation} 
where $t,t'>0$ and $w,w'\in\bH$. The function $F$ a function of just the distance between $w$ and $w'$, as follows from \cite[Lemma 7.1]{Hippi}. For this function, we have the following integral estimate:
\begin{lemma}\label{LE:int_est_after_exp_mix}
    Let $\beta$ be a positive constant smaller than 1 and let $0<t'<t$. Then
    \begin{equation}\label{LEEQ:int_est_after_exp_mix_1}
       F(t,t',r)\leq \frac{\boldsymbol{1}_{[0,t+t'](r)}}{\sinh(\max(t-t',r))},
    \end{equation}
    and moreover this implies that for $\kappa>1$
    \begin{equation}\label{LEEQ:int_est_after_exp_mix_2}
        \int_{0}^{t+t'} \sinh(r) (1+r)^{-\kappa}   F(t,t',r)\, d r \leq \begin{cases} \frac{\kappa}{\kappa-1}(1+t-t')^{1-\kappa}, & 1 < \kappa < 2 \\ 
        2(1+t-t')^{-1},& \kappa \geq 2.
        \end{cases}
    \end{equation}
\end{lemma}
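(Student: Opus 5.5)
The plan is to obtain \eqref{LEEQ:int_est_after_exp_mix_1} directly from Lemma~\ref{LE:Two_abel_kernel_int_est}, and then insert this pointwise bound into the radial integral for \eqref{LEEQ:int_est_after_exp_mix_2}.

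\textbf{Step 1: the pointwise bound.} By definition \eqref{def_of_F}, $F(t,t',d(w,w'))$ is the square of the single integral $\int_{\bH} A(t,d(x,w))A(t',d(x,w'))\,dx$. Applying Lemma~\ref{LE:Two_abel_kernel_int_est} with $z=w$ and $z'=w'$ bounds that integral by
\[
\boldsymbol{1}_{[0,t+t']}(r)\,\sinh(\max(|t-t'|,r))^{-1/2}, \qquad r=d(w,w').
\]
Squaring, and using $|t-t'|=t-t'$ since $t'<t$, gives \eqref{LEEQ:int_est_after_exp_mix_1}. The dependence of $F$ on $r$ alone is already known from \cite[Lemma 7.1]{Hippi}.

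\textbf{Step 2: splitting the radial integral.} Set $s=t-t'>0$. After Step 1,
\[
\int_0^{t+t'}\sinh(r)(1+r)^{-\kappa}F(t,t',r)\,dr \le \int_0^{s}\frac{\sinh r}{\sinh s}(1+r)^{-\kappa}\,dr+\int_s^{t+t'}(1+r)^{-\kappa}\,dr .
\]
The second integral is at most $\int_s^\infty(1+r)^{-\kappa}dr=\frac{(1+s)^{1-\kappa}}{\kappa-1}$.

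For the first integral, I will use that $\sinh(r)/\sinh(s)\le e^{r-s}$ for $0\le r\le s$. This holds because $r\mapsto e^{-r}\sinh r=(1-e^{-2r})/2$ is increasing. The first integral is therefore bounded by $\int_0^s e^{r-s}(1+r)^{-\kappa}dr$.

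\textbf{Step 3: the exponentially weighted piece.} On $[s/2,s]$ I bound $(1+r)^{-\kappa}\le (1+s/2)^{-\kappa}\le 2^{\kappa}(1+s)^{-\kappa}$, and $\int e^{r-s}dr\le1$ there. On $[0,s/2]$ the weight satisfies $e^{r-s}\le e^{-s/2}$, which decays faster than any power. Together these give a contribution of order $(1+s)^{-\kappa}$, which is at most $(1+s)^{1-\kappa}$. For $\kappa\ge2$ one may further use $(1+s)^{1-\kappa}\le(1+s)^{-1}$.

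\textbf{Main obstacle.} The step I expect to be hardest is matching the exact constants $\frac{\kappa}{\kappa-1}$ and $2$ in \eqref{LEEQ:int_est_after_exp_mix_2}. The crude split above gives the correct power of $(1+s)$ but a worse constant. To sharpen it, I would try comparing directly with $\frac{d}{dr}\big[-(1+r)^{1-\kappa}/(\kappa-1)\big]$ and use $\sinh r\le\sinh s$ on $[0,s]$, so that the whole integral is bounded by $\int_0^{t+t'}\min(1,\sinh r/\sinh s)(1+r)^{-\kappa}dr$. An integration by parts on this expression should then produce the stated constants. Any remaining numerical mismatch would only affect the constants downstream and not the structure of the argument.
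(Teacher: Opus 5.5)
Your Step 1 and the split at $s=t-t'$ in Step 2 match the paper's argument. The tail piece $\int_s^{t+t'}(1+r)^{-\kappa}\,dr\le\frac{1}{\kappa-1}(1+s)^{1-\kappa}$ is also handled the same way.

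\textbf{The gap is in the inner piece.} After replacing $\sinh r/\sinh s$ by $e^{r-s}$ and splitting at $s/2$, you only get a bound $C_\kappa(1+s)^{-\kappa}$. The constant is of the form $C_\kappa=2^\kappa+\frac{1}{\kappa-1}\sup_{s\ge0}e^{-s/2}(1+s)^{\kappa}$. When you then write ``at most $(1+s)^{1-\kappa}$'' you silently drop this constant. So what you actually prove is \eqref{LEEQ:int_est_after_exp_mix_2} with some unspecified $\kappa$-dependent constant, not with $\frac{\kappa}{\kappa-1}$ and $2$ as stated. The integration by parts you propose as a fix is never carried out. The passage to $e^{r-s}$ is exactly where sharpness is lost: $r\mapsto e^{r-s}(1+r)^{-\kappa}$ is decreasing on $[0,\kappa-1]$, so no monotonicity argument is available after that step.

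\textbf{The missing idea.} Keep $\sinh r$ and observe that $r\mapsto\sinh(r)(1+r)^{-\kappa}$ is nondecreasing for $1<\kappa\le2$. Its derivative has the sign of $(1+r)-\kappa\tanh r$, and $\kappa\tanh r\le 2\tanh r\le\min(2r,2)\le 1+r$. Hence
\[
\frac{1}{\sinh s}\int_0^s\sinh(r)(1+r)^{-\kappa}\,dr\le s(1+s)^{-\kappa}\le(1+s)^{1-\kappa}.
\]
Adding the tail piece gives exactly $\bigl(1+\frac{1}{\kappa-1}\bigr)(1+s)^{1-\kappa}=\frac{\kappa}{\kappa-1}(1+s)^{1-\kappa}$. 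This is the paper's route.

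For $\kappa\ge2$ the monotonicity can fail (e.g.\ $\kappa=10$, $r=1$). There you first bound $(1+r)^{-\kappa}\le(1+r)^{-2}$ and apply the $\kappa=2$ case, which gives $2(1+s)^{-1}$.

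As you remark, your weaker constant would be harmless downstream, since only $g_\kappa(T)=o(T^2)$ is used in the proof of Theorem~\ref{thm:main}. Still, as written your argument does not establish the lemma with its stated constants.
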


Note that the second upper bound in \eqref{LEEQ:int_est_after_exp_mix_2} is not optimal, but it will suffice for later estimates.

\begin{proof}
The bound \eqref{LEEQ:int_est_after_exp_mix_1} follows from the definition of the function $F(t,t',r)$ and Lemma~\ref{LE:Two_abel_kernel_int_est}.
For the estimate in \eqref{LEEQ:int_est_after_exp_mix_2} consider the two cases $1<\kappa<2$ and $\kappa\geq 2$. 
Firstly, suppose that $1<\kappa<2$. We then have
    \begin{equation}\label{EQ:int_est_after_exp_mix_1}
    \begin{aligned}
       \int_{0}^{t+t'} \sinh(r) (1+r)^{-\kappa}   F(t,t',r)\, d r \leq {}&  \frac{1}{\sinh(t-t')}\int_{0}^{t-t'} \sinh(r) (1+r)^{-\kappa}    \, d r 
       + \int_{t-t'}^{t+t'}  (1+r)^{-\kappa}   \, d r  
       \end{aligned}
    \end{equation}
Using that $\sinh(r) (1+r)^{-\kappa}$ is an increasing function in $r$ we obtain for the first integral on the right hand side in \eqref{EQ:int_est_after_exp_mix_1} that
\begin{equation}\label{EQ:int_est_after_exp_mix_2}
    \begin{aligned}
         \frac{1}{\sinh(t-t')}\int_{0}^{t-t'} \sinh(r) (1+r)^{-\kappa}    \, d r  \leq (1+t-t')^{-\kappa} \int_{0}^{t-t'} 1   \, d r \leq (1+t-t')^{1-\kappa}.
    \end{aligned}
\end{equation}
For the second integral on the right hand side in \eqref{EQ:int_est_after_exp_mix_1} we get by evaluating the integral that
    \begin{equation}\label{EQ:int_est_after_exp_mix_3}
    \begin{aligned}
       \int_{t-t'}^{t+t'}  (1+r)^{-\kappa}   \, d r = \frac{1}{\kappa-1} (1+t-t')^{1-\kappa} - \frac{1}{\kappa-1} (1+t+t')^{1-\kappa}.   
       \end{aligned}
    \end{equation}
Combing the estimates in \eqref{EQ:int_est_after_exp_mix_1}, \eqref{EQ:int_est_after_exp_mix_2} and \eqref{EQ:int_est_after_exp_mix_3} we obtain the estimate
\begin{equation}\label{EQ:int_est_after_exp_mix_4}
    \begin{aligned}
       \int_{0}^{t+t'} \sinh(r) (1+r)^{-\kappa}   F(t,t',r)\, d r \leq {}&  \frac{\kappa}{\kappa -1}(1+t-t')^{1-\kappa}.  
    \end{aligned}
\end{equation}
For the case $\kappa\geq 2$ we notice that $(1+r)^{-\kappa} \leq (1+r)^{-2}$ and that $\sinh(r) (1+r)^{-2}$ is also an increasing function in $r$. Then by arguing as in the previous case we obtain
\begin{equation}
    \begin{aligned}
       \int_{0}^{t+t'} \sinh(r) (1+r)^{-\kappa}   F(t,t',r)\, d r \leq {}&  2(1+t-t')^{-1}.
    \end{aligned}
\end{equation}
This concludes the proof.
\end{proof}

With this we are now ready to state and prove the main estimate of this subsection.

\begin{lemma}\label{LE:free_HS_est_exponential_mixing}
Let $X=\Gamma\setminus \bH$, where $\Gamma$ be a strictly hyperbolic Fuchsian group, and let $\tau\in\R$. Let $P_0(t)$ and $\widetilde{P}_0(t,\tau)$ be the free and modified free wave propagator associated with $-\Delta_X$. Moreover, suppose that $a\in L^\infty(X)$ with zero average and that there exist $C_1>0$ and $\kappa>1$ such that  
\begin{equation}\label{LEEQ:free_HS_est_exponential_mixing_1}
  \big|\langle a \circ \phi_r, a \rangle \big| \leq C_1 (t+1)^{-\kappa} \lVert a \rVert_{L^2(X)}^2,\quad r \geq 0.  
\end{equation}
Then for all $T\geq 1$, we have the estimates
\begin{equation*}
       \Big\lVert \int_0^T  P_0(t) a \boldsymbol{1}_{X( > 2T)} P_0(t) \,d t  \Big\rVert_{\bHS}^2 \leq g_\kappa(T) \norm{a}^2_{L^2(X)}+  4\pi T^3 \norm{a}_{L^\infty(X)}^2 \vol(X(\leq 2T)),
\end{equation*}
and 
\begin{equation*}
       \Big\lVert \int_0^T  \widetilde{P}_0(t,\tau) a \boldsymbol{1}_{X( > 2T)} P_0(t) \,d t  \Big\rVert_{\bHS}^2 \leq g_\kappa(T) \norm{a}^2_{L^2(X)}+  4\pi T^3 \norm{a}_{L^\infty(X)}^2 \vol(X(\leq 2T)),
\end{equation*}
where 
\begin{equation*}
    g_\kappa(T) = \begin{cases}
        \frac{\kappa C_1}{\kappa -1} T^{3-\kappa} & \text{if $1<\kappa<2$,}\\
        2 C_1(T+1)\ln (T+1) & \text{if $\kappa\geq 2$.}
    \end{cases}        
\end{equation*}
\end{lemma}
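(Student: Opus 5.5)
We first reduce the second estimate to the first. Since $\widetilde P_0(t,\tau)=\cos(\tau t)P_0(t)$ and the Abel kernel is nonnegative, after expanding the Hilbert--Schmidt norm the factor $\cos(\tau t)\cos(\tau t')$ is bounded by $1$ in absolute value. The argument below only uses absolute values of kernel products together with the mixing bound, which is applied to $|\langle a\circ\phi_r,a\rangle|$. So the same chain of inequalities gives the second bound. We therefore concentrate on
\[
\Big\lVert \int_0^T P_0(t)\tilde a P_0(t)\,dt\Big\rVert_{\bHS}^2=\int_0^T\!\!\int_0^T \Tr\big[P_0(t)\tilde a P_0(t)P_0(t')\bar{\tilde a}P_0(t')\big]\,dt\,dt',
\qquad \tilde a=a\boldsymbol 1_{X(>2T)}.
\]

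\textbf{Step 1: unfolding the trace.} We write the trace using the kernels of Lemma~\ref{LE:kernel_of_unper_pro} and unfold exactly as in \eqref{Eq:K_Gamma_2}--\eqref{Eq:K_Gamma_3}. This gives
\[
\int_{D(>2T)}\int_{\bH}\sum_{\gamma\in\Gamma}\tilde a(w)\overline{\tilde a(\gamma w')}\,F_\gamma\,dw'\,dw,
\]
where $F_\gamma$ is a product of two integrals over $x$ and $y$ of Abel kernels. The supports of the Abel kernels force $d(w,\gamma w')\le t+t'\le 2T$. Because $w$ lies in the $2T$-thick part, only one $\gamma$ contributes for each $w'$, as in the argument following \eqref{Eq:K_Gamma_3}. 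We then obtain
\[
\int_{D(>2T)}\int_{\bH}\tilde a(w)\overline{\tilde a(w')}\,F(t,t',d(w,w'))\,dw'\,dw .
\]
Next we replace $\tilde a(w')$ by $a(w')$. The error comes from $w'$ in the thin part $X(\le 2T)$ (lifted). It is bounded using $|a|\le\lVert a\rVert_\infty$, the estimate $\int_{\bH} F\,dw'\le 4\pi(t+t')$ from Lemma~\ref{LE:Two_sqr_sinh_singu_1} and Lemma~\ref{LE:int_est_after_exp_mix}, and the symmetric roles of $w,w'$. Integrating over $t,t'\le T$ produces the term $4\pi T^3\lVert a\rVert_\infty^2\Vol(X(\le 2T))$. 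The analogous error from removing the restriction on $w$ is handled the same way.

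\textbf{Step 2: the main term via mixing.} We write $w'$ in polar coordinates $(r,\theta)$ around $w$, so that $w'=\pi_1(\phi_r(w,\theta))$ and $dw'=\sinh r\,dr\,d\theta$. Since $F$ depends only on $r$, the main term becomes
\[
\int_0^{t+t'}\sinh(r)\,F(t,t',r)\,\Big(\int_{T^1X} a\;\overline{a\circ\phi_r}\Big)\,dr .
\]
This requires that the polar coordinates descend to $T^1X$ without overcounting, which holds because $w$ is in the thick part and $r\le 2T$. The zero-average assumption together with \eqref{LEEQ:free_HS_est_exponential_mixing_1} bounds the inner correlation by $C_1(1+r)^{-\kappa}\lVert a\rVert_2^2$. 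By symmetry in $t,t'$ we may assume $t'<t$. Then \eqref{LEEQ:int_est_after_exp_mix_2} bounds the $r$-integral by $\frac{\kappa}{\kappa-1}(1+t-t')^{1-\kappa}$, or by $2(1+t-t')^{-1}$ when $\kappa\ge2$.

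\textbf{Step 3: time integration.} We integrate over $0\le t'<t\le T$, doubling to account for the symmetric region. For $1<\kappa<2$ we have $\iint(1+t-t')^{1-\kappa}\lesssim T^{3-\kappa}$. For $\kappa\ge 2$ we have $\iint(1+t-t')^{-1}\le (T+1)\ln(T+1)$. These give $g_\kappa(T)$, after absorbing the numerical constants as stated.

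\textbf{Main obstacle.} The delicate step is Step 1: correctly restricting the $\Gamma$-sum to a single term using thickness, and expressing the main part as a correlation on $T^1X$. A second point of care is that the observable must be lifted to phase space consistently with the normalisation in (MIX). Our first attempt there is to follow \cite{Hippi}, Section 7, adapting its change of variables to the present way of lifting to the universal cover.
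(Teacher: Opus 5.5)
Your proposal is correct and is essentially the paper's proof. It uses the same unfolding via the Abel-kernel expansion, with the $2T$-thickness killing all non-identity terms. It uses the same add-and-subtract splitting into the full term (the paper's $\mathcal{I}_1$) and two thin-part terms (the paper's $\mathcal{I}_2+\mathcal{I}_3$). The full term gets the same polar-coordinates/\textbf{(MIX)}/Lemma~\ref{LE:int_est_after_exp_mix} treatment, followed by the time integration.

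Only some bookkeeping needs fixing.
\begin{enumerate}
\item In Step~2 the identity $\int_D\int_{\S^1}a(w)\overline{a(\pi_1(\varphi_r(w,\theta)))}\,d\theta\,dw=\langle a,a\circ\phi_r\rangle_{L^2(T^1X)}$ needs no thickness. You have already removed that restriction on $w$, and $D\times\S^1$ is a fundamental domain for $T^1X$ with $a$ $\Gamma$-periodic.
\item To get $4\pi T^3$ for the two thin-part terms together, use the sharper bound $\int_{\bH}F(t,t',d(w,w'))\,dw'\le 2\pi(t+t')$.
\item The time integration actually produces constants such as $4C_1(T+1)\ln(T+1)$ for $\kappa\ge2$, and an extra factor $(2-\kappa)^{-1}$ for $1<\kappa<2$. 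This matches the $g_\kappa$ written at the end of the paper's proof rather than the one in the statement. It is harmless, since only $g_\kappa(T)=o(T^2)$ is used later.
\end{enumerate}
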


\begin{proof}
We will here give the proof for the second estimate. The first is obtained by an analogous argument that omits the factor $\cos(\tau t)$. Letting $K_{t}^\Gamma(x,y)$ be the integral kernel of $P_0(t)$, we have
\begin{equation*}
   \Big\lVert \int_0^T  \widetilde{P}_0(t,\tau) a \boldsymbol{1}_{X( > 2T)} P_0(t) \,d t  \Big\rVert_{\bHS}^2 = \int_D \int_D \big|\int_0^T \cos(\tau t) \int_{D( > 2T)} K_t^\Gamma(x,w) a(w) K_{t}^\Gamma(w,y) \, d w d t  \big|^2 \, d yd x.
\end{equation*}
Recall that
\begin{equation*}
        K^\Gamma_{t}(x,y)= \sum_{\gamma \in \Gamma} A(t,d(x,\gamma y)).
\end{equation*}
By arguing as in the proof of Lemma~\ref{LE:Potential_restriction_1} we obtain after expanding the square the following identity
\begin{equation*}
    \begin{aligned}
         \MoveEqLeft \Big\lVert \int_0^T  P_0(t) a \boldsymbol{1}_{X( > 2T)} P_0(t) \,d t  \Big\rVert_{\bHS}^2 
         \\
         ={}&  \Big| \int_D \int_D \big| \int_0^T \cos(\tau t) \int_{D( > 2T)} \sum_{\gamma_1,\gamma_2\in\Gamma} A(t,d(\gamma_1 x,w)) a(w) A(t,d((w, \gamma_2 y)) \, d w  d t \big|^2 \, d yd x \Big|
         \\
         ={}&\Big| \int_0^T\int_0^T \cos(\tau t) \cos(\tau t') \int_{D( > 2T)}  \int_{B(w,2T)\cap \bH( > 2T) }  a(w) \overline{a(w')}  F(t,t',d(w,w'))d w' d w      d t'd t\Big|,
    \end{aligned}
\end{equation*}
where we view $a$ as a periodic function on $\bH$,  $ \bH(>2T) = \bigcup_{\gamma \in \Gamma} \gamma D(>2T) $, and the function $F(t,t',(d(w,w'))$ is as defined in \eqref{def_of_F}. Moreover, the restriction of the integration domain of $w'$ follows from the support properties of the Abel kernels. We then notice that due to the support properties of $F$ we have that 
\begin{equation*}
    \begin{aligned}
         \MoveEqLeft  \int_{D( > 2T)}  \int_{B(w,2T)\cap \bH( > 2T) }  a(w) \overline{a(w')}  F(t,t',d(w,w')) dx dyd w' d w  
         \\
         &=  \int_{D( > 2T)}  \int_{B(w,2T)\cap D( > 2T) }  a(w) \overline{a(w')} \sum_{\gamma \in\Gamma}  F(t,t',d(w, \gamma w')) dx dyd w' d w 
         \\
         &=\int_{D( > 2T)}  \int_{B(w,2T)\cap D( > 2T) }  a(w) \overline{a(w')}  F(t,t',d(w, w')) dx dyd w' d w, 
    \end{aligned}
\end{equation*}
where an argument analogous to the one used in the proof of Lemma~\ref{LE:Potential_restriction_1} gives us that only the identity element contributes. Using this we obtain that 
\begin{equation}\label{EQ:free_HS_est_exponential_mixing_1}
    \begin{aligned}
         \MoveEqLeft \Big\lVert \int_0^T  P_0(t) a \boldsymbol{1}_{X( > 2T)} P_0(t) \,d t  \Big\rVert_{\bHS}^2 
         \\
         ={}&\Big| \int_0^T\int_0^T \cos(\tau t) \cos(\tau t') \int_{D( > 2T)}  \int_{B(w,2T)\cap D( > 2T) }  a(w) \overline{a(w')}  F(t,t',d(w,w'))d w' d w      d t'd t\Big|,
    \end{aligned}
\end{equation}
By adding and subtracting the two terms, 
\begin{equation*}
     \int_0^T\int_0^T \cos(\tau t) \cos(\tau t') \int_{D( > 2T)}  \int_{B(w,2T)\cap D( \leq 2T) } a(w) \overline{a(w')} F(t,t',d(w,w'))  \,d w' d w      d t'd t
\end{equation*}
and
\begin{equation*}
     \int_0^T\int_0^T \cos(\tau t) \cos(\tau t') \int_{D( \leq 2T)}  \int_{B(w,2T)} a(w) \overline{a(w')}  F(t,t',d(w,w')) \,d w' d w      d t'd t.
\end{equation*}
We have that
\begin{equation}\label{EQ:free_HS_est_exponential_mixing_3}
    \begin{aligned}
         \MoveEqLeft \Big| \int_0^T\int_0^T \cos(\tau t) \cos(\tau t') \int_{D( > 2T)}  \int_{B(w,2T)\cap \bH( > 2T) } a(w) \overline{a(w')} F(t,t',d(w,w'))  \,d w' d w      d t'd t\Big|
         \\
         \leq {}&  \int_0^T\int_0^T \Big| \int_{D}  \int_{B(w,2T) } a(w) \overline{a(w')}  F(t,t',d(w,w')) \,d w' d w   \Big|   d t'd t
         \\
         &+ \int_0^T\int_0^T \Big| \int_{D( > 2T)}  \int_{B(w,2T)\cap D( \leq 2T) } a(w) \overline{a(w')} F(t,t',d(w,w'))  \,d w' d w    \Big|  d t'd t 
         \\
         &+ \int_0^T\int_0^T \Big| \int_{D( \leq 2T)}  \int_{B(w,2T)} a(w) \overline{a(w')}  F(t,t',d(w,w'))  \,d w' d w  \Big|    d t'd t 
         \\
         ={}& \mathcal{I}_1(a,T) +\mathcal{I}_2(a,T)+\mathcal{I}_3(a,T),
    \end{aligned}
\end{equation}
where we have used $|\cos(\tau t)|\leq 1$. We will first estimate $\mathcal{I}_2(a,T)$ and $\mathcal{I}_3(a,T)$. To do so, we notice that by applying Lemma~\ref{LE:Two_abel_kernel_int_est} to the integrals of the variables $x$ and $y$ in the definition of the function $ F(t,t',d(w,w'))$, we obtain the estimate 
\begin{equation}\label{EQ:free_HS_est_exponential_mixing_4}
    F(t,t',d(w,w')) \leq  \frac{\boldsymbol{1}_{[0,t'+t]}(d(w,w'))}{\sinh(\max(|t-t'|,d(w,w'))}.
\end{equation}
Moreover, since $ F(t,t',d(w,w'))$ is symmetric in $w$ and $w'$, we obtain the following estimate
\begin{equation}\label{EQ:free_HS_est_exponential_mixing_5}
    \begin{aligned}
        \mathcal{I}_2(a,T)+\mathcal{I}_3(a,T)
        &\leq 
         2\norm{a}^2_{L^\infty(X)}\int_0^T\int_0^T \int_{D} \int_{D( \leq 2T)} \frac{\boldsymbol{1}_{[0,t'+t]}(d(w,w'))}{\sinh(\max(|t-t'|,d(w,w'))}   \, d w  d w'   d t'd t 
         \\
         &\leq 4\pi T^3\norm{a}^2_{L^\infty(X)} \vol(X(\leq 2T)),
    \end{aligned}
\end{equation}
where we first evaluated the integral in $w'$ by switching to polar coordinates centered at $w$ and then the remaining integrals were evaluated. 
To estimate $\mathcal{I}_1(a,T)$ we first notice that $F(t,t',d(w,w'))$ is symmetric in $t$ and $t'$. Therefore, we obtain the following
\begin{equation*}
    \begin{aligned}
        \mathcal{I}_1(a,T)
        \leq 2  \int_0^T\int_0^t \Big| \int_{D}  \int_{B(w,2T) } a(w) \overline{a(w')}  F(t,t',d(w,w'))  \,d w' d w   \Big|   d t'd t
    \end{aligned}
\end{equation*}
Next we write $w'$ in polar coordinates around $w$ such that $w' = \pi_1(\varphi_r(w,\theta))$, this gives us 
\begin{equation*}
    \begin{aligned}
        \mathcal{I}_1(a,T)
        &\leq 2  \int_0^T\int_0^t \Big| \int_{D} \int_{0}^{2T}  \int_{\mathbb{S}^1 } a(w) \overline{a(\pi_1(\varphi_r(w,\theta))}  \sinh(r) F(t,t',r)  \,d \theta d r d w   \Big|   d t'd t
        \\
        &\leq 2 \int_0^T\int_0^t \int_{0}^{2T} \sinh(r)F(t,t',r)  \Big| \int_{D}   \int_{\mathbb{S}^1 } a(w) \overline{a(\pi_1(\varphi_r(w,\theta))}  \,d \theta d w   \Big| d r   d t'd t
        \\
        &= 2 \int_0^T\int_0^t \int_{0}^{2T} \sinh(r)F(t,t',r)  \big| \langle a, a\circ \varphi_r\rangle_{L^2(D\times \mathbb{S}^1)}   \big| d r   d t'd t.
    \end{aligned}
\end{equation*}
From our assumptions \eqref{LEEQ:free_HS_est_exponential_mixing_1} we have that there exist a $C_1>0$ and $\kappa>1$ such that 
\begin{equation*}
  \big|\langle a \circ \phi_r, a \rangle \big| \leq C_1 (t+1)^{-\kappa} \lVert a \rVert_{L^2(X)}^2,\quad r \geq 0.  
\end{equation*}
Hence, using Lemma~\ref{LE:int_est_after_exp_mix} we obtain the following estimate
\begin{equation}\label{EQ:free_HS_est_exponential_mixing_7}
    \begin{aligned}
        \mathcal{I}_1(a,T)
        &\leq C_1 \max\big(\tfrac{\kappa}{\kappa-1},2\big)\norm{a}^2_{L^2(X)} \int_0^T\int_0^t (1+t-t')^{\max (1-\kappa,-1)}   d t'd t
        \\
        & \leq  g_\kappa(T) \norm{a}^2_{L^2(X)},
    \end{aligned}
\end{equation}
where
\begin{equation*}
    g_\kappa(T) = \begin{cases}
        \frac{\kappa 2^{4-\kappa} C_1}{(\kappa -1)(2-\kappa)} T^{3-\kappa} & \text{if $1<\kappa<2$,}\\
        4 C_1(T+1)\ln (T+1) & \text{if $\kappa\geq 2$.}
    \end{cases}        
\end{equation*}
To finalise the proof, we combine the estimates and identities in \eqref{EQ:free_HS_est_exponential_mixing_1}, \eqref{EQ:free_HS_est_exponential_mixing_3}, \eqref{EQ:free_HS_est_exponential_mixing_5}, and \eqref{EQ:free_HS_est_exponential_mixing_7} to obtain the stated estimate. This concludes the proof.    
\end{proof}

\subsection{Quantitive bound on the full Hilbert-Schmidt norm}

We will here combine the previously obtained estimates into a single estimate for the Hilbert-Schmidt norm of main interest. This is the content of the following lemma. 

\begin{lemma}\label{LE:Main_HS_est}
Let $X=\Gamma\setminus \bH$, where $\Gamma$ be a strictly hyperbolic Fuchsian group, and let $\tau\in\R$. 
For a $V\in L^\infty(X)$ consider the Schr\"odinger operator $H_V = -\Delta_X + V$ and for $t\geq 0$ the associated wave and modified wave propagator $P_V(t)$ and $\widetilde{P}_V(t,\tau)$. Moreover, suppose that $I\subset (\frac{1}{4},\infty)$ is a fixed compact interval and suppose that $a\in L^\infty(X)$ with zero average and that there exist $C_1>0$ and $\kappa>1$ such that  
\begin{equation*}
  \big|\langle a \circ \phi_r, a \rangle \big| \leq C_1 (t+1)^{-\kappa} \lVert a \rVert_{L^2(X)}^2,\quad r \geq 0.  
\end{equation*}
Then for all $0<r\leq \min(1, \frac{\InjRad_X}{2})$ and all $T\geq1$ we have
\begin{equation}
    \begin{aligned}
       \MoveEqLeft \Big\lVert \int_0^T \Pi_I(H_V) P_V(t) a P_V(t) \Pi_I(H_V) \, d t \Big\rVert_{\bHS}^2 
       \\
       \leq{}&2g_\kappa(T) \norm{a}^2_{L^2(X)}+  16\pi T^3 \norm{a}_{L^\infty(X)}^2 \vol(X(\leq 2T))
       \\
       &+ C_I T^7  \max(1, \lVert V\rVert_{L^\infty(X)}^4)  \lVert a \rVert^2_{L^\infty(X)}\left( \frac{  e^{4T+\frac12} \Vol(X(\leq2T))}{r^2} + \norm{V}_{L^2(X)}^2 \right)
        \end{aligned}
    \end{equation}
    and 
    \begin{equation}
    \begin{aligned}
       \MoveEqLeft \Big\lVert \int_0^T \Pi_I(H_V) \widetilde{P}_V(t,\tau) a P_V(t) \Pi_I(H_V) \, d t \Big\rVert_{\bHS}^2 
       \\
       \leq{}& 2g_\kappa(T) \norm{a}^2_{L^2(X)}+  16\pi T^3 \norm{a}_{L^\infty(X)}^2 \vol(X(\leq 2T))
       \\
       &+ C_I T^7  \max(1, \lVert V\rVert_{L^\infty(X)}^4)  \lVert a \rVert^2_{L^\infty(X)}\left( \frac{  e^{4T+\frac12} \Vol(X(\leq2T))}{r^2} + \norm{V}_{L^2(X)}^2 \right),
        \end{aligned}
    \end{equation}
where
\begin{equation*}
    g_\kappa(T) = \begin{cases}
        \frac{\kappa 2^{4-\kappa} C_1}{(\kappa -1)(2-\kappa)} T^{3-\kappa} & \text{if $1<\kappa<2$,}\\
        4 C_1(T+1)\ln (T+1) & \text{if $\kappa\geq 2$.}
    \end{cases}        
\end{equation*}
 and the constant is given by $C_{I}=\big(200 \pi \max(1,(\min(I)-\frac14)^{-1})\big)^2$ in both cases.
\end{lemma}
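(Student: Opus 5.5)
The bound is obtained by chaining the three preceding estimates. I treat the first inequality; the modified one is identical with $\widetilde{P}_V(t,\tau)$, $\widetilde{P}_0(t,\tau)$ in place of $P_V(t)$, $P_0(t)$, since every auxiliary lemma was stated in both versions.

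\textbf{Step 1 (thin part).} Apply Lemma~\ref{LE:lifting_type_lemma}. This bounds the full Hilbert-Schmidt norm squared by two contributions. The first is twice the norm of the thick-part operator
\begin{equation*}
\int_0^T \Pi_I(H_V) P_V(t) a \boldsymbol{1}_{X(>2T)} P_V(t) \Pi_I(H_V)\,dt .
\end{equation*}
The second is the error
\begin{equation*}
16\pi\max(1,(\min(I)-\tfrac14)^{-1})\, T^7 e^{4T+1}\max(1,\norm{V}_\infty^4)\norm{a}_\infty^2 r^{-2}\Vol(X(\leq 2T)).
\end{equation*}

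\textbf{Step 2 (potential).} Apply Lemma~\ref{LE:Potential_restriction_2} to the thick-part term. This replaces $P_V$ by the free propagator $P_0$ and strips off the spectral projections, at the cost of a factor $2$ on the free term. It also adds an error $C'_I T^7\norm{a}_\infty^2\max(1,\norm{V}_\infty^2)\norm{V}_{L^2}^2$ with $C'_I=(100\sqrt\pi\max(1,(\min I-\frac14)^{-1}))^2$.

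\textbf{Step 3 (free term).} Apply Lemma~\ref{LE:free_HS_est_exponential_mixing} to
\begin{equation*}
\Big\lVert\int_0^T P_0(t)a\boldsymbol{1}_{X(>2T)}P_0(t)\,dt\Big\rVert_{\bHS}^2 .
\end{equation*}
This uses the zero-average and polynomial mixing hypotheses. It gives $g_\kappa(T)\norm{a}_{L^2}^2+4\pi T^3\norm{a}_\infty^2\Vol(X(\leq 2T))$, where $g_\kappa$ should be taken in the form derived at the end of that lemma's proof, with constants $\kappa2^{4-\kappa}C_1/((\kappa-1)(2-\kappa))$ and $4C_1$. That form matches the $g_\kappa$ in the present statement.

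\textbf{Step 4 (bookkeeping).} Multiplying out the constants from Steps 1--3 gives
\begin{equation*}
2\cdot2\big(g_\kappa\norm{a}_2^2+4\pi T^3\norm{a}_\infty^2\Vol\big)=4g_\kappa\norm{a}_2^2+16\pi T^3\norm{a}_\infty^2\Vol(X(\leq 2T)).
\end{equation*}
This correctly produces the stated $16\pi T^3$ term, but gives $4g_\kappa$ rather than the stated $2g_\kappa$.

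\begin{itemize}
\item To reach $2g_\kappa$, one factor of $2$ must be absorbed. One option is to apply Lemma~\ref{LE:free_HS_est_exponential_mixing} with $C_1/2$. A cleaner option is to redo Step~2 without the crude $(x+y)^2\le2x^2+2y^2$: write $(x+y)^2\le(1+\epsilon)x^2+(1+\epsilon^{-1})y^2$ and fold the extra $\epsilon^{-1}$ into the generous constant $C_I$.
\item If neither works, I would accept the stated bound with $g_\kappa$ read up to an absolute factor. Only the growth in $T$ matters in Section~\ref{SEC:proof_main_thm}.
\end{itemize}

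The two error terms are then merged. Use $\max(1,\norm V_\infty^2)\le\max(1,\norm V_\infty^4)$. The lifting constant $16\pi\max(\cdot)\,e^{4T+1}$ becomes $C_I e^{4T+1/2}$ after enlarging the numerical constant, because $e^{1/2}\cdot16\pi\le(200\pi)^2$. Since $(100\sqrt\pi)^2\cdot2\le(200\pi)^2$, one common $C_I=(200\pi\max(1,(\min I-\frac14)^{-1}))^2$ dominates both error constants, including the factor $2$ from Step~1.

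The main obstacle is exactly this constant tracking, in particular the factor-of-two mismatch on $g_\kappa$. I would resolve it by the weighted-Young approach before resorting to redefining $g_\kappa$. No new analytic input is needed.
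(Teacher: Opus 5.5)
Your Steps 1--3 are exactly the paper's proof: Lemma~\ref{LE:lifting_type_lemma}, then Lemma~\ref{LE:Potential_restriction_2}, then Lemma~\ref{LE:free_HS_est_exponential_mixing}, after which the paper simply asserts that the constants combine. You are right that this chain gives $4g_\kappa$. You are also right to use the larger $g_\kappa$ obtained at the end of that lemma's proof. It keeps the factor $2$ from symmetrising in $t,t'$, and the smaller function in the lemma's statement is not what its proof delivers. (With that smaller function the naive chain would in fact close, since four times it is at most $2g_\kappa$.) The paper is no more careful here: its display \eqref{EQ:Main_HS_est_3} even carries $2g_\kappa$, so its literal chain gives $8g_\kappa$.

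Neither of your proposed repairs reaches $2g_\kappa$, though.
\begin{itemize}
\item Replacing $C_1$ by $C_1/2$ is not available, because the mixing hypothesis is only assumed with $C_1$.
\item Weighting only Step~2 by $(1+\epsilon)$ leaves the factor $2$ from Step~1 in place. The free term then comes out as $2(1+\epsilon)g_\kappa$, which is still strictly larger than $2g_\kappa$.
\end{itemize}

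What works is to split only once. Set $\tilde a=a\boldsymbol{1}_{X(>2T)}$ and write the operator as $F+E$, where
\[
F=\int_0^T\Pi_I(H_V)P_0(t)\tilde aP_0(t)\Pi_I(H_V)\,dt .
\]
Here $E$ is the thin-part operator (still with $P_V$) plus the three thick-part terms containing $Q_V$. Then $\norm{F+E}_{\bHS}^2\le 2\norm{F}_{\bHS}^2+2\norm{E}_{\bHS}^2$. Lemma~\ref{LE:free_HS_est_exponential_mixing} gives
\[
2\norm{F}_{\bHS}^2\le 2g_\kappa(T)\norm{a}_{L^2(X)}^2+8\pi T^3\norm{a}_{L^\infty(X)}^2\Vol(X(\leq 2T)).
\]
For $\norm{E}_{\bHS}^2$, bound it by twice the sum of the squares of its two parts. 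Those parts are controlled by the estimates established inside the proofs of Lemmas~\ref{LE:lifting_type_lemma} and~\ref{LE:Potential_restriction_2}, and the resulting constants fit inside $C_I$. Equivalently, you can weight both splittings so that $(1+\epsilon)^2=2$. As you say, Section~\ref{SEC:proof_main_thm} only uses $g_\kappa(T)=o(T^2)$, so nothing downstream depends on this factor.
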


\begin{proof}
Again, the proofs of the two estimates are analogous, hence we will only prove the first estimate. We start by applying Lemma~\ref{LE:lifting_type_lemma} and obtain the bound
\begin{equation}\label{EQ:Main_HS_est_1}
    \begin{aligned}
       \MoveEqLeft \Big\lVert \int_0^T \Pi_I(H_V) P_V(t) a P_V(t) \Pi_I(H_V) \, d t \Big\rVert_{\bHS}^2 
       \\
       \leq{}& 2\Big\lVert \int_0^T \Pi_I(H_V) P_V(t) a \boldsymbol{1}_{X(>2T)} P_V(t) \Pi_I(H_V) \, d t  \Big\rVert_{\bHS}^2 
        \\
        &+ \frac{C_{I,1} T^7 e^{4T+1}\max(1, \lVert V\rVert_{L^\infty(X)}^4) \lVert a \rVert^2_{L^\infty(X)}}{r^2} \Vol(X(\leq2T)), 
        \end{aligned}
    \end{equation}
    where the constant $C_{I,1} = 16\pi \max(1,(\min(I)-\frac14)^{-1})$. Next,  we apply Lemma~\ref{LE:Potential_restriction_2} to the first term on the right-hand side of \eqref{EQ:Main_HS_est_1}. This gives us the estimate
    \begin{equation}\label{EQ:Main_HS_est_2}
    \begin{aligned}
        \MoveEqLeft \Big\lVert \int_0^T \Pi_I(H_V) P_V(t) a \boldsymbol{1}_{X(>2T)} P_V(t) \Pi_I(H_V) \, d t \Big\rVert_{\bHS}^2 
       \\
       &\leq 2\Big\lVert \int_0^T P_0(t) a \boldsymbol{1}_{X(>2T)} P_0(t)   \, d t \Big\rVert_{\bHS}^2 +  C_{I,2}  T^{7} \norm{a}_{L^\infty(X)}^2 \max(1,\norm{V}_{L^\infty(X)}^2) \norm{V}_{L^2(X)}^2, 
    \end{aligned}
\end{equation}
where the constant $C_{I,2}=\big(100 \sqrt{\pi} \max(1,(\min(I)-\frac14)^{-1})\big)^2$. Next,  we apply Lemma~\ref{LE:free_HS_est_exponential_mixing} to the first term on the right-hand side of \eqref{EQ:Main_HS_est_2}. This gives us the estimate

\begin{equation}\label{EQ:Main_HS_est_3}
       \Big\lVert \int_0^T  P_0(t) a \boldsymbol{1}_{X( > 2T)} P_0(t) \,d t  \Big\rVert_{\bHS}^2 \leq 2g_\kappa(T) \norm{a}^2_{L^2(X)}+  4\pi T^3 \norm{a}_{L^\infty(X)}^2 \vol(X(\leq 2T)).
\end{equation}
Combing the estimates from \eqref{EQ:Main_HS_est_1}, \eqref{EQ:Main_HS_est_2}, and \eqref{EQ:Main_HS_est_3} we can obtain the stated estimate, with constants as stated in the lemma. This concludes the proof. 
\end{proof}

\section{Proof of main result}\label{SEC:proof_main_thm}

In this section, we will give a proof of Theorem~\ref{thm:main}. Before doing so, let us recall the setup. We consider a sequence $ \{ X_{n} \}_{n\in\N} $ of compact connected hyperbolic surfaces that satisfies \textbf{(BSC)}, \textbf{(UND)}, and observables $a_n$ satisfying \textbf{(MIX)} and a sequence $\{ V_{n} \}_{n\in\N} $ of potentials which satisfies {\normalfont\textbf{(POT)}} with the two numbers $C_{\min}$ and $C_{\max}$. Then for each $n\in\N$ we let $H_V^{(n)}= -\Delta_{X_n}+V_n$ be a  Schr\"odinger operator with eigenvalues $\lambda_1^{(n)}\leq \lambda_2^{(n)}\leq \lambda_3^{(n)} \leq \lambda_4^{(n)} \cdots $ and eigenvectors $\{\psi_j\}_{j\in\N}$. Moreover, for all $n,j\in\N$ we set $\rho_{j}^{(n)} = \sqrt{\lambda_j^{(n)}-\frac{1}{4}}$ and $I_n=[a_n,b_n]$. Finally, we fix a closed interval $I=[a,b]\subset (\frac{1}{4},\infty)$ such that $b-C_{\max}>a  -\min(0, C_{\min})$ and $I'=[a',b']$ such that $I\subset I_n \subset I'$ for all $n\in\N$. We are now ready to prove the theorem.

\begin{proof}[Proof of Theorem~\ref{thm:main}]
We start by proving point (1). That is, we need to prove that for any uniformly bounded sequence of measurable functions $\{a_n\}_{n\in\N}$ we have
\begin{equation*}
        \lim_{n\rightarrow \infty}\frac{1}{\mathcal{N}\big(H_V^{(n)},I_n\big)} \sum_{j:\lambda_j^{(n)}\in I_n} \Big| \langle a_n \psi_j^{(n)} , \psi_j^{(n)}\rangle_{L^2(X)} - \frac{1}{\Vol(X_n)} \int_{X_n} a_n(x) \, dx \Big|^2 =0.
\end{equation*}
       
First, since the eigenfunctions are normalised, we can, without loss of generality, assume that $a_n$ has mean zero for all $n\in\N$. Letting $h:\R_+\times\R\mapsto\R$ be given by 
\begin{equation*}
    h(t,\lambda) = \big(\lambda - \frac{1}{4}\big)^{-\frac12}\sin\big(t\sqrt{\lambda - \tfrac{1}{4}}\big),
\end{equation*}
we get for any $\lambda_j^{(n)}\in I_n$ and $T>0$ that 
\begin{equation}
    \langle a_n \psi_j^{(n)} , \psi_j^{(n)}\rangle_{L^2(X_n)} = \frac{1}{\frac{1}{T} \int_0^T h^2(t,\lambda_j^{(n)}) \, d t}  \langle \frac{1}{T} \int_{0}^T P_V(t) a_n P_V(t) \, d t \, \psi_j^{(n)} , \psi_j^{(n)}\rangle_{L^2(X_n)},
\end{equation}
where we have used $h(t,H_V) = P_V(t)$. From Lemma~\ref{LE:lower_bound_h(t,lambda)} we have the lower bound
\begin{equation*}
    \begin{aligned}
    \inf_{\lambda \in I_n} \frac{1}{T} \int_0^T h^2(t,\lambda_j^{(n)}) \, d t \geq \inf_{\lambda \in I'} \frac{1}{T} \int_0^T h^2(t,\lambda_j^{(n)}) \, d t \geq \frac{1}{3(b'-\frac14)},
    \end{aligned}
\end{equation*}
for all $T$ sufficiently large (depending on $a'$ and $b'$) and all $n\in\N$. We obtain using this lower bound for all $n\in\N$ that 
\begin{equation}
    \Big|\langle a_n \psi_j^{(n)} , \psi_j^{(n)}\rangle_{L^2(X_n)}\Big| \leq  3b'  \Big|\langle \frac{1}{T} \int_{0}^T P_V(t) a_n P_V(t) \, d t \, \psi_j^{(n)} , \psi_j^{(n)}\rangle_{L^2(X_n)}\Big|.
\end{equation}
Using this, we obtain the following upper bound
\begin{equation}
    \begin{aligned}
    \MoveEqLeft \sum_{j:\lambda_j^{(n)}\in I_n} \Big| \langle a_n \psi_j^{(n)} , \psi_j^{(n)}\rangle_{L^2(X_n)}  \Big|^2 
    \\
    \leq{}& 9(b')^2 \sum_{j:\lambda_j^{(n)}\in I_n} \Big| \langle \frac{1}{T} \int_{0}^T P_V(t) a_n P_V(t) \, d t \psi_j^{(n)} , \psi_j^{(n)}\rangle_{L^2(X_n)}  \Big|^2
    \\
    ={}&  \frac{9b'^2}{T^2}  \sum_{j:\lambda_j^{(n)}\in I_n} \Big| \langle  \int_{0}^T \Pi_{I_n}(H_V^{(n)}) P_V(t) a_n P_V(t) \Pi_{I_n}(H_V^{(n)}) \, d t \psi_j^{(n)} , \psi_j^{(n)}\rangle_{L^2(X_n)}  \Big|^2
    \\
    \leq {}& \frac{9b'^2}{T^2} \Big\lVert \int_{0}^T \Pi_{I_n}(H_V^{(n)}) P_V(t) a_n P_V(t) \Pi_{I_n}(H_V^{(n)}) \, d t \Big\rVert_{\bHS}^2,
    \end{aligned}
\end{equation}
where we can freely insert the spectral projections since $\lambda_j^{(n)}\in I_n$ for all $j$. Applying Lemma~\ref{LE:Main_HS_est} to the Hilbert-Schmidt norm we obtain the following bound 
\begin{equation}
    \begin{aligned}
       \MoveEqLeft \Big\lVert \int_{0}^T \Pi_{I_n}(H_V^{(n)}) P_V(t) a_n P_V(t) \Pi_{I_n}(H_V^{(n)}) \, d t \Big\rVert_{\bHS}^2 
       \\
       \leq{}& 2 g_\kappa(T) \norm{a_n}^2_{L^2(X_n)}+  16\pi T^3 \norm{a_n}_{L^\infty(X)}^2 \vol(X(\leq 2T))
       \\
       &+ \tilde{C}_I T^7  \max(1, \lVert V\rVert_{L^\infty(X)}^4)  \lVert a_n \rVert^2_{L^\infty(X)}\left( \frac{  e^{4T+\frac12} \Vol(X(\leq2T))}{r^2} + \norm{V}_{L^2(X)}^2 \right),
        \end{aligned}
    \end{equation}
where $g_\kappa(T)$ is given by
\begin{equation*}
    g_\kappa(T) = \begin{cases}
        \frac{\kappa 2^{4-\kappa} C_1}{(\kappa -1)(2-\kappa)} T^{3-\kappa} & \text{if $1<\kappa<2$,}\\
        4 C_1(T+1)\ln (T+1) & \text{if $\kappa\geq 2$.}
    \end{cases}        
\end{equation*}
We may assume that $g_{\kappa}(T)$ is independent of $n$ due to assumption \textbf{(MIX)}. The number $r$ is given by $r=\min(1,\min_{n\in\N} (\frac{\InjRad_{X_n}}{2}))$. That $r>0$ and is bounded away from zero uniformly in $n$ follow from assumption \textbf{(UND)}. The constant $\tilde{C}_{I}$ does not depend on $n$ since we have $I\subset I_n$ for all $n\in\N$. In addition, the inclusion $I\subset I_n$ also gives us that 
\begin{equation}
    \mathcal{N}\big(H_V^{(n)},I_n\big)  \geq \mathcal{N}\big(H_V^{(n)},I\big) \geq c\Vol(X_n)
\end{equation}
for all $n\in\N$, where the last inequality follows from Lemma~\ref{LE:Weyl_schrodinger}. It follows from the above estimates, our assumption on the sequence of potentials, assumption \textbf{(BSC)}, and that $\norm{a_n}_{L^2(X_n)}^2 \leq \norm{a_n}_{L^\infty(X_n)}^2\Vol(X_n) $ that
    \begin{equation}
    \limsup_{n\rightarrow \infty}\frac{1}{\mathcal{N}\big(H_V^{(n)},I_n\big)} \sum_{j:\lambda_j^{(n)}\in I_n} \Big| \langle a_n \psi_j^{(n)} , \psi_j^{(n)}\rangle_{L^2(X_n)}  \Big|^2 \leq  \frac{2 g_\kappa(T)}{cT^2} \norm{a}^2_{L^\infty(X)}.
    \end{equation}
Then finally, taking $T$ to infinity, the desired result follows by observing that $g_\kappa(T) = o(T^2)$. This concludes the proof for point (1).

We will only prove point (3) as it also covers point (2) with $\tau=0$. To prove point (3) we need to establish that for any uniformly bounded sequence of measurable functions $\{a_n\}_{n\in\N}$ it holds that for every $\varepsilon>0$ there exists $\delta(\varepsilon)>0$ such that for all $\tau\in\R$
\begin{equation*}
        \limsup_{n\rightarrow \infty}\frac{1}{\mathcal{N}\big(H_V^{(n)},I_n\big)}  \sum_{\substack{j\neq k: \lambda_j^{(n)},\lambda_k^{(n)}\in I_n  \\ |\rho_{j}^{(n)}-\rho_{k}^{(n)}-\tau| < \delta(\varepsilon)}} \Big| \langle a_n \psi_j^{(n)} , \psi_k^{(n)}\rangle_{L^2(X_n)}  \Big|^2 <\varepsilon.
\end{equation*}  

Let $\tau\in\R$ and $\varepsilon>0$ be given. Since $\{\psi_j^{(n)}\}_{j\in\N}$ is an orthonormal basis for all $n$, we can, without loss of generality, assume that $a_n$ has mean zero. Moreover, we fix a number $m\in(\frac14,a)$ and set $\widetilde{m} = \sqrt{m-\frac14}$. Then for $\delta \in (0,\frac29\widetilde{m})$ we let $T_\delta = \frac{\pi}{2\delta}$, at the end of the proof we fix $\delta$ depending on $\varepsilon$.  
Letting $h,\tilde{h}_\tau:\R_+\times\R\mapsto\R$ be given by 
\begin{equation*}
    h(t,\lambda) = \big(\lambda - \frac{1}{4}\big)^{-\frac12}\sin\big(t\sqrt{\lambda - \tfrac{1}{4}}\big) \qquad\text{and}\qquad \tilde{h}_\tau(t,\lambda) = \cos(\tau t) \big(\lambda - \frac{1}{4}\big)^{-\frac12}\sin\big(t\sqrt{\lambda - \tfrac{1}{4}}\big),
\end{equation*}    
We get for $\lambda_j^{(n)},\lambda_k^{(n)} \in I_n$ such that $|\rho_j^{(n)}-\rho_k^{(n)}-\tau|<\delta$
\begin{equation*}
      \langle a_n \psi_j^{(n)} , \psi_k^{(n)}\rangle_{L^2(X_n)} = \frac{1}{\frac{1}{T_\delta} \int_0^{T_\delta} \tilde{h}_\tau(t,\lambda_{k}^{(n)}) h(t,\lambda_{j}^{(n)}) \, d t }  \langle \frac{1}{T_\delta} \int_{0}^{T_\delta} \widetilde{P}_V(t,\tau) a_n P_V(t) \, d t \, \psi_j^{(n)} , \psi_k^{(n)}\rangle_{L^2(X_n)},  
\end{equation*}
where we have used $h(t,H_V) = P_V(t)$ and $ h_\tau(t,H_V) = \widetilde{P}_V(t,\tau)$. From Lemma~\ref{LE:lower_bound_tilde_h_and_h(t,lambda)} we have
\begin{equation*}
    \inf_{\lambda_j^{(n)},\lambda_k^{(n)} \in I'} \frac{1}{T_\delta} \Big| \int_0^{T_\delta} \tilde{h}_\tau(t,\lambda_{k}^{(n)}) h(t,\lambda_{j}^{(n)}) \, d t \Big|^{-1} < 8\pi\big(b'-\tfrac{1}{4}\big).
\end{equation*}
Since $I_n\subset I'$ for all $n\in\N$ we obtain from the above estimate for all $\lambda_j^{(n)},\lambda_k^{(n)} \in I_n$ such that $|\rho_j^{(n)}-\rho_k^{(n)}-\tau|<\delta$ the estimate
\begin{equation*}
      \big|\langle a_n \psi_j^{(n)} , \psi_k^{(n)}\rangle_{L^2(X_n)}\big|^2 \leq \frac{64\pi^2 b'^2}{T_\delta^2} \Big|  \langle \int_{0}^{T_\delta} \widetilde{P}_V(t,\tau) a_n P_V(t) \, d t \, \psi_j^{(n)} , \psi_k^{(n)}\rangle_{L^2(X_n)} \Big|^2.  
\end{equation*}
Using this estimate, we obtain with an analogous estimate to the one used in the proof of point (1) the following estimate
\begin{equation*}
     \sum_{\substack{j\neq k: \lambda_j^{(n)},\lambda_k^{(n)}\in I_n \\ |\rho_{j}^{(n)}-\rho_{k}^{(n)}-\tau| < \delta(\varepsilon)}} \Big| \langle a_n \psi_j^{(n)} , \psi_k^{(n)}\rangle_{L^2(X_n)}  \Big|^2 
    \lesssim \frac{b'^2}{T_\delta^2} \Big\lVert \int_{0}^{T_\delta} \Pi_{I_n}(H_V^{(n)})\widetilde{P}_V(t,\tau) a_n P_V(t) \Pi_{I_n}(H_V^{(n)}) \, d t \Big\rVert_{\bHS}^2.
\end{equation*}
Applying Lemma~\ref{LE:Main_HS_est} to the Hilbert-Schmidt norm on the right-hand side, we obtain the estimate  
\begin{equation*}
    \begin{aligned}
     \MoveEqLeft \frac{b'}{T_\delta^2} \Big\lVert \int_{0}^{T_\delta} \Pi_{I_n}(H_V^{(n)})\widetilde{P}_V(t,\tau) a_n P_V(t) \Pi_{I_n}(H_V^{(n)}) \, d t \Big\rVert_{\bHS}^2 
    \\
    \leq{}& \frac{C_1 g_\kappa(T_\delta)}{T_\delta^2}  \norm{a_n}^2_{L^2(X)}+  16\pi T_\delta^3 \norm{a_n}_{L^\infty(X)}^2 \vol(X(\leq 2T_\delta))
       \\
       &+ C_2 T_\delta^5  \max(1, \lVert V\rVert_{L^\infty(X)}^4)  \lVert a_n \rVert^2_{L^\infty(X)}\left( \frac{  e^{4T_\delta +\frac12} \Vol(X(\leq2T_\delta))}{r^2} + \norm{V}_{L^2(X)}^2 \right),
    \end{aligned}
\end{equation*}
where $C_1$ and $C_2$ depend only on $I, I'$. Now, again arguing as in the proof of point (1) this gives us the following
\begin{equation*}
    \limsup_{n\rightarrow \infty}\frac{1}{\mathcal{N}\big(H_V^{(n)},I_n\big)}  \sum_{\substack{j\neq k: \lambda_j^{(n)},\lambda_k^{(n)}\in I_n \\ |\rho_{j}^{(n)}-\rho_{k}^{(n)}-\tau| < \delta(\varepsilon)}} \Big| \langle a_n \psi_j^{(n)} , \psi_k^{(n)}\rangle_{L^2(X_n)}  \Big|^2 \leq \frac{C_1 g_\kappa(T_\delta)}{T_\delta^2} \norm{a}^2_{L^\infty(X)},
\end{equation*}
where the constant only depends on $I$ and $I'$ and $g_\kappa$ is the same function as above. Recalling that $T_\delta= \frac{\pi}{2\delta}$ and $g_\kappa(T_\delta)=o(T_\delta^2)$, we see that by choosing $\delta$ sufficiently small, we obtain the desired estimate. This concludes the proof.
\end{proof}

\section{Proof of probabilistic main result}\label{SEC:outline_probabilistic_thm}

We do this just for a single fixed window $I = [a,b]$, the same argument holds for a sequence of windows $I_g$ as in the proof of Theorem \ref{thm:main}.  In order to establish Theorem~\ref{Thm:propalistic_main}, we will need the probabilistic versions of \textbf{(BSC)}, \textbf{(UND)}, and \textbf{(MIX)}. These are collected in the following Theorem.

\begin{theorem}\label{thm:properties_surface_large_genus}
    Suppose that $X$ is a random hyperbolic surface with the distribution of $X$ given by the Weil-Petersson distribution on the moduli space $\mathcal{M}_g$. Denote by
    \begin{equation*}
        0=\lambda_0<\lambda_1\leq \lambda_2 \leq \cdots 
    \end{equation*}
    the eigenvalues of the Laplace-Beltrami operator $-\Delta_X$. Then with probability $1-o(1)$ for large genus the following properties hold:
    \begin{enumerate}
        \item 
        \begin{equation*}
            \InjRad_X \geq g^{-\frac{1}{24}} \log(g)^{\frac{9}{16}}.    
        \end{equation*}

        \item 
        \begin{equation*}
            \frac{\Vol(\{ x\in X\,|\, \InjRad_X(x) <\frac{1}{6}\log(g)\}) }{\Vol(X)}=\mathcal{O}\big(g^{-\frac{1}{4}}\big).    
        \end{equation*}

        \item For any $\eta\in(0,\frac{1}{4})$
        \begin{equation*}
            \lambda_1 \geq \frac{1}{4} - \eta.
        \end{equation*}

        \item For any compact interval $I\subset (\frac{1}{4},\infty)$
        \begin{equation*}
            \Bigg|\frac{\mathcal{N}(-\Delta_X,I)}{\Vol(X)} - \frac{1}{4\pi} \int_\R \boldsymbol{1}_I (\tfrac{1}{4} +r^2) \tanh(\pi r) r \, dr \Bigg|=  \mathcal{O} \bigg(\sqrt{\max(I) -\frac{1}{4}} - \sqrt{\min(I)-\frac{1}{4}} +\sqrt{\tfrac{\max(I) +1}{\log(g)}} \bigg).
        \end{equation*}
    \end{enumerate}
\end{theorem}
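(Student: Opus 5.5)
This theorem collects known results on Weil--Petersson random surfaces. I would prove it by citing the literature for each item and intersecting the corresponding high-probability events. A finite intersection of events, each of probability $1-o(1)$, again has probability $1-o(1)$. So it suffices to establish each of (1)--(4) separately with probability $1-o(1)$.

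For (1), I would use Mirzakhani's estimate that the Weil--Petersson probability of a closed geodesic of length at most $\epsilon$ is $O(\epsilon^2)$ uniformly in $g$. Choosing $\epsilon=g^{-1/24}\log(g)^{9/16}$ makes this probability $o(1)$. On the complement, the systole is at least $\epsilon$, so the global injectivity radius is at least $\epsilon/2$. The exact exponent is then absorbed into the constant, or the statement as quoted from Monk's thesis is used directly.

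For (2), I would invoke Monk's quantitative Benjamini--Schramm convergence result. It states that with probability $1-O(g^{-c})$, the $\frac16\log g$-thin part has relative volume $O(g^{-1/4})$. Its proof counts short geodesic loops through Mirzakhani's integration formula and then converts loop counts into the volume of the thin part.

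For (3), I would cite the recent optimal spectral gap theorem of Anantharaman--Monk, which gives $\lambda_1\geq\frac14-\eta$ with probability $1-o(1)$ for each fixed $\eta$. Earlier partial results, due to Wu--Xue and Lipnowski--Wright with $\frac{3}{16}-\eta$, would already suffice to verify \textbf{(MIX)} through Theorem~\ref{Thm: Exponential mixing}.

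For (4), I would use Monk's quantitative Weyl law. It combines the Selberg trace formula, with a test function adapted to $I$, with the thin-part estimate from (2). This gives an error of $\sqrt{(\max(I)+1)/\log g}$ coming from the length scale $\frac16\log g$, plus a window-width term.

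The main obstacle is not a new argument but making sure the cited statements match the exact forms and exponents written here, especially in (1) and (4). There I would state precisely which reference supplies each bound, rather than rederive them. If any exponent did not match, I would weaken the statement to the form available in the literature, since only $\InjRad_X\to$ bounded below, relative thin volume $\to0$, a uniform spectral gap, and a lower Weyl bound are needed later.
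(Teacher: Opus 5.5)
Your proposal is correct and takes the same route as the paper, which also proves this theorem purely by citation: (1) and (2) come from Monk, (3) from Anantharaman--Monk and Hide--Macera--Thomas (the paper notes that weaker gaps would suffice), and (4) from a Monk-based argument as in Le Masson--Sahlsten's Theorem 6.2, with the finitely many high-probability events then intersected. Your alternative for (1), via Mirzakhani's uniform $O(\epsilon^2)$ systole bound, also works provided you apply it at threshold $2\epsilon$ (since $\InjRad_X=\frac12\,\mathrm{sys}(X)$); this only changes the failure probability by a factor of $4$.
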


Properties (1) and (2) in the above theorem are taken from \cite{monkBenjaminiSchrammConvergence2022}, property (4) is obtained similar to \cite[Theorem 6.2]{lemassonQuantumErgodicityEisenstein2024} using the results in \cite{monkBenjaminiSchrammConvergence2022}, and property (3) is taken from \cite{AnantharamanMonk2025,HideMaceraThomas2025b}, although up to changing the constants, a weaker result would suffice for our purposes (e.g. \cite{mirzakhaniGrowthWeilPeterssonVolumes2013} or \cite{WuXue}). The first three properties are those that replace assumptions \textbf{(BSC)}, \textbf{(UND)}, and \textbf{(MIX)} in this setting using the work of Ratner \cite{Ratner}. Property (4) we will use to establish a lower bound on the number of eigenvalues in an interval $I$ in this setting. This is the content of the following lemma. 

\begin{lemma}\label{LE:Weyl_schrodinger_proba}
Let $X$ be a random hyperbolic surface with the distribution of $X$ given by the Weil-Petersson distribution in the moduli space $\mathcal{M}_g$. Suppose that $V\in L^\infty(X)$ and consider the Schr\"odinger operator $H_V = -\Delta_X+V$. Then with probability $1-o(1)$ for large genus For any compact interval $I = [a,b]\subset (\frac{1}{4},\infty)$, such that $b-\max(V)>a  -\min(0, \min(V))$, there exist a positive constant $c$ and a natural number $N\in\N$ such that 
\begin{equation}
    \mathcal{N}(H_{V_n},I) \geq \Bigg( c - \mathcal{O} \bigg(\sqrt{\max(\tilde{I}) -\frac{1}{4}} - \sqrt{\min(\tilde{I})-\frac{1}{4}} +\sqrt{\tfrac{\max(\tilde{I}) +1}{\log(g)}} \bigg) \Bigg) \Vol(X). 
\end{equation}
where $\tilde{I}=[a-\min(0,\min(V),b- \max(V)]$.
\end{lemma}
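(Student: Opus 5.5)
The argument copies the proof of Lemma~\ref{LE:Weyl_schrodinger}, replacing the deterministic Weyl law of Lemma~\ref{LE:Weyl_laplace} by the quantitative one in property (4) of Theorem~\ref{thm:properties_surface_large_genus}.

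Set $\widetilde{C}_{\min}=\min(0,\min(V))$ and $C_{\max}=\max(V)$. Since $V$ is bounded, the quadratic-form inequalities
\[
-\Delta_X+\widetilde{C}_{\min}\le H_V\le -\Delta_X+C_{\max}
\]
hold. These are deterministic and hold for every surface, not only with high probability.

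The min-max theorem then gives monotonicity of the counting functions $\Tr[\boldsymbol{1}_{(-\infty,\alpha)}(\cdot)]$, and also of $\Tr[\boldsymbol{1}_{(-\infty,\alpha]}(\cdot)]$. Writing $\Tr[\boldsymbol{1}_{[a,b]}(H_V)]$ as the difference of the counts up to $b$ and strictly below $a$, exactly as in Lemma~\ref{LE:Weyl_schrodinger}, gives
\[
\mathcal N(H_V,I)\ \ge\ \mathcal N(-\Delta_X,\tilde I),\qquad \tilde I=[a-\widetilde C_{\min},\,b-C_{\max}].
\]
The hypothesis $b-\max(V)>a-\min(0,\min(V))$ makes $\tilde I$ a nondegenerate compact interval. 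Since $\widetilde C_{\min}\le 0$ and $a>\frac14$, we also have $\tilde I\subset(\frac14,\infty)$.

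Next, intersect with the event of probability $1-o(1)$ on which property (4) of Theorem~\ref{thm:properties_surface_large_genus} holds. Applying property (4) to the fixed interval $\tilde I$ yields
\[
\frac{\mathcal N(-\Delta_X,\tilde I)}{\Vol(X)}\ \ge\ c-\mathcal O\Big(\sqrt{\max(\tilde I)-\tfrac14}-\sqrt{\min(\tilde I)-\tfrac14}+\sqrt{\tfrac{\max(\tilde I)+1}{\log g}}\Big),
\]
where
\[
c=\frac1{4\pi}\int_\R\boldsymbol 1_{\tilde I}(\tfrac14+r^2)\tanh(\pi r)\,r\,dr>0.
\]
The constant $c$ is positive because $\tilde I$ has nonempty interior inside $(\frac14,\infty)$, where the integrand is positive. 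Combining this with the previous display proves the claim.

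The only point requiring care is the probabilistic quantifier. The statement says "for any compact interval", but property (4) is a high-probability statement for a fixed interval. The interval $\tilde I$ depends on $V$, and $V$ may depend on $X$. I would resolve this as follows. Either $\tilde I$ is treated as fixed, since $\min V$ and $\max V$ are controlled by the deterministic constants $C_{\min}$ and $C_{\max}$ from \textbf{(POT)}, so that one may pass to $[a-\widetilde C_{\min},\,b-C_{\max}]$ using those constants. Or one reads the $\mathcal O$ in property (4) as uniform over intervals contained in a fixed compact subset of $(\frac14,\infty)$; this is how such bounds arise from the trace-formula argument of \cite{lemassonQuantumErgodicityEisenstein2024}. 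The constant $N$ plays no real role and can be taken to be the threshold beyond which the high-probability event is defined.
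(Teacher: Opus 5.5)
Your proposal is correct and matches the paper's proof: the min-max comparison $\mathcal N(H_V,I)\ge\mathcal N(-\Delta_X,\tilde I)$ as in Lemma~\ref{LE:Weyl_schrodinger}, followed by property (4) of Theorem~\ref{thm:properties_surface_large_genus} applied to $\tilde I$. Your remark on the order of quantifiers addresses a point the paper's two-line proof leaves implicit; either fix $\tilde I$ using the \textbf{(POT)} constants or use a Weyl bound that is uniform over intervals in a compact subset of $(\frac14,\infty)$.
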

\begin{proof}
 By arguing as in the proof Lemma~\ref{LE:Weyl_schrodinger} we obtain the following inequality
 \begin{equation*}
    \mathcal{N}(H_{V_n},I) \geq \mathcal{N}(-\Delta_X,\tilde{I}),
\end{equation*}
where $\tilde{I} = [a-\min(0,\min(V),b- \max(V)]$. Then using Theorem~\ref{thm:properties_surface_large_genus} point $(4)$, we obtain the stated estimate, which concludes the proof. 
\end{proof}

With this established, we can now give an outline for the proof of Theorem~\ref{Thm:propalistic_main} that provides details on how to modify the arguments used in \cite{lemassonQuantumErgodicityEisenstein2024} to include potentials.  Moreover, we will only give this outline of property (1) as the proof of properties (2) and (3) is obtained by analogous arguments.

\begin{proof}[Proof of Theorem~\ref{Thm:propalistic_main}]
We just need to find a sequence $\eps_g \to 0$ such that the probability of the event that a surface $X \in \cM_g$ satisfies for any $V$ and orthognormal basis $\psi_j^{(g)}$ on $X$:
\begin{equation*}
\frac{1}{\mathcal{N}\big(H_{V_g}^{(g)},I_g\big)} \sum_{j:\lambda_j^{(g)}\in I_g} \Big| \langle a_g \psi_j^{(g)} , \psi_j^{(g)}\rangle_{L^2(X_g)} - \frac{1}{\Vol(X_g)} \int_{X_g} a_g(x)  \, dx \Big|^2 \leq \eps_g,
\end{equation*}
converges to $1$ as $g \to \infty$. We may assume without the loss of generality that the average of $a_g$ is zero for all $g\in\N$. By arguing as in the proof of Theorem~\ref{thm:main} part (1) we obtain the following estimate
\begin{equation*}
    \begin{aligned}
        \MoveEqLeft \frac{1}{\mathcal{N}\big(H_{V_g}^{(g)},I_g\big)} \sum_{j:\lambda_j^{(g)}\in I_g} \Big| \langle a_g \psi_j^{(g)} , \psi_j^{(g)}\rangle_{L^2(X_g)}\Big|^2 
        \\
        \leq{}& \frac{1}{\mathcal{N}\big(H_{V_g}^{(g)},I_g\big)} \Bigg(  \frac{C}{\beta_1^3 T} \norm{a_g}^2_{L^2(X_g)}+  16\pi T \norm{a_g}_{L^\infty(X)}^2 \vol(X_g(\leq 2T))
       \\
       &+ \tilde{C}_I T^5  \max(1, \lVert V\rVert_{L^\infty(X_g)}^4)  \lVert a_g \rVert^2_{L^\infty(X_g)}\left( \frac{  e^{4T+\frac12} \Vol(X_g(\leq2T))}{r^2} + \norm{V}_{L^2(X_g)}^2 \right)\Bigg)
    \end{aligned}
\end{equation*}
where $r\leq \min(1,\frac{\InjRad_{X_g}}{2})$ and $\beta_1 = 1-\sqrt{1-4\lambda_1^{(g)}}$. Moreover, instead of applying Lemma~\ref{LE:int_est_after_exp_mix} we use \cite[Lemma~{6.6}]{Hippi} to obtain the factor $\frac{C}{\beta_1^3 T}$ instead of $g_\kappa(T)$ as in the previous proof. By applying Theorem~\ref{thm:properties_surface_large_genus} and Lemma~\ref{LE:Weyl_schrodinger_proba} we obtain with high probability and for large genus the upper bound 
\begin{equation*}
    \begin{aligned}
         \frac{1}{\mathcal{N}\big(H_{V_g}^{(g)},I_g\big)} \sum_{j:\lambda_j^{(g)}\in I} \Big| \langle a_g \psi_j^{(g)} , \psi_j^{(g)}\rangle_{L^2(X_g)}\Big|^2 
        \leq{}& C_g \Bigg[  \frac{C_1\norm{a_g}^2_{L^\infty(X_g)}}{(1-2\sqrt{\eta})^3 T } +  \frac{C_2 T\norm{a_g}_{L^\infty(X)}^2 \vol(X_g(\leq 2T))}{\Vol(X_g)}
       \\
       & + \tilde{C}_{I} \left( \frac{ T^5  e^{4T+\frac12} g^{\frac{1}{12}} \Vol(X_g(\leq2T))}{\log(g)^{\frac{9}{8}} \Vol(X_g) } + \frac{T^5\norm{V}_{L^2(X_g)}^2}{\Vol(X_g)} \right)\Bigg],
    \end{aligned}
\end{equation*}
where the constants will contain the missing terms from the previous estimate with the exception of $C_g$. The constant $C_g$ will satisfy that 
\begin{equation*}
    C_g^{-1} =  c- \mathcal{O}\bigg(\sqrt{\max(\tilde{I}) -\frac{1}{4}}\sqrt{\min(\tilde{I})-\frac{1}{4}} +\sqrt{\tfrac{\max(\tilde{I}) +1}{\log(g)}} \bigg)
\end{equation*}
where $\tilde{I} = [a-\min(0,\min(V),b- \max(V)]$ and $c$ is some positive constant. Now by choosing 
\begin{equation*}
    T = \min\Bigg( \bigg(\frac{\norm{V}_{L^2(X_g)}^2}{\Vol(X_g)}\bigg)^{-1/10},  c\log(g) \Bigg)
\end{equation*} 
for some small $c \in (0, 1/6)$ and then taking genus to infinity we see that we obtain the desired results with high probability for a suitable $\eps_g \to 0$ due to the properties (1)-(4) of Theorem \ref{thm:properties_surface_large_genus}, see \cite{lemassonQuantumErgodicityEisenstein2024} for a more quantitative deduction in the zero potential case.  As mentioned before the outline of proof, the proof of part (2) and (3) are analogous to the one given above.
\end{proof}

\bibliographystyle{plain}
\bibliography{Mathematics}

\end{document}